\documentclass[a4paper,reqno,11pt]{amsart}
\usepackage{amsmath, amsfonts, amssymb, amsthm, amscd}
\usepackage{graphicx}
\usepackage{psfrag}
\usepackage{perpage}
\usepackage{url}
\usepackage{color}
\usepackage{mathrsfs}
\usepackage{mathabx}
\usepackage{scalerel} 
\usepackage{tikz,tikz-cd}\usepackage{caption,subcaption}
\usetikzlibrary{arrows,decorations.pathmorphing,backgrounds,positioning,fit,petri}
\usepackage{subdepth}

\usepackage{dsfont} 

\usepackage[utf8]{inputenc}
\usepackage[T1]{fontenc}
\usepackage{microtype}
\usepackage{mathtools}
\usepackage[a4paper,scale={0.77,0.74},marginratio={1:1},footskip=7mm,headsep=10mm]{geometry}

\usepackage[linktocpage]{hyperref}
\definecolor{dukeblue}{rgb}{0.0, 0.0, 0.61}

\hypersetup{
	colorlinks=true, linkcolor=dukeblue,
	citecolor=dukeblue
}

\makeatletter
\def\@secnumfont{\bfseries}

\def\section{\@startsection{section}{1}%
  \z@{.7\linespacing\@plus\linespacing}{.5\linespacing}%
  {\normalfont\large\bfseries\centering}}

\def\subsection{\@startsection{subsection}{2}%
  \z@{.5\linespacing\@plus.7\linespacing}{-.5em}%
  {\normalfont\bfseries}}

\def\subsubsection{\@startsection{subsubsection}{3}%
  \z@{.5\linespacing\@plus.7\linespacing}{-.5em}%
  {\normalfont}}

\def\specialsection{\@startsection{section}{1}%
  \z@{\linespacing\@plus\linespacing}{.5\linespacing}%
  {\normalfont\centering\large\bfseries}}
\makeatother

\makeatletter

\renewenvironment{proof}[1][\proofname]{\par
\pushQED{\qed}%
\normalfont \topsep4\p@\@plus4\p@\relax
\trivlist
\item[\hskip\labelsep
\bfseries
#1\@addpunct{.}]\ignorespaces
}{%
\popQED\endtrivlist\@endpefalse
}
\makeatother

\setcounter{tocdepth}{3}

\makeatletter
\newcommand \Dotfill {\leavevmode \leaders \hb@xt@ 6pt{\hss .\hss }\hfill \kern \z@}
\makeatother

\makeatletter
\def\@tocline#1#2#3#4#5#6#7{\relax
  \ifnum #1>\c@tocdepth 
  \else
    \par \addpenalty\@secpenalty\addvspace{#2}%
    \begingroup \hyphenpenalty\@M
    \@ifempty{#4}{%
      \@tempdima\csname r@tocindent\number#1\endcsname\relax
    }{%
      \@tempdima#4\relax
    }%
    \parindent\z@ \leftskip#3\relax \advance\leftskip\@tempdima\relax
    \rightskip\@pnumwidth plus4em \parfillskip-\@pnumwidth
    #5\leavevmode\hskip-\@tempdima
      \ifcase #1
       \or\or \hskip 1.65em \or \hskip 3.3em \else \hskip 4.95em \fi%
      #6\nobreak\relax
    \Dotfill
    \hbox to\@pnumwidth{\@tocpagenum{#7}}\par
    \nobreak
    \endgroup
  \fi}
\makeatother

\makeatletter
\def\l@section{\@tocline{1}{0pt}{1pc}{}{}}
\renewcommand{\tocsection}[3]{%
\indentlabel{\@ifnotempty{#2}{\ignorespaces#1 #2.\hskip 0.7em}}#3}
\def\l@subsection{\@tocline{2}{0pt}{1pc}{5pc}{}}

\def\l@subsubsection{\@tocline{3}{0pt}{1pc}{7pc}{}}

\makeatother

\setcounter{secnumdepth}{3}

\frenchspacing

\numberwithin{equation}{section}


\newtheoremstyle{mytheorem}{.7\linespacing\@plus.3\linespacing}{.7\linespacing\@plus.3\linespacing}%
     {\itshape}
     {}
     {\bfseries}
     {. }
     {0.3ex}
     {\thmname{{\bfseries #1}}\thmnumber{ {\bfseries #2}}\thmnote{ (#3)}}  

\theoremstyle{mytheorem}

\newtheorem{theorem}{Theorem}[section]
\newtheorem{lemma}[theorem]{Lemma}
\newtheorem{proposition}[theorem]{Proposition}


\newcommand{\bbE}{{\ensuremath{\mathbb E}} }



\newcommand\overlap{
\begin{tikzpicture}[scale=0.2]
\foreach \i in {0,1,...,4}{
	\draw [fill] (2*\i,0)--(2*\i,0) circle [radius=0.2];
	}
\draw [thick] (0,0)  to [out=45,in=135]  (2,0) to [out=45,in=135]  (4,0) to [out=45,in=135]  (6,0) to [out=45,in=135]  (8,0) ;
\draw [thick, red] (0,0)  to [out=-45,in=-135]  (2,0) to [out=-45,in=-135]  (4,0)  to [out=-45,in=-135]  (6,0) to [out=-45,in=-135]  (8,0);
\end{tikzpicture}
}

\newcommand\overlapbr{
\begin{tikzpicture}[scale=0.2]
\foreach \i in {0,1,...,4}{
	\draw [fill] (2*\i,0)--(2*\i,0) circle [radius=0.2];
	}
\draw [thick, blue] (0,0)  to [out=45,in=135]  (2,0) to [out=45,in=135]  (4,0) to [out=45,in=135]  (6,0) to [out=45,in=135]  (8,0) ;
\draw [thick, red] (0,0)  to [out=-45,in=-135]  (2,0) to [out=-45,in=-135]  (4,0)  to [out=-45,in=-135]  (6,0) to [out=-45,in=-135]  (8,0);
\end{tikzpicture}
}

\newcommand\wiggle{
\begin{tikzpicture}[scale=0.2]
 \draw [fill] (4.2, 5.5)  circle [radius=0.08];  \draw [fill] (5.8, 5.6)  circle [radius=0.08];
\draw[thick, dashed] (4.2, 5.5) to [out=20,in=180] (4.4,5.8) to [out=-30,in=120] (4.8,5.2) to [out=0, in=180] (5.8, 5.6);
\end{tikzpicture}
}


\newcommand{\cA}{{\ensuremath{\mathcal A}} }

\newcommand{\cI}{{\ensuremath{\mathcal I}} }





\DeclareMathSymbol{\leqslant}{\mathalpha}{AMSa}{"36} 
\DeclareMathSymbol{\geqslant}{\mathalpha}{AMSa}{"3E} 
\DeclareMathSymbol{\eset}{\mathalpha}{AMSb}{"3F}     

\newcommand{\sumtwo}[2]{\sum_{\substack{#1 \\ #2}}} 
\newcommand{\sumthree}[3]{\sum_{\substack{#1 \\ #2 \\ #3}}} 
\newcommand{\prodtwo}[2]{\prod_{\substack{#1 \\ #2}}}     


\newcommand{\R}{\mathbb{R}}

\newcommand{\Z}{\mathbb{Z}}
\newcommand{\N}{\mathbb{N}}

\newcommand{\PEfont}{\mathrm}

\newcommand{\p}{\ensuremath{\PEfont P}}

\newcommand{\E}{\ensuremath{\PEfont  E}}
\renewcommand{\P}{\p}

\newcommand{\ind}{\mathds{1}}

\renewcommand{\epsilon}{\varepsilon}
\renewcommand{\theta}{\vartheta}
\renewcommand{\rho}{\varrho}


\newenvironment{myenumerate}{%
\renewcommand{\theenumi}{\arabic{enumi}}%
\renewcommand{\labelenumi}{{\rm(\theenumi)}}%
\begin{list}{\labelenumi}
	{%
	\setlength{\itemsep}{0.4em}%
	\setlength{\topsep}{0.5em}%
	\setlength\leftmargin{2.45em}%
	\setlength\labelwidth{2.05em}%
	\setlength{\labelsep}{0.4em}%
	\usecounter{enumi}%
	}%
	}%
{\end{list}
}

{\end{list}
}

{\end{list}
}

\renewenvironment{enumerate}{
\begin{myenumerate}}%
{\end{myenumerate}}

\newenvironment{myitemize}{%
\begin{list}{$\bullet$}%
 	{%
	\setlength{\itemsep}{0.4em}%
	\setlength{\topsep}{0.5em}%
	\setlength\leftmargin{2.65em}%
	\setlength\labelwidth{2.65em}%
	\setlength{\labelsep}{0.4em}%
	}%
	}%
{\end{list}}

{\end{myitemize}}


\MakePerPage[2]{footnote} 



\date{\today}

\newcommand\sfC{\mathsf C}

\newcommand\sfL{\mathsf L}

\newcommand\sfP{\mathsf P}
\newcommand\sfQ{\mathsf Q}

\newcommand\sfT{\mathsf T}
\newcommand\sfU{\mathsf U}

\newcommand\sfa{\mathsf a}

\newcommand\sfp{\mathsf p}

\newcommand\bx{\boldsymbol{x}}
\newcommand\by{\boldsymbol{y}}
\newcommand\bz{\boldsymbol{z}}

\definecolor{cadmiumgreen}{rgb}{0.0, 0.42, 0.24}
\definecolor{red(munsell)}{rgb}{0.95, 0.0, 0.24}

\newenvironment{customthm}[1]
  {\innercustomthm}
  {\endinnercustomthm}

\newcommand{\norm}[1]{\left\lVert#1\right\rVert} 
\newcommand{\ms}{\scriptscriptstyle}
\usepackage{mathtools}
\DeclarePairedDelimiter{\floor}{\lfloor}{\rfloor}

\author{Dimitris Lygkonis, Nikos Zygouras}
\address{Department of Mathematics,
    University of Warwick,
    Coventry CV4 7AL, UK}
\email{dimitris.ligonis@gmail.com, n.zygouras@warwick.ac.uk}

\title{A multivariate extension of the Erd\H{o}s-Taylor theorem }

\begin{document}
\begin{abstract}
  The Erd\H{o}s-Taylor theorem [Acta Math. Acad. Sci. Hungar, 1960] 
  states that if $\sfL_N$ is the local time at zero, up to time $2N$, of a two-dimensional simple,
symmetric   random walk, then
  $\tfrac{\pi}{\log N} \,\sfL_N$ converges in distribution to an exponential random variable with parameter one. This can be equivalently stated in
  terms of the total collision time of two independent simple random walks on the plane. More precisely, if
  $\sfL_N^{(1,2)}=\sum_{n=1}^N \ind_{\{S_n^{(1)}= S_n^{(2)}\}}$, then  $\tfrac{\pi}{\log N}\, \sfL^{(1,2)}_N$ converges in distribution to an exponential random variable
  of parameter one.
  We prove that for every $h \geq 3$, the family $ \big\{ \frac{\pi}{\log N}  \,\sfL_N^{(i,j)} \big\}_{1\leq i<j\leq h}$, of
  logarithmically rescaled, two-body collision local times between $h$ independent simple, symmetric
  random walks on the plane converges jointly to a vector of independent exponential random variables with parameter one, thus providing a multivariate
  version of the Erd\H{o}s-Taylor theorem. We also discuss connections to directed polymers in random environments. 
\end{abstract}

\keywords{planar random walk collisions, Erd\H{o}s-Taylor theorem, Schr\"odinger operators with point interactions, directed polymer in random environment}
\subjclass[2010]{82B44, 60G50, 60H15, 82D60, 47D08}
\maketitle

\tableofcontents
\section{Introduction.}
Let $S^{(1)},\dots,S^{(h)}$ be independent, simple, symmetric random walks on $\Z^2$ starting at the origin. We will use $\P_x$ and $\E_x$ to denote the probability and expectation with respect to the law of the simple random walk when starting from $x \in \Z^2$ and we will omit the subscripts when the walk starts from $0$. For $1\leq i <j \leq h$ we define the {\em collision local time} between $S^{(i)}$ and $S^{(j)}$ up to time $N$ by
\begin{equation*} 
  \sfL_N^{(i,j)}:=\sum_{n=1}^N \ind_{\{S_n^{(i)}=S_n^{(j)}\}} \, .
\end{equation*}
Notice that given $1\leq i<j\leq h$, $\sfL_N^{(i,j)}$ has the same law as the number of returns to zero,  before time $2N$,
for a single simple, symmetric random walk $S$ on $\Z^2$, that is $ \sfL_N^{(i,j)}\stackrel{\text{law}}{=}\sfL_N:=\sum_{n=1}^N \ind_{\{S_{2n}=0\}}$. This equality is a consequence of the
independence of $S^{(i)}$, $S^{(j)}$ and the symmetry of the simple random walk. A first moment calculation shows that
\begin{equation} \label{first_moment_normalisation}
  R_N := \E\big[\sfL_N\big] = \sum_{n=1}^N \P(S_{2n}=0) \stackrel{N \to \infty}{\approx} \frac{\log N}{\pi} \, ,
\end{equation}
see Section \ref{Preliminaries} for more details. It was established by Erd\H{o}s and Taylor, $60$ years ago \cite{ET60}, that under normalisation \eqref{first_moment_normalisation},  $\sfL_N$ satisfies the following limit theorem.
\begin{customthm}{A}[\cite{ET60}] \label{ET60_theorem}
  Let $\sfL_N:=\sum_{n=1}^N \ind_{\{S_{2n}=0\}}$ be the local time at zero, up to time $2N$, of a two-dimensional, simple, symmetric random walk $(S_n)_{n\geq 1}$
  starting at $0$. Then, as $N \to \infty$,
  \begin{equation*}
    \frac{\pi}{\log N}\, \sfL_N \xrightarrow{(d)} Y \, ,
  \end{equation*}
  where $Y$ has an exponential distribution with parameter $1$.
\end{customthm}

Theorem \ref{ET60_theorem} was recently generalised in \cite{LZ21}. In particular,
\begin{customthm}{B}[\cite{LZ21}] \label{LZ21_theorem}
  Let $h \in \N$ with $h \geq 2$ and $S^{(1)},...,S^{(h)}$ be $h$ independent two-dimensional, simple random walks starting all at zero.
  Then, for $\beta$ such that $|\beta| \in (0,1)$, it holds that the total collision time $\sum_{1\leq i<j\leq h}   \sfL_N^{(i,j)}$ satisfies
  \begin{equation*} 
    \E^{\otimes h}\bigg[ e^{ \frac{\pi\, \beta}{\log N}  \sum_{1\leq i<j\leq h}   \sfL_N^{(i,j)}} \bigg]
    \xrightarrow[N \to \infty ]{}
    \bigg(\frac{1}{1-\beta}\bigg)^{\frac{h(h-1)}{2}} \, ,
  \end{equation*}
  and, consequently,
  \begin{equation*}
    \frac{\pi}{\log N} \sum_{1\leq i<j\leq h}   \sfL_N^{(i,j)} \xrightarrow[N \to \infty]{(d)} \Gamma\big(\tfrac{h(h-1)}{2},1\big) \, ,
  \end{equation*}
  where $ \Gamma\big(\tfrac{h(h-1)}{2},1\big)$ denotes a Gamma variable, which has a density $\Gamma(h(h-1)/2)^{-1} x^{\tfrac{h(h-1)}{2}-1} e^{-x}$;
  $\Gamma(\cdot)$, in the expression of the density,  denotes the Gamma function.
\end{customthm}
Given the fact that a gamma distribution $\Gamma(k,1)$, with parameter $k\geq 1$, arises as the distribution of the sum of $k$ independent
random variables each one distributed according to an exponential random variable with parameter one (denoted as ${\rm Exp}(1)$), Theorem \ref{LZ21_theorem} raises the
question as to whether the joint distribution of the individual rescaled collision times
$\Big\{\frac{\pi}{\log N} \,  \sfL_N^{(i,j)}\Big\}_{1\leq i<j \leq h}$ converges to that of a
family of independent $\text{Exp}(1)$ random variables.
This is what we prove in this work. In particular,
\begin{theorem} \label{main_result}
  Let $h \in \N$ with $h\geq 2$ and $\boldsymbol{\beta}:=\{\beta_{i,j}\}_{1\leq i<j \leq h} \in \R^{\frac{h(h-1)}{2}}$ with 
  $|\beta_{i,j}| <1$ for all $1\leq i<j\leq h$. Then we have that
  \begin{equation}\label{thmeq:laplace2}
    \E^{\otimes h}\bigg[ e^{ \frac{\pi}{\log N}  \sum_{1\leq i<j\leq h}  \beta_{i,j} \sfL_N^{(i,j)}} \bigg]
    \xrightarrow[N \to \infty ]{}
    \prod_{1\leq i<j \leq h} \frac{1}{1-\beta_{i,j}} \,
  \end{equation}
  and, consequently,
  \begin{equation}\label{thmeq:joint}
    \Big\{ \tfrac{\pi}{\log N}  \,\sfL_N^{(i,j)} \Big\}_{1\leq i<j\leq h} \xrightarrow[N \to \infty]{(d)} \big\{ Y^{(i,j)} \big\}_{1\leq i<j\leq h} \, ,
  \end{equation}
  where $\big\{ Y^{(i,j)} \big\}_{1\leq i<j\leq h}$ are independent and identically distributed random variables following an $\text{Exp}(1)$ distribution.
\end{theorem}
An intuitive way to understand the convergence of the individual collision times, or equivalently of the local time of a planar walk, to
an exponential variable is the following. By \eqref{first_moment_normalisation}, the number of visits to zero of a planar walk,
which starts at zero, is $O(\log N)$ and, thus, much smaller than the time horizon $2N$. Typically, also, these visits happen within a short time,
much smaller than $2N$, so that every time the random walk is back at zero, the probability that it will return there again before time $2N$ is
not essentially altered. This results in the local time $\sfL_N$ being asymptotic to a geometric random variable with parameter
of order $(\log N)^{-1}$ (as also manifested by \eqref{first_moment_normalisation}),
which when rescaled suitably converges to an exponential random variable.

The fact that the joint distribution of $ \Big\{ \tfrac{\pi}{\log N}  \,\sfL_N^{(i,j)} \Big\}_{1\leq i<j\leq h} $ converges to that
of {\it independent } exponentials is much less apparent as the collision times have obvious correlations. A way to understand this
is, again, through the fact that collisions happen at time scales much shorter than the time horizon $N$ and, thus, it is
not really possible to actually distinguish which pairs collide when collisions take place. 
More crucially, the logarithmic scaling, as indicated via \eqref{first_moment_normalisation},
introduces a separation of scales between collisions of different pairs of walks, which is what, essentially, 
leads to the asymptotic factorisation of the Laplace transform \eqref{thmeq:laplace2}.  
This intuition is reflected in the two main steps of our proof,
which are carried out in Sections \ref{step_3} and \ref{step_5}.

Even though the Erd\H{o}s-Taylor theorem appeared a long time ago, the multivariate extension that we establish here appears to be
new. In \cite{GS09} it was shown that the law of $\tfrac{\pi}{\log N}\, \sfL^{(1,2)}_N$, conditioned on $S^{(1)}$,  converges a.s.
to that of an ${\rm Exp}(1)$ random variable. This implies that $\big\{ \tfrac{\pi}{\log N} \,\sfL^{(1,i)}_N\big\}_{1<i\leq h}$ converge
 to independent exponentials. However, it does not address the full independence of the
family of {\it all} pairwise collisions $\big\{ \tfrac{\pi}{\log N} \,\sfL^{(i,j)}_N\big\}_{1\leq i<j\leq h}$.

 In the continuum, phenomena of independence in functionals of planar Brownian motions have appeared in works around {\it log-scaling laws}
see \cite{PY86} (where the term {\it log-scaling laws} was introduced) as well as \cite{Y91} and \cite{Kn93}. These
works are mostly concerned with the problem of identifying the limiting distribution of {\it windings} of a planar Brownian motion
around a number of points $z_1,...,z_k$, different than the starting point of the Brownian motion, or the winding around the origin
of the differences $B^{(i)}-B^{(j)}$ between $k$ independent Brownian motions $B^{(1)},...,B^{(k)}$, starting all from different points,
which are also different than zero. Without getting into details, we mention that the results of \cite{PY86, Y91, Kn93} establish
that the windings (as well as some other functionals that fall within the class of {\it log-scaling laws})
converge, when logarithmically scaled, to independent Cauchy variables. \cite{Kn93} outlines a proof that the
local times of the differences $B^{(i)}-B^{(j)}, 1\leq i<j\leq k$, on the unit circle $\{z\in\R^2 \colon |z|=1\}$ converge, jointly,
to independent exponentials ${\rm Exp}(1)$, when logarithmically scaled, in a fashion similar to the scaling of Theorem \ref{main_result}.
The methods employed in the above works rely heavily on continuous techniques (It\^o calculus, time changes etc.), which
do not have discrete counterparts. In fact,  the passage from continuous to discrete is not straightforward either at a
technical level (see e.g. the discussion on page 41 of \cite{Kn93} and \cite{Kn94}) or at a phenomenological level
(see e.g. discussion on page 736 of \cite{PY86}).

The approach we follow towards Theorem \ref{main_result} starts with
expanding the joint Laplace transform in the form of {\it chaos series}, which  take the form of Feynman-type diagrams.
To control (and simplify) these diagrams, we start by inputing a renewal representation as well as a functional analytic framework.
The renewal theoretic framework was originally introduced in \cite{CSZ19a} in the context of scaling limits of random polymers
(we will come back to the connection with polymers later on) and it captures the stream of collisions within a single pair of walks. The
functional analytic framework can be traced back to works on spectral theory of {\it delta-Bose gases} \cite{DFT94, DR04}
and was also recently used in works on random polymers \cite{GQT21, CSZ21, LZ21}.
The core of this framework is to establish operator norm bounds for the total Green's functions of a set of planar random walks conditioned 
on a subset of them starting at the same location and on another subset of them ending up at the same location. 
Roughly speaking, the significance of these operator estimates is to control the redistribution of collisions when walks switch pairs. 
The operator framework (together with the renewal one) allows to reduce the number of Feynman-type diagrams that need to be considered.
For the reduced Feynman diagrams, one, then,
 needs to look into the logarithmic structure, which induces a {\it separation of scales} and leads to the fact that,
asymptotically, the structure of the Feynman diagrams becomes that of the product of Feynman diagrams corresponding to
Laplace transforms of single pairs of random walks. 
\vskip 2mm
{\bf Relations to random polymers.}
Exponential moments of collision times arise naturally when one looks at moments of partition functions of 
the model of directed polymer in a random environment (DPRE), we refer to \cite{C17} for an introduction to this model.
For a family of i.i.d. variables $\big( \omega_{n,x} \colon n\in\N, x\in \Z^2 \big)$ with log-moment generating function $\lambda(\beta)$,
 the partition function of the directed polymer measure is defined as
\begin{align*} 
	Z_{N,\beta}(x):=\E_{x}\Big[\exp\Big(\sum_{n=1}^{N} \big(\beta \omega_{n,S_n} -\lambda(\beta) \big) \Big)   \Big ] ,
\end{align*}
where $\E_{x}$ is the expected value with respect to a simple, symmetric walk starting at $x\in\Z^2$. 
In the case that
$\omega_{n,x}$ is a standard normal variable, 
an explicit computation, for $\beta_N:=\beta \sqrt{\tfrac{\pi}{ \log N}}$, gives that
\begin{align}\label{h-mom}
\bbE\Big[ \big( Z_{N,\beta_N}(x) \big)^h \Big] =  \E^{\otimes h}\bigg[ e^{ \frac{\pi \beta^2}{\log N}  \sum_{1\leq i<j\leq h}  \sfL_N^{(i,j)}} \bigg].
\end{align}
A corollary of our Theorem \ref{main_result} is that the limit of the $h$-th moment of the DPRE partition function
converges to $(1-\beta^2)^{-h(h-1)/2}$; a result that was previously obtained in \cite{LZ21} combining upper bounds on moments,
established in \cite{LZ21}, with results on the distributional convergence of the partition function established in \cite{CSZ17}. 
Using Theorem \ref{main_result}, we can further extend this to convergence of mixed moments. More precisely, 
for $\beta_{i,N}:=\beta_i  \sqrt{\tfrac{\pi}{\log N}}$ and $i=1,...,h$, we have that
\begin{align}\label{joint}
\bbE\big[  Z_{N,\beta_{1,N}}(x) \cdots Z_{N,\beta_{h,N}}(x)  \big] 
= \E^{\otimes h}\bigg[ e^{ \frac{\pi}{\log N}  \sum_{1\leq i<j\leq h}  \beta_{i}\beta_{j} \sfL_N^{(i,j)}} \bigg]
\xrightarrow[N\to\infty]{} \prod_{1\leq i<j \leq N} \frac{1}{1-\beta_i\beta_j}.
\end{align}
where the equality is again via an explicit computation as in \eqref{h-mom} when the disorder is standard normal and the convergence 
follows from Theorem \ref{main_result} after specialising the parameters $\beta_{i,j}$ to the particular case of $\beta_i\beta_j$.
\footnote{\eqref{joint} could also be achieved if one refined the results of \cite{CSZ17} to joint convergence of 
$Z_{N,\beta_{1,N}}(x), \dots ,Z_{N,\beta_{h,N}}(x)$ and combined with the moment estimates of \cite{LZ21}. However, the independence of
the collision times that we establish here cannot be recovered from joint moments of partition functions as these will only give rise to Laplace transforms with a restricted form of Laplace parameters as $\{\beta_i\beta_j\}_{1\leq i<j\leq h}$ instead of $\{\beta_{i,j}\}_{1\leq i<j\leq h}$, 
needed for the joint convergence of the vector $\{L_N^{(i,j)}\}_{1\leq i <j \leq h}$.
The structural aspects of the collision structure that we unveil in the present work are crucial towards establishing the independence property.}

Moment estimates on polymer partition functions 
are important in establishing fine properties, such as structure of maxima, of the field of partition functions 
$\big\{ \sqrt{\log N } \big( \log Z_{N,\beta}(x) -\bbE [\log Z_{N,\beta}(x) ] \big)\colon x\in \Z^2 \big\}$, which is known to converge to 
a log-correlated gaussian field \cite{CSZ18}. We refer to \cite{CZ21} for more details. We expect the independence 
structure of the collision local times, that we establish here, to be useful towards these investigations.
An interesting problem, in relation to this (but also of broader interest),
 is how large can the number $h$ of random walks be (depending on $N$),
before we start seeing correlations in the limit of the rescaled collisions.
The work of Cosco-Zeitouni \cite{CZ21}
has shown that there exists $\beta_0 \in (0,1)$ such that for all $\beta \in (0,\beta_0)$ and $h=h_N \in \N$ such that
$$
  \limsup_{N \to \infty} \frac{3\beta}{1-\beta} \frac{1}{\log N} \binom{h}{2}<1 \, ,
$$
one has that
\begin{equation*}
  \E^{\otimes h}\bigg[ e^{ \frac{\pi\, \beta}{\log N}  \sum_{1\leq i<j\leq h}   \sfL_N^{(i,j)}} \bigg] \leq c(\beta)\, \Big(\frac{1}{1-\beta}\Big)^{\binom{h}{2} (1+\epsilon_N)} \, ,
\end{equation*}
with $c(\beta) \in (0,\infty)$ and $0\leq \epsilon_N =\epsilon(\beta,N)\downarrow 0$ as $N \to \infty$. 
This suggests that the threshold might be $h=h_N=O(\sqrt{\log N})$. More recent results
\cite{CZ23}, imply that the independence fails  when the number of walks is $ \gg \log N$.
The question of whether there is a critical constant $c_{crit.}$ such that for a number of walks larger that $c_{crit.}\log N$ the independence of the collisions
fails is an open and interesting problem.
\vskip 2mm
{\bf Outline.}
The structure of the article is as follows:
In Section \ref{Preliminaries} we set the framework of the chaos expansion, its graphical representations in
terms of Feynman-type diagrams, as well as the renewal and functional analytic frameworks. In Section \ref{approximation_steps}
we carry out the approximation steps, which lead to our theorem. At the beginning of Section \ref{approximation_steps}
we also provide an outline of the scheme.

We close by mentioning our convention on the constants: whenever a constant depends on specific parameters, we will indicate this at the beginning of the statements but then drop the dependence, while if no dependence on parameters is indicated, then
they will be understood as absolute constants.
\vskip 0.2cm

\section{Chaos expansions and auxiliary results} \label{Preliminaries}
In this section we will introduce the framework, within which we work, and which consists of chaos expansions for the joint Laplace transform
\begin{equation} \label{M_N,h}
  M^{\boldsymbol{\beta}}_{N,h}:=\E^{\otimes h}\bigg[ e^{   \sum_{1\leq i<j\leq h}  \frac{\pi\beta_{i,j}}{\log N} \, \sfL_N^{(i,j)}} \bigg] \, ,
\end{equation}
for a fixed collection of numbers $\boldsymbol{\beta}:=\{\beta_{i,j}\}_{1\leq i<j \leq h} \in \R^{\frac{h(h-1)}{2}}$
with $|\beta_{i,j} | \in (0,1)$ for all $1 \leq i<j \leq h$. We denote by
\begin{equation}\label{barb}
  \bar{\beta}:=\max_{1\leq i<j \leq h} | \beta_{i,j} |<1 \,  ,
\end{equation}
and define
\begin{align}\label{def:sigma}
  \sigma_N^{i,j}:=\sigma_N^{i,j}(\beta_{i,j}):= e^{ \beta^{i,j}_N}-1 \qquad \text{with} \qquad \beta^{i,j}_N:=\frac{\pi \, \beta_{i,j}}{\log N}.
\end{align}
{\bf Convention:} From now on we will be assuming that all parameters $\beta_{i,j}$ are nonnegative. We will return to the general
case at the very end when discussing the proof of Theorem \ref{main_result}.
\vskip 2mm
We will use the notation $q_n(x):=\P(S_n=x)$ for the transition probability of the simple, symmetric random walk.
The expected collision local time between two independent simple, symmetric random walks will be
\begin{equation} \label{R_N}
  R_N:=\E^{\otimes 2}\Big[\sum_{n=1}^N \ind_{S_n^{(1)}=S_n^{(2)}}\Big]=\sum_{n=1}^N q_{2n}(0)
\end{equation}
and by Proposition 3.2 in \cite{CSZ19a} we have that in the two-dimensional setting
\begin{equation} \label{R_N_asymp}
  R_N=\frac{\log N}{\pi}+ \frac{\alpha}{\pi}+ o(1) \, ,
\end{equation}
as $N\to \infty$, with $\alpha=\gamma +\log 16 -\pi \simeq 0.208$ and $\gamma \simeq 0.577$ is the Euler constant.

\subsection{Chaos expansion for two-body collisions and renewal framework.} 
We start with the Laplace transform of the simple case of two-body collisions $ \E \Big[e^{\beta^{i,j}_N\, \sfL^{(i,j)}_N}\Big]$ and deduce its
chaos expansion as follows:
\begin{align}\label{eq:2body}
  \E \Big[e^{\beta^{i,j}_N\, \sfL^{(i,j)}_N}\Big]
   & = \E \bigg[e^{ \beta^{i,j}_N \,
  \sum_{n=1}^N \sum_{x\in \Z^2} \ind_{\{ S_n^{(i)}=x \}} \ind_{\{S_n^{(j)}=x \}}}  \bigg]
  =\E \bigg[ \prod_{\substack{ 1\leq n \leq N                                    \\ x\in \Z^2 }}  \Big( 1+ \Big(e^{ \beta^{i,j}_N}-1 \Big)
  \, \ind_{\{ S_n^{(i)}=x \}} \ind_{\{S_n^{(j)}=x \}} \Big)\bigg] \notag         \\
   & =1+\sum_{k\geq 1} (\sigma_N^{i,j})^k
  \sum_{\substack{1\leq n_1< \dots<n_k\leq N                                     \\ x_1,...,x_k\in\Z^2}} \E \bigg[\prod_{a=1}^k
  \, \ind_{\{ S_{n_a}^{(i)}=x_a \}} \ind_{\{S_{n_a}^{(j)}=x_a \}}  \bigg] \notag \\
   & =1+ \sum_{k\geq 1}  (\sigma_N^{i,j})^k
  \sumtwo{1\leq n_1< \dots<n_k\leq N, \,}{x_1,\dots,x_k \in \Z^2} \prod_{a=1}^k q^2_{n_a-n_{a-1}}(x_a-x_{a-1})
\end{align}
where in the last equality we used the Markov property, in the third we expanded the product
and in the second we used the simple fact that
\begin{align*}
  e^{\beta_N^{i,j} \, \ind_{\{ S_n^{(i)}=S_n^{(j)}=x \}} }
   & = 1 + \Big( e^{\beta_N^{i,j} \, \ind_{\{ S_n^{(i)}= S_n^{(j)}=x\} } }-1 \Big)
  = 1 + \Big( e^{\beta_N^{i,j}}  -1 \Big) \ind_{\{ S_n^{(i)}=S_n^{(j)}=x \}  }     \\
   & = 1 + \sigma_N^{i,j} \,  \ind_{\{ S_n^{(i)}=S_n^{(j)}=x \}  },
\end{align*}
with $\sigma^{i,j}_{N}$ defined in \eqref{def:sigma}.
We will express \eqref{eq:2body} in terms of the following quantity $U^{\beta}_N(n,x)$, which plays an important role in our formulation.
For $\sigma_N:=\sigma_N(\beta):=e^{\frac{\pi\beta}{\log N}}-1$ and $(n,x)\in\N\times \Z^2$, we define
\begin{equation} \label{Un}
  \begin{split}
    U^{\beta}_N(n,x) & :=\sigma_{N}\, q^2_{n}(x)\\
    & + \sum_{k \geq 1} \sigma_{N}^{k+1} \sumtwo{0<n_1<\dots<n_k<n}{z_1,z_2,\dots,z_k \in \Z^2}  q^2_{n_1}(z_1)\Big\{ \prod_{j=2}^k q^2_{n_j-n_{j-1}}(z_j-z_{j-1}) \Big\}\, q^2_{n-n_k}(x-z_k).
  \end{split}
\end{equation}
and  $U_N^{\beta}(n,x):=\ind_{\{x=0\}}$, if $n=0$. Moreover, for $n\in\N$ we define
\begin{equation*}
  U^{\beta}_N(n):=\sum_{x \in \Z^2} U^{\beta}_N(n,x) \, .
\end{equation*}
$U^{\beta}_N(n,x)$ represents the Laplace transform of the two-body collisions, scaled by $\beta$,
between a pair of random walks that are constrained to end at the spacetime point $(n,x) \in \{1,\dots,N\}\times \Z^2$, starting from $(0,0)$.
In particular, for any $1\leq i <j\leq h$, we can write \eqref{eq:2body} as
\begin{align*}
  \E \Big[e^{\beta^{i,j}_N\, \sfL^{(i,j)}_N}\Big] = \sum_{n=0}^N \sum_{x \in \Z^2} U^{\beta_{i,j}}_N(n,x)  =  \sum_{n=0}^N U^{\beta_{i,j}}_N(n).
\end{align*}
We will call $U^{\beta}_N(n,x)$ a {\bf replica} and for $\sigma_N(\beta)=e^{\frac{\pi \beta}{\log N}}-1$
we will graphically represent $\sigma_N(\beta)\, U^{\beta}_N(n,x)$ as
\begin{align*} 
  \sigma_N(\beta) \, U^\beta_N(b-a,y-x)
   & \equiv \quad
  \begin{tikzpicture}[baseline={([yshift=1.3ex]current bounding box.center)},vertex/.style={anchor=base,
          circle,fill=black!25,minimum size=18pt,inner sep=2pt}, scale=0.45]
    \draw  [fill] (0, 0)  circle [radius=0.2];  \draw  [fill] ( 4,0)  circle [radius=0.2];
    \draw [-,thick, decorate, decoration={snake,amplitude=.4mm,segment length=2mm}] (0,0) -- (4,0);
    \node at (0,-1) {\scalebox{0.8}{$(a,x)$}}; \node at (4,-1) {\scalebox{0.8}{$(b,y)$}};
  \end{tikzpicture} \\
   & := \sum_{k\geq 1 }\sumtwo{n_1<\cdots < n_k}{x_1,...,x_k}
  \begin{tikzpicture}[baseline={([yshift=1.3ex]current bounding box.center)},vertex/.style={anchor=base,
          circle,fill=black!25,minimum size=18pt,inner sep=2pt}, scale=0.45]
    \draw  [fill] ( 0,0)  circle [radius=0.2];
    \draw  [fill] ( 2,0)  circle [radius=0.2]; \draw  [fill] (4,0)  circle [radius=0.2];
    \draw  [fill] (8, 0)  circle [radius=0.2]; \draw  [fill] (10, 0)  circle [radius=0.2];
    \draw [thick] (0,0)  to [out=45,in=135]  (2,0) to [out=45,in=135]  (4,0) to [out=45,in=180]  (4.5,0.3);
    \draw [thick] (0,0)  to [out=-45,in=-135]  (2,0) to [out=-45,in=-135]  (4,0)  to [out=-45,in=180]  (4.5,-0.3);
    \draw [thick] (7.5,0.3)  to [out=0,in=135]  (8,0) to [out=45,in=135]  (10,0);
    \draw [thick] (7.5,-0.3) to [out=0,in=-135] (8,0)  to [out=-45,in=-135]  (10,0);
    \node at (0,-1) {\scalebox{0.7}{$(a,x)$}}; \node at (2,-1) {\scalebox{0.7}{$(n_1,x_1)$}};
    \node at (4,-1) {\scalebox{0.7}{$(n_2,x_2)$}}; \node at (8,-1) {\scalebox{0.7}{$(n_k,x_k)$}};
    \node at (10,-1) {\scalebox{0.7}{$(b,y)$}};
    \node at (6,0) {$\cdots$};
  \end{tikzpicture} \notag
\end{align*}
In the second line we have assigned weights $q_{n'-n}(x'-x)$ to the solid lines going from
$(n,x)$ to $(n',x')$ and we have assigned the weight
$\sigma_N(\beta)=e^{\frac{\pi \beta}{\log N}}-1$ to every solid dot.

$U^{\beta}_N(n)$ and $U^{\beta}_N(n,x)$ admit a very useful probabilistic interpretation in terms of certain renewal processes.
More specifically,  consider the family of i.i.d. random variables
$(T^{\ms (N)}_i, X^{\ms (N)}_i)_{i \geq 1}$ with law
\begin{align*} 
  \P\Big(\,\big(T^{\ms (N)}_1, X^{\ms (N)}_1 \big)=(n,x) \Big)=\frac{q_n^2(x)}{R_N}\,\ind_{\{n \leq N\}} \, .
\end{align*}
and $R_N$ defined in \eqref{R_N}.
Define the random variables
$\tau^{\ms (N)}_k:=T_1^{\ms (N)}+\dots+T_k^{\ms (N)}$, $S^{\ms (N)}_k:=X^{\ms (N)}_1+\dots+X^{\ms (N)}_k$, if $k \geq 1$, and
$(\tau_0,S_0):=(0,0)$, if $k=0$.
It is not difficult to see that $U^{\beta}_N(n,x) $ and $U^{\beta}_N(n) $ can, now, be written as
\begin{equation} \label{ren_rep}
  U^{\beta}_N(n,x)  =\sum_{k\geq 0} (\sigma_N R_N)^{k} \, \P\big(\tau^{\ms (N)}_k=n, S^{\ms (N)}_k=x\big) \quad \text{and} \quad
  U^{\beta}_N(n)    =\sum_{k\geq 0}  (\sigma_N R_N)^{k} \, \P\big(\tau^{\ms (N)}_k=n\big)
\end{equation}
This formalism was developed in \cite{CSZ19a} and is very useful in obtaining sharp asymptotic estimates.
In particular, it was shown in \cite{CSZ19a} that the rescaled process
$\Big(\frac{\tau^{(N)}_{\lfloor s\log N\rfloor}}{N}, \frac{S^{(N)}_{\lfloor s\log N\rfloor}}{\sqrt{N}} \Big)$ converges in distribution for $N\to\infty$
with the law of the marginal limiting process for $\tfrac{\tau^{(N)}_{\lfloor s\log N\rfloor}}{N}$ being the {\it Dickman subordinator},
which was defined in \cite{CSZ19a} as a truncated, zero-stable L\'evy process.

An estimate that follows easily from this framework, which is useful for our purposes here, is the following: for $\beta<1$, it holds
\begin{align}\label{Uest}
  \limsup_{N\to\infty}\sum_{n=0}^N U^{\beta}_N(n)
   & =\limsup_{N\to\infty}\sum_{k\geq 0}  (\sigma_N R_N)^{k} \, \P\big(\tau^{\ms (N)}_k \leq N\big) \notag \\
   & \leq \limsup_{N\to\infty} \sum_{k\geq 0}  (\sigma_N R_N)^{k} \notag                                   \\
   & = \limsup_{N\to\infty} \frac{1}{1-\sigma_N R_N} = \frac{1}{1-\beta},
\end{align}
where we used the fact that
\begin{align}\label{sigmaR}
  \sigma_N R_N
  = \big(e^{\frac{\pi \beta}{\log N}}-1\big)\cdot \Big( \frac{\log N}{\pi}+\frac{\alpha}{\pi}+o(1)\Big)
  \xrightarrow[N\to\infty]{} \beta <1.
\end{align}
\subsection{Chaos expansion for many-body collisions.} \label{chaos_expansion_for_many_body_collisions}
We now move to the expansion of the Laplace transform $M^{\boldsymbol{\beta}}_{N,h}$ of the many-body collisions .
The goal is to obtain an expansion in the form of products of certain Markovian operators. The desired expression will be
presented in \eqref{def:Pexpansion}. This expansion will be instrumental in obtaining some important estimates in
Section \ref{Operators}.

The first steps are similar as in the expansion for the two-body collisions, above. In particular, we have
\begin{align}\label{def:expansion}
 M^{\boldsymbol{\beta}}_{N,h} &:= \E^{\otimes h}\bigg[e^{\sum_{1\leq i<j \leq h}\beta^{i,j}_N \,  \sfL^{(i,j)}_{N}}\bigg]
    = \E \bigg[ \prod_{1\leq i<j \leq h} \, \prod_{\substack{ 1\leq n \leq N                                   \\ x\in \Z^2 }}  \Big( 1+\sigma_N^{i,j}
  \, \ind_{\{ S_n^{(i)}=x \}} \ind_{\{S_n^{(j)}=x \}} \Big)\bigg]                                               \\
   & = 1+ \sum_{k\geq 1}  \,\,  \sum_{\substack{ (i_a, j_a, n_a, x_a) \in \cA_h, \,\, \text{for} \,\, a=1,...,k \\ \text{distinct}}}
  \,\, \E \Big[ \prod_{a=1}^k  \sigma_N^{i_a,j_a}\, \ind_{\{ S_{n_a}^{(i_a)} = x_a \}} \,\ind_{ \{S_{n_a}^{(j_a)}=x_a \}}  \Big] \notag
\end{align}
where the last sum is over $k$ distinct elements of the set
\begin{align*}
  \cA_h:=\big\{ (i,j,n,x) \in \N^3\times \Z^2 \colon 1\leq i<j \leq h\big\}.
\end{align*}

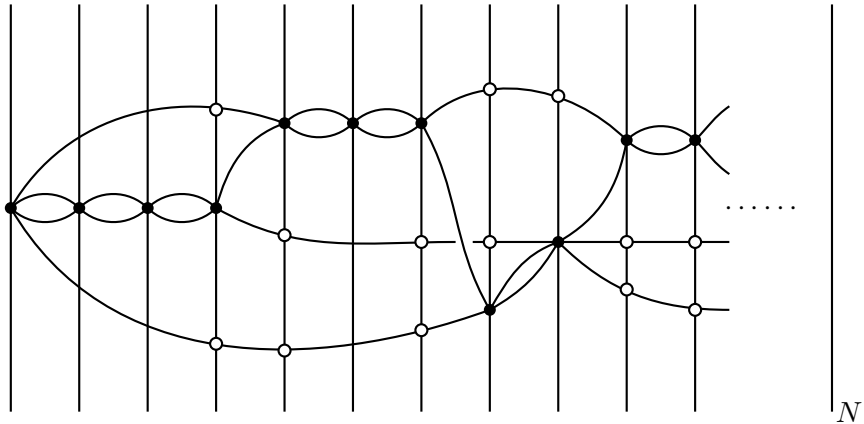
\begin{figure}
  \begin{tikzpicture}[scale=0.45]
    \foreach \i in {0,1,...,10}{
        \draw[-, thick] (2*\i,-6) -- (2*\i, 6);
      }
    \fill[black]  (0,0) circle [radius=0.175]; \fill[black]  (2,0) circle [radius=0.175]; \fill[black]  (4,0) circle [radius=0.175];
    \fill[black]  (6,0) circle [radius=0.175]; \fill[black]  (8,2.5) circle [radius=0.175];
    \fill[black]  (10,2.5) circle [radius=0.175]; \fill[black]  (12,2.5) circle [radius=0.175];
    \fill[black]  (14, -3) circle [radius=0.175]; \fill[black]  (16, -1) circle [radius=0.175];
    \fill[black]  (18, 2) circle [radius=0.175]; \fill[black]  (20, 2) circle [radius=0.175];
    \draw[thick] (0,0) to [out=-45,in=-135] (2,0) to [out=-45,in=-135] (4,0) to [out=-45,in=-135] (6,0);
    \draw[thick] (0,0) to [out=45,in=135] (2,0) to [out=45,in=135] (4,0) to [out=45,in=135] (6,0);
    \draw[thick] (6,0) to [out=75,in=200] (8,2.5) ; \draw[thick] (0,0) to [out=60,in=160] (8,2.5) ;
    \draw[thick] (8,2.5) to [out=45,in=135] (10, 2.5) to [out=45,in=135] (12,2.5);
    \draw[thick] (8,2.5) to [out=-45,in=-135] (10, 2.5) to [out=-45,in=-135] (12,2.5);
    \draw[thick] (0,0) to [out=-60,in=200] (14,-3) ; \draw[thick] (12,2.5) to [out=-60,in=120] (14,-3) ;
    \draw[thick] (6,0) to [out=-30,in=180] (13, -1) ; \draw[thick] (13.5,-1) to [out=0,in=180] (16, -1) ;
    \draw[thick] (14,-3) to [out=30,in=-120] (16, -1) ;
    \draw[thick] (16,-1) to [out=30,in=-100] (18, 2) ; \draw[thick] (12, 2.5) to [out=45,in=135] (18, 2) ;
    \draw[thick] (18, 2) to [out=45,in=135] (20, 2) ;  \draw[thick] (18, 2) to [out=-45,in=-135] (20, 2) ;
    \draw[thick] (16, -1) to [out=-45,in=180] (21, -3) ;  \draw[thick] (14, -3) to [out=60,in=200] (16, -1) ;
    \draw[thick]  (16, -1) to [out=0,in=180] (21, -1) ;
    \draw[thick] (20, 2) to [out=-45,in=145] (21, 1) ;   \draw[thick] (20, 2) to [out=45,in=-145] (21, 3) ;
    \fill[white]  (6, -4) circle [radius=0.175]; \draw[thick]  (6,-4) circle [radius=0.175];
    \fill[white]  (6, 2.9) circle [radius=0.175]; \draw[thick]  (6,2.9) circle [radius=0.175];
    \fill[white]  (8, -4.2) circle [radius=0.175]; \draw[thick]  (8,-4.2) circle [radius=0.175];
    \fill[white]  (8, -0.8) circle [radius=0.175]; \draw[thick]  (8,-0.8) circle [radius=0.175];
    \fill[white]  (12, -1) circle [radius=0.175]; \draw[thick]  (12,-1) circle [radius=0.175];
    \fill[white]  (12, -3.6) circle [radius=0.175]; \draw[thick]  (12,-3.6) circle [radius=0.175];
    \fill[white]  (14, -1) circle [radius=0.175]; \draw[thick]  (14,-1) circle [radius=0.175];
    \fill[white]  (14, 3.5) circle [radius=0.175]; \draw[thick]  (14,3.5) circle [radius=0.175];
    \fill[white]  (16, 3.3) circle [radius=0.175]; \draw[thick]  (16,3.3) circle [radius=0.175];
    \fill[white]  (18, -1) circle [radius=0.175]; \draw[thick]  (18,-1) circle [radius=0.175];
    \fill[white]  (18, -2.4) circle [radius=0.175]; \draw[thick]  (18,-2.4) circle [radius=0.175];
    \fill[white]  (20, -3) circle [radius=0.175]; \draw[thick]  (20,-3) circle [radius=0.175];
    \fill[white]  (20, -1) circle [radius=0.175]; \draw[thick]  (20,-1) circle [radius=0.175];
    \node at (22,0) {{$\cdots\cdots$}};  \node at (24.5,-6) {{$N$}};
    \draw[-, thick] (24,-6) -- (24, 6);
  \end{tikzpicture}
  \caption{ This is a graphical representation of expansion \eqref{def:expansion}
    corresponding to the collisions of four random walks, each starting
    from the origin. Each solid line will be marked with the label of the walk that it corresponds to throughout the diagram.
    Each solid dot, which marks a collision among a subset $A$ of the random walks, is given a weight
    $\prod_{ i,j\in A} \sigma_N^{i,j}$. Any solid line between points $(m,x), (n,y)$ is assigned the
    weight of the simple random  walk transition kernel $q_{m-n}(y-x)$.
    The hollow dots are assigned weight $1$ and they mark the places where
    we simply apply the Chapman-Kolmogorov formula.
  }\label{fig:expansion}
\end{figure}

The graphical representation of expansion \eqref{def:expansion} is depicted in Figure \ref{fig:expansion}. There, we have marked
with black dots the space-time points $(n,x)$ where some of the walks collide and we have assigned to each the weight
$\prod_{1\leq i<j \leq h}\sigma_N^{i,j} \ind_{\{S_n^{(i)}=S_n^{(j)}=x \}}$.

We now want to write the above expansion as a convolution of Markovian operators, following the Markov property of the simple
random walks.
We can partition the time interval $\{0,1,...,N\}$ according to the times when collisions take place; these are depicted in Figure \ref{fig:expansion} by vertical lines.
In between two successive times $m,n$, the walks will move from their locations $(x^{(i)})_{i=1,...,h}$ at time $m$
to their new locations  $(y^{(i)})_{i=1,...,h}$ at time $n$ (some of which might coincide) according to their transition probabilities, giving a
total weight to this transition of $\prod_{i=1}^h q_{n-m}(y^{(i)}-x^{(i)})$.
We, now, want to encode in this product
the coincidences that may take place
within the sets $(x^{(i)})_{i=1,...,h}$ and $(y^{(i)})_{i=1,...,h}$. To this end, we consider partitions $I$ of the set of indices
$\{1,...,h\}$, which we denote by $I\vdash \{1,...,h\}$. We also denote by $|I|$ the number of parts of $I$.
Given a partition $I \vdash \{1,\dots,h\}$, we define an equivalence relation $\stackrel{I}{\sim}$ in $\{1,\dots,h\}$ such that $k \stackrel{I}{\sim} \ell$ if and only if $k$ and $\ell$ belong to the same part of partition $I$. Given a vector $\by=(y_1,\dots,y_h) \in (\Z^2)^h$ and $I \vdash \{1,\dots,h\}$, we shall use the notation $\by \sim I$ to mean that $y_k=y_\ell$ for all pairs $k \stackrel{I}{\sim}\ell$. We use the symbol $\circ$ to denote the one-part partition\footnote{the notation $\circ$, with which we denote the one-part partition, here, should not be confused with the $\circ$ that appears in the figures, where
  it just marks places where we apply the Chapman-Kolmogorov formula. }, that is, $\circ:=\{1,\dots,h\}$,
and $*$ to denote the partition consisting only of singletons, that is $*:=\bigsqcup_{i=1}^h \{i\}$. Moreover, given $I \vdash \{1,\dots,h\}$ such that $|I|=h-1$ and $I=\{i,j\}\sqcup \bigsqcup_{k \neq i,j} \{k\}$, by slightly abusing notation, we may identify and denote $I$ by its non-trivial part $\{i,j\}$.

Given this formalism, we denote the total transition weight of the $h$ walks, from points $\bx=(x^{(1)},...,x^{(h)})\in (\Z^2)^h$,
subject to constraints $\bx \sim I$ at time $m$, to points $\by=(y^{(1)},...,y^{(h)})\in(\Z^2)^h$, subject to constraints $\by \sim J$ at time $n$, by
\begin{equation} \label{free_evol}
  Q^{I,J}_{n-m}(\bx,\by):=\ind_{\{\bx\sim I\}} \, \prod_{i=1}^h q_{n-m}(y^{(i)}-x^{(i)}) \, \ind_{\{\by \sim J\}}\, .
\end{equation}
We will call this operator the {\bf constrained evolution}.
Furthermore, for a partition $I \vdash \{1,\dots,h\}$ and $\boldsymbol{\beta}=\{\beta_{i,j}\}_{1\leq i<j \leq h}$
we define the {\bf mixed collision weight} subject to $I$ as
\begin{equation}\label{def:multi-sigma}
  \sigma_N(I):=\sigma_N(I,\{\beta_{i,j}\}_{1\leq i<j \leq h})=\prodtwo{1\leq i<j \leq h,}{i \stackrel{I}{\sim}j} \sigma_N^{i,j} ,
\end{equation}
with $\sigma_N^{i,j} $ as defined in \eqref{def:sigma}.
We can then rewrite \eqref{def:expansion} in the form
\begin{equation} \label{laplace_expansion}
  1+ \sum_{r=1}^{\infty} \sumtwo{\circ:=I_0}{I_1,\dots,I_r \neq *} \prod_{i=1}^r \sigma_N(I_i) \sumtwo{1 \leq n_1<\dots<n_r\leq N}{0:=\bx_0 ,\bx_1, \dots, \bx_r \in (\Z^2)^h} \prod_{i=1}^r Q^{I_{i-1};I_i}_{n_i-n_{i-1}}(\bx_{i-1},\bx_i) \, .
\end{equation}

\begin{figure}
  \begin{tikzpicture}[scale=0.45]
    \foreach \i in {6,7,...,10}{
        \draw[-, thick] (2*\i,-6) -- (2*\i, 6);
      }
    \draw[-, thick] (0,-6) -- (0, 6); \draw[-, thick] (6,-6) -- (6, 6); \draw[-, thick] (8,-6) -- (8, 6);
    \fill[black]  (0,0) circle [radius=0.175];
    \fill[black]  (6,0) circle [radius=0.175]; \fill[black]  (8,2.5) circle [radius=0.175];
    \fill[black]  (12,2.5) circle [radius=0.175];
    \fill[black]  (14, -3) circle [radius=0.175]; \fill[black]  (16, -1) circle [radius=0.175];
    \fill[black]  (18, 2) circle [radius=0.175]; \fill[black]  (20, 2) circle [radius=0.175];
    \draw [-,thick, decorate, decoration={snake,amplitude=.4mm,segment length=2mm}] (0,0) -- (6,0);
    \draw[thick] (6,0) to [out=75,in=200] (8,2.5) ; \draw[thick] (0,0) to [out=60,in=160] (8,2.5) ;
    \draw [-,thick, decorate, decoration={snake,amplitude=.4mm,segment length=2mm}] (8,2.5) -- (12,2.5);
    \draw[thick] (0,0) to [out=-60,in=200] (14,-3) ; \draw[thick] (12,2.5) to [out=-60,in=120] (14,-3) ;
    \draw[thick] (6,0) to [out=-30,in=180] (13, -1) ; \draw[thick] (13.5,-1) to [out=0,in=180] (16, -1) ;
    \draw[thick] (14,-3) to [out=30,in=-120] (16, -1) ;
    \draw[thick] (16,-1) to [out=30,in=-100] (18, 2) ; \draw[thick] (12, 2.5) to [out=45,in=135] (18, 2) ;
    \draw[thick] (18, 2) to [out=45,in=135] (20, 2) ;  \draw[thick] (18, 2) to [out=-45,in=-135] (20, 2) ;
    \draw[thick] (16, -1) to [out=-45,in=180] (21, -3) ;  \draw[thick] (14, -3) to [out=60,in=200] (16, -1) ;
    \draw[thick] (16, -1) to [out=0,in=180] (21, -1) ;
    \draw[thick] (20, 2) to [out=-45,in=145] (21, 1) ;   \draw[thick] (20, 2) to [out=45,in=-145] (21, 3) ;
    \fill[white]  (6, -4) circle [radius=0.175]; \draw[thick]  (6,-4) circle [radius=0.175];
    \fill[white]  (6, 2.9) circle [radius=0.175]; \draw[thick]  (6,2.9) circle [radius=0.175];
    \fill[white]  (8, -4.2) circle [radius=0.175]; \draw[thick]  (8,-4.2) circle [radius=0.175];
    \fill[white]  (8, -0.8) circle [radius=0.175]; \draw[thick]  (8,-0.8) circle [radius=0.175];
    \fill[white]  (12, -1) circle [radius=0.175]; \draw[thick]  (12,-1) circle [radius=0.175];
    \fill[white]  (12, -3.6) circle [radius=0.175]; \draw[thick]  (12,-3.6) circle [radius=0.175];
    \fill[white]  (14, -1) circle [radius=0.175]; \draw[thick]  (14,-1) circle [radius=0.175];
    \fill[white]  (14, 3.5) circle [radius=0.175]; \draw[thick]  (14,3.5) circle [radius=0.175];
    \fill[white]  (16, 3.3) circle [radius=0.175]; \draw[thick]  (16,3.3) circle [radius=0.175];
    \fill[white]  (18, -1) circle [radius=0.175]; \draw[thick]  (18,-1) circle [radius=0.175];
    \fill[white]  (18, -2.4) circle [radius=0.175]; \draw[thick]  (18,-2.4) circle [radius=0.175];
    \fill[white]  (20, -3) circle [radius=0.175]; \draw[thick]  (20,-3) circle [radius=0.175];
    \fill[white]  (20, -1) circle [radius=0.175]; \draw[thick]  (20,-1) circle [radius=0.175];
    \node at (22,0) {{$\cdots\cdots$}};  \node at (24.5,-6) {{$N$}};
    \draw[-, thick] (24,-6) -- (24, 6);
  \end{tikzpicture}
  \caption{ This is the simplified version of Figure's \ref{fig:expansion}  graphical representation of the expansion
  \eqref{laplace_expansion}, where we have grouped together the blocks of consecutive collisions between the same
  pair of random walks. These are now represented by the wiggle lines ({\bf replicas}) and we call the evolution
  in strips that contain only one replica as {\bf replica evolution} (although strip seven is the beginning of another wiggle line,
  we have not represented it as such since we have not completed the picture beyond that point). 
  The wiggle lines (replicas) between points $(n,x), (m,y)$,
  corresponding to collisions of a single pair of walks $S^{(k)}, S^{(\ell)}$,
  are assigned weight $U_N^{\beta_{k,\ell}}(m-n, y-x)$. A solid line between points $(m,x), (n,y)$ is assigned the
  weight of the simple random
  walk transition kernel $q_{m-n}(y-x)$.
  }\label{fig:expansion2}
\end{figure}
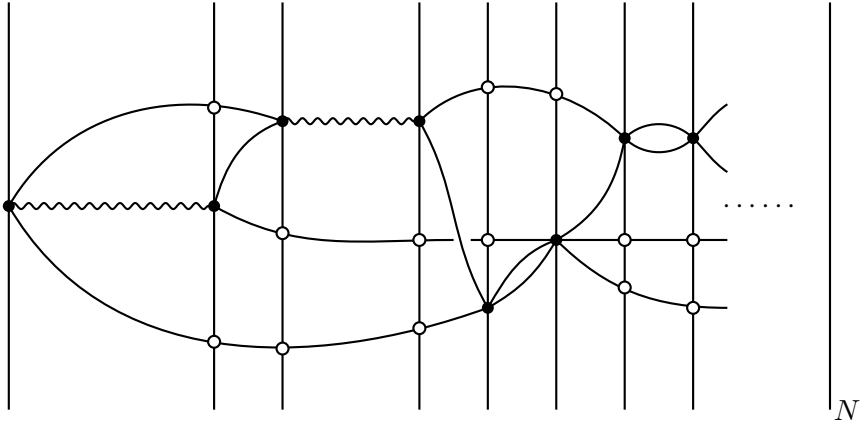

We want to make one more simplification in this representation, which, however, contains an important structural feature. This is to group together
consecutive constrained evolution operators $\sigma_N(I_i) Q^{I_{i-1};I_i}_{n_i-n_{i-1}}(\bx_{i-1},\bx_i) $ for which $I_{i-1}=I_i=h-1$.
An example in Figure \ref{fig:expansion} is the sequence of evolutions in the first three strips and another one is the group of evolutions in strips five and six.
Such groupings can be captured by the following definition:
For a partition $I\vdash\{1,\dots,h\}$ of the form $I=\{k,\ell\} \sqcup \bigsqcup_{j\neq k,\ell}\{j\}$ and $\bx=(x^{(1)},\dots,x^{(h)})$, $\by=(y^{(1)},\dots,y^{(h)}) \in (\Z^2)^h$, we define the {\bf replica evolution} as
  \begin{equation}\label{replica_op}
    \sfU^{I}_n(\bx,\by) := \ind_{\{\bx,\by\sim I\}}  \cdot U^{\beta_{k,\ell}}_{N}(n,y^{(k)}-x^{(k)}) \cdot \prod_{i\neq k,\ell} q_n(y^{(i)}-x^{(i)}) \, ,
  \end{equation}
with $U^{\beta}_{N}(n,y^{(k)}-x^{(k)})$ defined in \eqref{Un}. We name this {\it replica evolution} since in the time interval $[0,n]$
we see a stream of collisions
between only two of the random walks. The simplified version of expansion \eqref{laplace_expansion} (and Figure \ref{fig:expansion})
is presented in Figure \ref{fig:expansion2}.

In order to re-express \eqref{laplace_expansion}
with the reduction of the replica evolution \eqref{replica_op}, we need to introduce one more formalism, which is
\begin{align}\label{def:Pop}
  P^{I;J}_n(\bx,\by):=
  \begin{dcases}
    \sumtwo{m_1\geq 1,\,m_2\geq 0\colon \, m_1+m_2=n\, ,}{  \bz \in (\Z^2)^h}   Q^{I;J}_{m_1}(\bx,\bz)\cdot \sfU^{J}_{m_2}(\bz,\by)\, , & \text{ if } |J|=h-1\, ,  \\
    \qquad \qquad \qquad Q_n^{I;J}(\bx,\by)    \, ,                                                                                     & \text{ if } |J|<h-1 \, ,
  \end{dcases}
\end{align}
where we recall that $|J|$ is the number of parts of $J$ and so $|J|=h-1$ means that $J$ has the form 
$\{k,\ell\}\sqcup \bigsqcup_{i\neq k,\ell} \{i\}$,
corresponding to a pairwise collision, while $|J|<h-1$ means that there are multiple collisions (the latter would correspond to the end of the
eighth strip in Figure \ref{fig:expansion}).
In other words, the operator $P_n^{I;J}$ groups together each replica evolution with its preceding constrained evolution.

We, finally, arrive at the desired expression for the Laplace transform of the many-body collisions:
\begin{equation} \label{def:Pexpansion}
  M^{\boldsymbol{\beta}}_{N,h}
  =  1+\sum_{r = 1}^{\infty}  \sum_{\substack{\circ:=I_0,I_1,\dots,I_r \\ I_j\neq I_{j+1}\,,\, \text{if $|I_j|=|I_{j+1}|=h-1$} \\ \text{for}\,\, j=1,...,r-1}}
   \prod_{i=1}^r  \sigma_N(I_i)\sumtwo{1 \leq n_1<\dots<n_r\leq N \, ,}{0:=\bx_0 ,\bx_1, \dots, \bx_r \in (\Z^2)^h} \prod_{i=1}^r
  P^{I_{i-1};I_i}_{n_i-n_{i-1}}(\bx_{i-1},\bx_i)  \, .
\end{equation}

\subsection{Functional analytic framework and some auxiliary estimates} \label{Operators}

Let us start with some, fairly easy, bounds on operators $Q$ and $\sfU$ (with the estimate on the latter
being an upgrade of estimate \eqref{Uest}). 
\begin{lemma}
  Let the operators $Q^{I;J}_n, \sfU^J_n$ be defined in \eqref{free_evol} and \eqref{replica_op}, respectively.
  For all partitions $I\neq J$ with $|J|=h-1$,  $\bar \beta<1$ defined in \eqref{barb} and $\sigma_N(I)$ defined in \eqref{def:multi-sigma},
  we have the bounds
  \begin{equation} \label{crude_bounds}
    \sum_{0 \leq n \leq N, \, \by \in (\Z^2)^h} \sfU^J_n(\bx,\by) \leq \frac{1}{1-{\bar\beta}\,'} \,
    \quad \text{and} \quad  \sigma_N(J) \cdot\Bigg( \sum_{1\leq n \leq N, \by \in (\Z^2)^h} Q^{I;J}_n(\bx,\by) \Bigg)\leq \bar{\beta}\,' \, ,
  \end{equation}
for any $\bar{\beta}\,' \in (\bar\beta, 1)$ and all large enough $N$.

\end{lemma}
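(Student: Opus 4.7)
The plan is to verify each of the two bounds by exploiting the product structure of the operators $\sfU^J_n$ and $Q^{I;J}_n$ and marginalising out the coordinates that are left free by the constraint $\by\sim J$, thereby reducing both estimates to the already-established single-pair facts \eqref{Uest} and \eqref{sigmaR}.

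First I would handle the bound on $\sfU^J$. Writing $J=\{k,\ell\}\sqcup\bigsqcup_{i\neq k,\ell}\{i\}$ and using the definition \eqref{replica_op}, the sum over $\by$ subject to $\by\sim J$ factorises: for each $i\neq k,\ell$ the coordinate $y^{(i)}$ is free and contributes $\sum_{y^{(i)}}q_n(y^{(i)}-x^{(i)})=1$, while the common value $y^{(k)}=y^{(\ell)}$ ranges over $\Z^2$ with weight $U^{\beta_{k,\ell}}_N(n,\,y^{(k)}-x^{(k)})$, which sums to $U^{\beta_{k,\ell}}_N(n)$ by definition. Summing then over $0\leq n\leq N$ produces $\ind_{\{\bx\sim J\}}\sum_{n=0}^N U^{\beta_{k,\ell}}_N(n)$, and invoking \eqref{Uest} together with $\beta_{k,\ell}\leq\bar\beta<1$ gives the bound $\tfrac{1}{1-\bar\beta\,'}$ for any fixed $\bar\beta\,'\in(\bar\beta,1)$ and all sufficiently large $N$.

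For the second inequality I would perform the same marginalisation on $Q^{I;J}_n$. The walks $i\neq k,\ell$ again contribute unit total mass. For the tied pair, the symmetry of the simple random walk gives
\begin{equation*}
\sum_{y\in\Z^2}q_n(y-x^{(k)})\,q_n(y-x^{(\ell)})=q_{2n}(x^{(\ell)}-x^{(k)}),
\end{equation*}
and Cauchy--Schwarz then yields $q_{2n}(x^{(\ell)}-x^{(k)})\leq q_{2n}(0)$. Summing over $1\leq n\leq N$ produces at most $R_N$, so that
\begin{equation*}
\sigma_N(J)\sum_{n,\by}Q^{I;J}_n(\bx,\by)\;\leq\;\sigma_N^{k,\ell}\,R_N,
\end{equation*}
since $|J|=h-1$ forces $\sigma_N(J)=\sigma_N^{k,\ell}$. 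By \eqref{sigmaR} the right-hand side converges to $\beta_{k,\ell}\leq\bar\beta<1$, so it is bounded by any prescribed $\bar\beta\,'\in(\bar\beta,1)$ for large enough $N$.

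There is no substantive obstacle: once one observes that both operators factor as ``pair factor $\times$ independent random-walk evolution on the remaining coordinates'', the estimates follow mechanically from \eqref{Uest} and \eqref{sigmaR}. The hypothesis $|J|=h-1$ is what makes exactly one pair constrained, so that the single-pair asymptotics apply cleanly; the assumption $I\neq J$ plays only a bookkeeping role, ensuring that the sum in the second bound can legitimately start at $n\geq 1$ (for $n=0$ the kernel $Q^{I;J}_0$ would force $\bx=\by$, making the constraints $\bx\sim I$ and $\by\sim J$ jointly meaningful only in degenerate cases).
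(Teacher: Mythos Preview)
Your proposal is correct and follows essentially the same approach as the paper's proof: in both cases one marginalises out the $h-2$ free coordinates via $\sum_z q_n(z)=1$, reducing the first bound to \eqref{Uest} and the second to the single-pair estimate $\sigma_N(\bar\beta)R_N\to\bar\beta$ from \eqref{sigmaR} after Cauchy--Schwarz. Your intermediate identification $\sum_y q_n(y-x^{(k)})q_n(y-x^{(\ell)})=q_{2n}(x^{(\ell)}-x^{(k)})$ is a slightly more explicit version of the same Cauchy--Schwarz step the paper uses.
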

\begin{proof}
  We start by proving the first bound in \eqref{crude_bounds}.
  By definition \eqref{replica_op} we have that
  \begin{equation*}
    \begin{split}
      \sum_{ n\geq 0, \by \in (\Z^2)^h} \sfU^{J}_n(\bx,\by) &:= 
      \sum_{n\geq 0, \, \by \in (\Z^2)^h}\ind_{\{\bx,\by\sim J\}}  \cdot U^{\beta_{k,\ell}}_{N}(n,y^{(k)}-x^{(k)}) \cdot \prod_{j\neq k,\ell} q_n(y^{(j)}-x^{(j)})\\
      & = \sum_{n \geq 0} U^{\beta_{k,\ell}}_N(n) \, ,
    \end{split}
  \end{equation*}
  by using that $\sum_{z \in \Z^2} q_n(z)=1$ to sum all the kernels 
  $q_n(y^{(j)}-x^{(j)})$ for $j\neq k,\ell$ and $\sum_{z \in \Z^2}U^{\beta}_N(n,z)=U^{\beta}_N(n)$. Moreover, by 
  definition \eqref{Un} and \eqref{def:sigma},  since $\beta_{k,\ell} \leq \bar \beta$, we have
  \begin{equation*}
    \sum_{n \geq 0} U^{\beta_{k,\ell}}_N(n) \leq \sum_{n \geq 0} U^{\bar{\beta}}_N(n)
    =\sum_{k \geq 0} (\sigma_N(\bar \beta) R_N)^k \,\P(\tau^{\ms (N)}_k=n) \, ,
  \end{equation*}
  and by \eqref{sigmaR} we have that for any $\bar\beta\,'\in (\bar\beta, 1)$ and all $N$ large enough
  \begin{equation*}
    \sum_{n \geq 0} U^{\beta_{k,\ell}}_N(n)
    \leq \sum_{k \geq 0} (\bar\beta\,')^k \,\P(\tau^{\ms (N)}_k=n)
    \leq \sum_{k \geq 0} (\bar\beta\,')^k
    =\frac{1}{1-\bar\beta\,'} \, .
  \end{equation*}
  Therefore,
  \begin{equation*}
    \sum_{ n\geq 0, \by \in (\Z^2)^h} \sfU^{J}_n(\bx,\by) \leq (1-\bar\beta\,')^{-1} \, .
  \end{equation*}
  For the second bound in \eqref{crude_bounds} we recall from \eqref{free_evol} that when $J=\{k,\ell\} \sqcup \bigsqcup_{j \neq k,\ell} \{j\}$, then
  \begin{equation*}
    Q^{I;J}_n(\bx,\by):=\bigg(\ind_{\{\bx\sim I\}} \, \prod_{j \neq k,\ell} q_n(y^{(j)}-x^{(j)}) \bigg)\cdot q_n(y^{(k)}-x^{(k)})\cdot q_n(y^{(k)}-x^{(\ell)})\, ,
  \end{equation*}
  since $\by\sim J$ means that $y_k=y_\ell$.
  Therefore, $\sigma_N(J) =\sigma_N(\beta_{i,j})\leq \sigma_N(\bar\beta)$.
  We, now, use that $\sum_{z \in \Z^2}q_n(z)=1$ in order to sum the kernels $q_n(y^{(j)}-x^{(j)}), \, j \neq k,\ell$,
  while we also have by Cauchy-Schwarz that
  \begin{equation*} 
    \sigma_N(J)\cdot \bigg( \sum_{1\leq n \leq N,\, y_k \in \Z^2} q_n(y^{(k)}-x^{(k)})\cdot q_n(y^{(k)}-x^{(\ell)})\bigg)
    \leq \sigma_N(\bar\beta)\cdot \Big(\sum_{n=1}^N q_{2n}(0)\Big) \leq  \bar{\beta}\,' \, ,
  \end{equation*}
  by \eqref{sigmaR}, for all $N$ large enough,
  thus establishing the second bound in \eqref{crude_bounds}.
\end{proof}
Next, in Proposition \ref{operator_norm_bound},
we are going to recall some norm estimates from \cite{LZ21} on the Laplace transform of operators $P^{I;J}_{n}$, defined \eqref{def:Pop}. For this, we need
to set up the functional analytic framework.
We start by defining $(\Z^2)^h_I:=\{\by \in (\Z^2)^h: \by \sim I\}$ and, for $q \in (1,\infty)$,
the $\ell^q((\Z^2)^h_I)$ space of functions $f:(\Z^2)^h_I \to \R$ which have finite norm
\begin{equation*}
\norm{f}_{\ell^q_I}:=\norm{f}_{\ell^q((\Z^2)^h_I)}:=\Bigg(\sum_{\by \in ((\Z^2)^h_I) }\big|f(\by)\big|^q \Bigg)^{\frac{1}{q}}\, .
\end{equation*}
For $q \in (1,\infty)$ and for an {operator $\sfT:\ell^q\big((\Z^2)^h_J\big)\rightarrow \ell^q\big((\Z^2)^h_I\big)$
 with kernel $\sfT(\bx,\by)$,} one can define the pairing
\begin{equation} \label{pairing}
  \langle f,\sfT g\rangle:=\sum_{\bx \,\in (\Z^2)^h_I,\by \,\in (\Z^2)^h_J} f(\bx)\sfT(\bx,\by)g(\by) \, .
\end{equation}
The operator norm will be given by
\begin{equation}\label{op_norm}
  \norm{\sfT}_{\ell^q \to \ell^q}:= \sup_{\norm{g}_{\ell^q_J} \leq 1} \norm{\sfT g}_{\ell^q_I}
  =\sup_{\norm{f}_{\ell^p_I}\leq 1, \, \norm{g}_{\ell^q_J}\leq 1} \langle f,\sfT g \rangle \, ,
\end{equation}
for $p,q \in (1,\infty)$ conjugate exponents, i.e. $\frac{1}{p}+\frac{1}{q}=1$.

We introduce the weighted Laplace transforms of operators $Q^{I,J}$ and $\sfU^J$.
In particular, let $w(x)$ be any continuous function in $L^\infty(\R^2)\cap L^1(\R^2)$ such that $\log w(x)$ is Lipschitz (one can think of
$w(x)=e^{-|x|}$) and define $w_N(x):=w(x/\sqrt{N})$. Also, for a function $g\colon \R^2\to \R$ we define the tensor product
$g^{\otimes h}(x_1,...,x_h)=g(x_1)\cdots g(x_h)$,  The weighted Laplace transforms are now defined as
\begin{equation} \label{operators}
  \begin{split}
    & \widehat{\sfQ}^{I;J}_{N,\lambda}(\bx,\by):=\bigg(\sum_{n \geq 1}^{N} e^{-\lambda \frac{n}{N}} Q^{I;J}_{n}(\bx,\by)\bigg)\cdot \frac{w^{\otimes h}_N(\bx)}{w^{\otimes h}_N(\by)} \, ,\\
    & \widehat{\sfU}^{J}_{N,\lambda}(\bx,\by):=\bigg(\sum_{n \geq 0}^{N}  e^{-\lambda \frac{n}{N}}\, \sfU^{J}_{n}(\bx,\by)\bigg)\cdot \frac{w^{\otimes h}_N(\bx)}{w^{\otimes h}_N(\by)}\,  .
  \end{split}
\end{equation}
The passage to a Laplace transform will help to estimate convolutions involving operators $Q^{I;J}_{n}(\bx,\by)$ and $\sfU^{J}_{n}(\bx,\by)$ and the introduction
of the weight comes handy in improving integrability when these operators are applied to functions which are not in $\ell^1((\Z^2)^h)$.
We will see this in Lemma \ref{rterm-estimate} below.

We also define the Laplace transform operator of the combined evolution \eqref{def:Pop}:
\begin{equation}\label{LaplaceP}
  \widehat\sfP^{I;J}_{N,\lambda}= \begin{dcases}
    \widehat{\sfQ}^{I;J}_{N,\lambda} \, ,                                  & \text{ if   } |J| \neq h-1   \\
    \widehat{\sfQ}^{I;J}_{ N,\lambda} \, \widehat{\sfU}^J_{N,\lambda} \, , & \text{ if } |J|=h-1 \, .
  \end{dcases}
\end{equation}
For our purposes, it will be sufficient to take $\lambda=0$
and consider operators $\widehat{\sfQ}^{I;J}_{N,0}, \widehat{\sfU}^J_{N,0}$ and  $ \widehat\sfP^{I;J}_{N,0}$.

Using the above formalism we summarise in the next proposition some key estimates of \cite{LZ21} (see Propositions 3.2-3.4 and the proof of Theorem 1.3 therein), which are refinements of estimates in \cite{CSZ21} (Section 6) and \cite{DFT94} (Section 3).

\begin{proposition} \label{operator_norm_bound}
  Consider the operators $\widehat{\sfQ}^{I;J}_{N,0}$ and  $ \widehat\sfP^{I;J}_{N,0}$ defined in \eqref{operators} and \eqref{LaplaceP} with $\lambda=0$ and a
  weight function $w\in L^\infty(\R^2)\cap L^1(\R^2)$ such that $\log w(x)$ is Lipschitz.
  Then there exists a constant $C=C(h,\bar{\beta},w) \in (0, \infty)$ (recall $\bar\beta$ from \eqref{barb})
  such that for all $p,q\in (1,\infty)$ with $\frac{1}{p}+\frac{1}{q}=1$ and all partitions $I,J \vdash \{1,\dots,h\}$, 
  such that $|I|,|J|\leq h-1$ and $I \neq J$ when $|I|=|J|=h-1$, we have that
  \begin{equation} \label{P_norm_bound}
    \norm{\widehat\sfP_{N,0}^{I;J}}_{\ell^q \to \ell^q} \leq C \, p \, q \, .
  \end{equation}
  Moreover, if $g \in \ell^q(\Z^2)$,
  \begin{equation} \label{singleton_bound}
    \norm{  \, g^{\otimes h} \, \widehat{\sfQ}^{*;I}_{N,0} }_{\ell^p} \leq C \,q\, N^{\frac{1}{q}} \norm{g}^{h}_{\ell^p} \, ,
  \end{equation}
  for $g^{\otimes h}(x_1,...,x_h):=g(x_1)\cdots g(x_h)$.
\end{proposition}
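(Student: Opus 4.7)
The plan is to establish each bound via a Schur-type estimate combined with the renewal representation \eqref{ren_rep}, following the strategy of \cite{LZ21}. For \eqref{P_norm_bound}, sub-multiplicativity of the operator norm applied to the factorization \eqref{LaplaceP} reduces the problem to bounding $\|\widehat{\sfQ}^{I;J}_{N,0}\|_{\ell^q \to \ell^q}$ and, when $|J|=h-1$, also $\|\widehat{\sfU}^J_{N,0}\|_{\ell^q \to \ell^q}$, separately. The replica factor is the easier of the two: using the factorization \eqref{replica_op} and a Schur test against the weight $w_N^{\otimes h}$, together with the Lipschitz bound on $\log w$ (which lets one absorb the displacement in the weight ratio $w_N^{\otimes h}(\bx)/w_N^{\otimes h}(\by)$), the estimate reduces to controlling $\sum_n U^{\beta_{k,\ell}}_N(n)$, which by \eqref{Uest} is bounded by $(1-\bar\beta\,')^{-1}$ uniformly in $p$ and $q$.

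The heart of the argument is the factor $\widehat{\sfQ}^{I;J}_{N,0}$ when either $I$ or $J$ encodes a pairwise coincidence $y^{(k)}=y^{(\ell)}$. Integrating out the shared endpoint via Chapman-Kolmogorov produces a factor $q_{2n}(x^{(k)}-x^{(\ell)})$ multiplying plain transitions on the other coordinates, and the associated Laplace sum $\sum_{n=1}^{N} e^{-\lambda n/N} q_{2n}(z)$ grows logarithmically in $N$; this is precisely the singularity balanced against the prefactor $\sigma_N(J)\sim \pi\beta_{k,\ell}/\log N$ that accompanies each $\widehat{\sfP}^{I;J}_{N,0}$ in the chaos expansion \eqref{def:Pexpansion}. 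Applying Schur's test against $w_N^{\otimes h}$ and rescaling $n=sN$, one is left with estimating integrals of the form $\int_0^1 \log(1/s)\,ds$ (and weighted variants) paired with $\ell^p$ and $\ell^q$ test functions; the case where neither $I$ nor $J$ forces a collision is strictly easier, since the logarithmic singularity is absent and standard Young-type estimates suffice.

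The main technical obstacle is to extract the sharp $pq$ dependence in the Schur bound for the pair-collision part: in dimension two the log-singularity sits at the borderline of $L^p$ boundedness, and matching the linear $p$ and $q$ factors requires a careful symmetric splitting of the weight $w_N^{\otimes h}$ between incoming and outgoing test functions, with the two factors emerging from a two-sided Hölder application to the Green-type kernel (essentially the same mechanism that gives the norm of the 2D resolvent on a ball the right $p,q$ dependence). For \eqref{singleton_bound} the partition $I=*$ imposes no constraint on $\bx$, so pairing $\widehat{\sfQ}^{*;I}_{N,0}\, g^{\otimes h}$ against a test function $f\in\ell^{p'}$ decouples the $h-2$ free coordinates into convolutions $q_n * g$, each bounded in $\ell^p$ by $\|g\|_{\ell^p}$, while the two coordinates constrained by $I$ produce a Schur-type factor of $q$ as in \eqref{P_norm_bound}. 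The additional $N^{1/q}$ factor then reflects the fact that, absent a matching weight on the $g^{\otimes h}$ side, the time sum $\sum_{n=1}^N e^{-\lambda n/N}\leq N$ cannot be absorbed into the weights and contributes its full size via Hölder, yielding the stated $N^{1/q}$.
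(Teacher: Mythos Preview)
The paper does not actually prove this proposition; it is quoted from \cite{LZ21} (Propositions~3.2--3.4 and the proof of Theorem~1.3 therein), which in turn refines estimates from \cite{CSZ21} and \cite{DFT94}. Your sketch is broadly aligned with that literature: the split $\widehat\sfP=\widehat\sfQ\,\widehat\sfU$ via sub-multiplicativity, the renewal bound \eqref{Uest} for $\widehat\sfU$, and the HLS-borderline mechanism for the $pq$ dependence are the correct ingredients.

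There is, however, a genuine confusion in your second paragraph. You write that the logarithmic growth of $\sum_{n\le N}q_{2n}(z)$ is ``precisely the singularity balanced against the prefactor $\sigma_N(J)\sim\pi\beta_{k,\ell}/\log N$''. This is not the mechanism: the proposition asserts $\|\widehat\sfP^{I;J}_{N,0}\|_{\ell^q\to\ell^q}\le Cpq$ \emph{uniformly in $N$}, with no $\sigma_N$ in sight. The $\sigma_N$ factors appear only later, in Lemma~\ref{rterm-estimate}, where they are multiplied against $\|\widehat\sfP\|$ and the $1/\log N$ there comes from $\sigma_N$ alone. If you believe $\|\widehat\sfQ^{I;J}\|\sim\log N$ (which is indeed what a na\"ive Schur/column-sum bound gives, since $\sup_{\bx}\sum_{\by}Q^{I;J}_n(\bx,\by)$ summed in $n$ produces $R_N$) and plan to rely on $\sigma_N$ to cancel it, you have not proved the proposition as stated. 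The actual point is that although Schur gives $\log N$, the $\ell^q\to\ell^q$ norm of $\widehat\sfQ^{I;J}$ for $q\in(1,\infty)$ is $O(pq)$ uniformly in $N$: after dominating each $q_n$ by a heat kernel, the $n$-sum collapses to a kernel of order $\big(\sum_{\text{relevant}}|y^{(\cdot)}-x^{(\cdot)}|^2\big)^{-2}$, which is scale-invariant and lies at the endpoint of Hardy--Littlewood--Sobolev in the effective dimension. This is exactly the ``2D resolvent'' mechanism you correctly identify in your third paragraph, and it is the \emph{sole} source of the $N$-uniform bound---not any cancellation with $\sigma_N$. Your sentence that ``applying Schur's test \ldots\ and rescaling $n=sN$ one is left with $\int_0^1\log(1/s)\,ds$'' does not lead anywhere as written; it is the HLS-type inequality, not a Schur test, that delivers the bound.
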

Let us now present the following lemma, which demonstrates how the above functional analytic framework will be used.
This lemma will be useful in the first approximation, that we will perform in the next Section, in showing that contributions from
multiple, i.e. three or more, collisions are negligible.
\begin{lemma}\label{rterm-estimate}
  Let $H_{r,N}$ be the $r^{th}$ term in the expansion \eqref{def:Pexpansion}, that is,
  \begin{equation} \label{def:H}
    H_{r,N}:= \sum_{\circ:=I_0,I_1,\dots,I_r} \prod_{i=1}^r  \sigma_N(I_i)\sumtwo{1 \leq n_1<\dots<n_r\leq N,}{0:=\bx_0 ,\bx_1, \dots, \bx_r \in (\Z^2)^h} \prod_{i=1}^r P^{I_{i-1};I_i}_{n_i-n_{i-1}}(\bx_{i-1},\bx_i)  \, ,
  \end{equation}
  and $H^{\mathsf{(multi)}}_{r,N}$ be
  the corresponding term with the additional constraint that there is at least one multiple collision (i.e. at some point,
  three or more walks meet), that is,
  \begin{equation*} 
    H^{\mathsf{(multi)}}_{r,N}
    := \sum_{\circ:=I_0,I_1,\dots,I_r}\Bigg(\prod_{i=1}^r  \sigma_N(I_i)\Bigg) \ind_{\{\exists\, 1\leq j\leq r \, : \, |I_j|<h-1 \}}\sumtwo{1 \leq n_1<\dots<n_r\leq N}{0:=\bx_0 ,\bx_1, \dots, \bx_r \in (\Z^2)^h} \prod_{i=1}^r P^{I_{i-1};I_i}_{n_i-n_{i-1}}(\bx_{i-1},\bx_i) \, .
  \end{equation*}
  Then the following bounds hold:
  \begin{align}
    H_{r,N} \leq \Big(\frac{C\,p\,q}{\log N} \Big)^r N^{\frac{h+1}{q}}  \qquad
    \text{and} \qquad
    H^{\mathsf{(multi)}}_{r,N}\leq \frac{r}{\log N} \Big(\frac{C\,p\,q}{\log N} \Big)^r N^{\frac{h+1}{q}}  \label{Hbound2}.
  \end{align}
  for any $p,q\in(1,\infty)$ with $\tfrac{1}{p}+\tfrac{1}{q}=1$ and a constant $C$ that depends on $h$ and $ \bar\beta$ but
  is independent of $N,r, p,q$.
\end{lemma}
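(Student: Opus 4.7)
The plan is to recast $H_{r,N}$ as a bilinear pairing against an iterated product of the Laplace operators $\widehat{\sfP}^{I_{i-1};I_i}_{N,0}$ and then invoke the operator-norm bound \eqref{P_norm_bound}. The first step is to drop the time ordering in the sum: since $1\leq n_1<\cdots<n_r\leq N$ forces each increment $m_i:=n_i-n_{i-1}$ to lie in $\{1,\dots,N\}$, one can bound the ordered sum by the unconstrained product $\prod_{i=1}^r\sum_{m_i=1}^N P^{I_{i-1};I_i}_{m_i}(\bx_{i-1},\bx_i)$. Next, I insert the telescoping weight identity
\begin{equation*}
\prod_{i=1}^r P^{I_{i-1};I_i}_{m_i}(\bx_{i-1},\bx_i) = \frac{w_N^{\otimes h}(\bx_r)}{w_N^{\otimes h}(\bx_0)} \prod_{i=1}^r \Big[P^{I_{i-1};I_i}_{m_i}(\bx_{i-1},\bx_i)\,\frac{w_N^{\otimes h}(\bx_{i-1})}{w_N^{\otimes h}(\bx_i)}\Big],
\end{equation*}
so that after summing each $m_i$ from $1$ to $N$ the bracketed factor is dominated by the kernel of $\widehat{\sfP}^{I_{i-1};I_i}_{N,0}$, as defined in \eqref{LaplaceP}--\eqref{operators}.

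Since $\bx_0=\boldsymbol{0}$ and $w_N(\boldsymbol{0})=w(0)$ is a strictly positive constant, summing over the intermediate $\bx_1,\dots,\bx_r$ produces the bilinear pairing
\begin{equation*}
\bigl\langle \delta_{\boldsymbol{0}},\,\widehat{\sfP}^{I_0;I_1}_{N,0}\cdots\widehat{\sfP}^{I_{r-1};I_r}_{N,0}\,w_N^{\otimes h}\bigr\rangle,
\end{equation*}
with $\delta_{\boldsymbol{0}}\in\ell^p((\Z^2)^h_\circ)$ of unit norm and $w_N^{\otimes h}\in\ell^q((\Z^2)^h_{I_r})$. Hölder's inequality, combined with the composition of the operator-norm estimate \eqref{P_norm_bound} at each of the $r$ factors, bounds this pairing by $(Cpq)^r\,\|w_N^{\otimes h}\|_{\ell^q_{I_r}}$. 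The hypothesis of \eqref{P_norm_bound} is satisfied along the whole chain: $I_{i-1}\neq I_i$ by the grouping of consecutive replica evolutions in the construction of \eqref{def:Pexpansion}, while $|I_0|=1\leq h-1$ and $|I_i|\leq h-1$ for $i\geq 1$ since each $I_i$ records at least one coincidence. A direct Riemann-sum calculation, using $w_N(x)=w(x/\sqrt N)$ with $w\in L^\infty(\R^2)\cap L^1(\R^2)$, yields $\|w_N^{\otimes h}\|_{\ell^q_{I_r}}\leq C\,N^{|I_r|/q}\leq C\,N^{(h+1)/q}$ since $|I_r|\leq h-1$.

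It remains to control the collision weights and to sum over the (finite) collection of partition chains. Every partition $I$ with $|I|\leq h-1$ identifies at least one pair $i\stackrel{I}{\sim}j$, so $\sigma_N(I)\leq \sigma_N(\bar\beta)\leq C/\log N$ by \eqref{sigmaR}, giving $\prod_{i=1}^r\sigma_N(I_i)\leq (C/\log N)^r$; since the number of partition chains of length $r+1$ starting at $\circ$ is bounded by the $r$-th power of the Bell number $B(h)$, the sum over chains is absorbed into a redefinition of $C$, yielding the first bound in \eqref{Hbound2}. For the refinement $H^{\mathsf{(multi)}}_{r,N}$, the key observation is that whenever $|I_j|<h-1$ for some index $j$, the partition $I_j$ identifies at least two distinct pairs, so $\sigma_N(I_j)\leq (C/\log N)^2$ rather than $C/\log N$; a union bound over the $r$ possible positions of the multi-collision then produces the extra factor $r/\log N$. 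The only subtle point, rather than a genuine obstacle, is the careful bookkeeping of the telescoping weights and the verification that the partition chain $I_0,\dots,I_r$ satisfies the hypotheses of Proposition \ref{operator_norm_bound}; once this is in place, the proof reduces to a mechanical combination of Hölder's inequality and composition of operator norms.
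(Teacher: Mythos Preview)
Your argument tracks the paper's very closely: insert the telescoping weight, pass to the Laplace operators \eqref{LaplaceP}, then iterate the operator-norm bound \eqref{P_norm_bound} and collect the $\sigma_N(I_i)$ factors. The one place where you diverge is the treatment of the \emph{first} factor in the chain. The paper does not apply \eqref{P_norm_bound} to $\widehat{\sfP}^{I_0;I_1}_{N,0}$; instead it relaxes $I_0$ from $\circ$ to $*$ by encoding $\bx_0=\boldsymbol{0}$ through the delta function $\delta_0^{\otimes h}$ on all of $(\Z^2)^h$, and then invokes the dedicated estimate \eqref{singleton_bound} for $\widehat{\sfQ}^{*;I_1}_{N,0}$ acting on $\delta_0^{\otimes h}/w_N^{\otimes h}$, which produces the factor $N^{1/q}$ (the remaining $N^{h/q}$ coming from $\|w_N\|_{\ell^q}^h$).

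Your route---keeping $I_0=\circ$ and using \eqref{P_norm_bound} for all $r$ factors---is tidier and would even sharpen the $N$-exponent, but it has a gap: Proposition~\ref{operator_norm_bound} requires $I\neq J$, and your justification ``$I_{i-1}\neq I_i$ by the grouping of consecutive replica evolutions'' only covers the case $|I_i|=h-1$, since the replica grouping (and the operator $\sfU^I$) is defined \emph{only} for pair partitions. When $I_1=\circ$ (all $h$ walks meeting at once, a genuine multi-collision with $|I_1|=1<h-1$ for $h\geq 3$) you have $I_0=I_1$ and \eqref{P_norm_bound} is unavailable; in the degenerate case $h=2$ this is fatal, since $\circ$ is the \emph{only} admissible collision partition and so $I_0=I_1$ always. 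The paper's device of passing to $I_0=*$ guarantees $I_0\neq I_1$ automatically (because $|*|=h>h-1\geq |I_1|$) and is precisely why the separate bound \eqref{singleton_bound} is stated and used here.
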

\begin{proof}
  We start by considering $w(x)=e^{-|x|}, w_N(x):=w(\tfrac{x}{\sqrt{N}})$ and $w_N^{\otimes h}(x_1,...,x_h)=\prod_{i=1}^h w_N(x_i)$
  and by including in the expression \eqref{def:H} the term
  $$\frac{1}{w_N^{\otimes h}(\bx_0)} \Big(\prod_{i=1}^r \frac{w_N^{\otimes h}(\bx_{i-1})}{w_N^{\otimes h}(\bx_i)} \Big)w_N^{\otimes h}(\bx_r)
    =1,$$
  thus rewriting $H_{r,N}$ as
  \begin{equation*} 
    \begin{split}
      H_{r,N}
      &=   \sum_{\circ:=I_0,I_1,\dots,I_r} \prod_{i=1}^r  \sigma_N(I_i)  \\
      &\times  \sumtwo{1 \leq n_1<\dots<n_r \leq N}{0:=\bx_0 ,\bx_1, \dots, \,\bx_r  \in (\Z^2)^h}
      \frac{1}{w_N^{\otimes h}(\bx_0)} \prod_{i=1}^r P^{I_{i-1};I_i}_{n_i-n_{i-1}}(\bx_{i-1},\bx_i)
      \frac{w_N^{\otimes h}(\bx_{i-1})}{w_N^{\otimes h}(\bx_i)}  \cdot
      w_N^{\otimes h}(\bx_r)\, .
    \end{split}
  \end{equation*}
  We can extend the summation on $\bx_0$ from $\bx_0=0$ to $\bx_0\in\Z^2$ by introducing a delta function $\delta_0^{\otimes h}$
  at zero. Then
  \begin{equation*}
    \begin{split}
      H_{r,N}
      &=   \sum_{*:=I_0,I_1,\dots,I_r} \prod_{i=1}^r  \sigma_N(I_i)  \\
      &\times  \sumtwo{1 \leq n_1<\dots<n_r \leq N}{\bx_0 ,\bx_1, \dots, \,\bx_r \in (\Z^2)^h}
      \frac{\delta_0^{\otimes h}(\bx_0)}{w_N^{\otimes h}(\bx_0)} \prod_{i=1}^r P^{I_{i-1};I_i}_{n_i-n_{i-1}}(\bx_{i-1},\bx_i)
      \frac{w_N^{\otimes h}(\bx_{i-1})}{w_N^{\otimes h}(\bx_i)}  \cdot
      w_N^{\otimes h}(\bx_r)\, .
    \end{split}
  \end{equation*}
  We can, now, bound the last expression by extending the temporal range of summations
  from $1\leq n_1<\dots<n_r\leq N$ to $n_i-n_{i-1} \in \{1,\dots, N\}$ for all $i=1,...,r$. Recalling the definition of the Laplace transforms of the
  operators \eqref{operators}, \eqref{LaplaceP}, we, thus, obtain the upper bound
  \begin{equation*}
    H_{r,N}
    \leq   \sum_{*:=I_0,I_1,\dots,I_r}  \prod_{i=1}^r  \sigma_N(I_i)
    \sum_{\bx_0 ,\bx_1, \dots, \,\bx_r \in (\Z^2)^h}
    \frac{\delta_0^{\otimes h}(\bx_0)}{w_N^{\otimes h}(\bx_0)} \prod_{i=1}^r \widehat \sfP^{I_{i-1};I_i}_{N,0}(\bx_{i-1},\bx_i)
    \cdot  w_N^{\otimes h}(\bx_r)\, ,
  \end{equation*}
  which we can write in the more compact and useful notation, using the brackets \eqref{pairing}, as
  \begin{equation*}
    H_{r,N}  \leq \sum_{I_1,\dots,I_r} \Big\langle \, \frac{\delta_0^{\otimes h}}{w_N^{\otimes h}} , \widehat{\sfQ}^{*;I_1}_{N,0} \widehat{\sfP}^{I_1;I_2}_{N,0} \cdots \widehat{\sfP}^{I_{r-1};I_r}_{N,0} \, w^{\otimes h}_N\Big\rangle  \prod_{i=1}^r \sigma_N(I_i) .
  \end{equation*}
  We note, here, that in the right-hand side we set the $I_0$ partition to be equal to $I_0=\{1\}\sqcup\cdots \sqcup \{h\}$. 
  In this case $\widehat{\sfP}^{I_{0};I_1}=\widehat{\sfP}^{*;I_1} =\widehat{\sfQ}^{*;I_1} $ by definition \eqref{LaplaceP}. 
  The
  delta function $\delta_0^{\otimes h}(\bx_0)$ will force all points of $\bx_0$ to coincide at zero, thus, forcing $I_0$ to be equal to
  the partition $\circ=\{1,...,h\}$ but, at the stage of operators, we do not yet need to enforce this constraint.
  At this stage we can proceed with the estimate using the operator norms \eqref{op_norm} as
  \begin{equation} \label{bound_by_pairing}
    H_{r,N}
    \leq \sum_{I_1,\dots,I_r} \norm{ { \frac{\delta^{\otimes h}_0}{w^{\otimes h }_N} \, \widehat{\sfQ}^{*,I_1}_{N,0}} }_{\ell^p} \, \prod_{i=2}^r\,  \norm{\widehat{\sfP}_{N,0}^{I_{i-1};I_i}}_{\ell^q \to \ell^q} \, \norm{ w_N^{\otimes h}}_{\ell^q } \,
    \, \cdot \,  \prod_{i=1}^r \sigma_N(I_i),
  \end{equation}
  By \eqref{singleton_bound} of Proposition \ref{operator_norm_bound} we have that
  \begin{equation*}
    \norm{{  \frac{\delta^{\otimes h}_0}{w^{\otimes h }_N} \, \widehat{\sfQ}^{*,I_1}_{N,0}} }_{\ell^p}
    \leq C\,q\, N^{\frac{1}{q}} \norm{\frac{\delta_0}{w_N}}^{h}_{\ell^p}=C\, q\, N^{\frac{1}{q}},
  \end{equation*}
  and by \eqref{P_norm_bound} we have that for all $1\leq i \leq r-1$,
  \begin{equation*}
    \norm{\widehat\sfP_{N,0}^{I_{i-1};I_{i}}}_{\ell^q \to \ell^q} \leq C \, p \, q \, .
  \end{equation*}
  Inserting these estimates in  \eqref{bound_by_pairing} we deduce that
  \begin{align} \label{last_bound}
    H_{r,N}
     & \leq (C\,p\,q)^r N^{\frac{1}{q}}
    \norm{\frac{\delta_0}{w_N}}^h_{\ell^p} \norm{w_N}_{\ell^q}^{h}  \sum_{I_1,\dots,I_r} \prod_{i=1}^r\sigma_N(I_i) \notag \\
     & = (C\,p\,q)^r N^{\frac{1}{q}} \norm{w_N}_{\ell^q}^{h}  \sum_{I_1,\dots,I_r} \prod_{i=1}^r\sigma_N(I_i) \, ,
  \end{align}
  for a constant $C=C(h,\bar{\beta})\in (0,\infty)$, not depending on $p,q, r, N$.
  We now notice that for any partition $I\vdash\{1,...,h\}$, it holds that $\sigma_N(I)\leq C/ \log N$
  (recall definitions \eqref{def:multi-sigma} and  \eqref{def:sigma}), so
  \begin{align*}
   \sum_{I_1,\dots,I_r} \prod_{i=1}^r\sigma_N(I_i) \leq \Big(\frac{2^h  C}{\log N}\Big)^r.
  \end{align*}
  
  Moreover,  by Riemann summation,
  $N^{-h/q}\norm{w_N}_{\ell^q}^{h}$ is bounded uniformly in $N$. Therefore, applying these on \eqref{last_bound} we arrive at the bound
  \begin{align*} 
    H_{r,N} \leq \Big(\frac{C\,p\,q}{\log N} \Big)^r N^{\frac{h+1}{q}},
  \end{align*}
  for a new constant $C=C(h,\bar{\beta})\in (0,\infty)$, which is the first claimed estimate in \eqref{Hbound2}.
  For the second estimate in \eqref{Hbound2} we follow
  the same steps until we arrive at the bound
  \begin{equation*} 
    H^{\mathsf{(multi)}}_{r,N} \leq (C\,p\,q)^r N^{\frac{1}{q}}
    \norm{w_N}_{\ell^q}^{h} \sum_{I_1,\dots,I_r}\prod_{i=1}^r  \sigma_N(I_i) \ind_{\{\exists\, 1\leq j\leq r:|I_j|<h-1\}}.
  \end{equation*}
  Then we notice that for a partition $I\vdash\{1,...,h\}$ with $|I|<h-1$ it will hold that $\sigma_N(I)\leq C (\log N)^{-2}$
  (recall definitions \eqref{def:multi-sigma} and \eqref{def:sigma}).
  This fact, together with the fact that there are $r$ possible choices among the partitions $I_1,...,I_r$  that can be chosen
  so that $|I_j|<h-1$, leads to the second bound in \eqref{Hbound2}.
\end{proof}

\section{Approximation steps and proof of the theorem} \label{approximation_steps}
In this section we prove Theorem \ref{main_result} through a series of approximations on the chaos expansion
\eqref{def:expansion}, \eqref{def:Pexpansion}. The first step, in Section \ref{step_1}, is to establish that the
series in the chaos expansion \eqref{def:Pexpansion} can be truncated up to a finite order and that the main contribution comes from
diagrams where, at any fixed time, we only have at most two walks colliding. The second step, Section \ref{step_2},
is to show that the main contribution to the expansion and to diagrams like in Figure \ref{fig:scales}, comes
when all jumps between marked dots (see  Figure \ref{fig:scales}) happen within diffusive scale.
The third step, in Section \ref{step_3}, captures the important feature of {\it scale separation}.
This is intrinsic to the two-dimensionality and can be seen as the main feature that leads to the
asymptotic independence of the collision times.
With reference to Figure \ref{fig:scales}, this says that the time between two consecutive replicas, say $a_4-b_3$ in
Figure \ref{fig:scales} must be much larger than the time between the previous replicas, say $b_3-b_2$.
This would then lead to the next step in Section \ref{step_5}, see also Figure \ref{fig:rewiring}, which is that
we can {\it rewire} the links so that the solid lines connect only replicas between the {\it same} pairs of walks.
The final step, which is performed in Section \ref{sec:finalstep} is to reverse all the above approximations within the
rewired diagrams, to which we arrived in the previous step. The summation, then, of all rewired diagrams leads,
in the limit, to the right hand of \eqref{thmeq:laplace2}, thus completing the proof of the theorem.

\subsection{Reduction to 2-body collisions and finite order chaoses.} \label{step_1}

In this step, we use the functional analytic framework and estimates of the previous section
to show that for each $r \geq 1$, $H_{r,N}$ decays exponentially in $r$, uniformly in $N \in \N$ and that it
is concentrated on configurations which contain only two-body collisions between the $h$ random walks.

\begin{proposition} \label{fixed_deg_bounds}
  There exist constants $\sfa \in (0,1)$ and $\bar{C}=C(h,\bar{\beta},\sfa) \in (0,\infty)$ such that
  for all $r \geq 1$,
  \begin{equation} \label{app_1}
    \sup_{N \in \N} H_{r,N}  \leq \bar{C} \, \sfa^r \, ,\qquad \text{and} \qquad
    H^{\mathsf{(multi)}}_{r,N}
    \leq \frac{\bar{C}}{\log N}   \, r\, \sfa^r   \, .
  \end{equation}
\end{proposition}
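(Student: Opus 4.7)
The plan is to apply Lemma \ref{rterm-estimate} with the conjugate exponents $(p,q)$ chosen as functions of $N$, tuning $q$ so that the prefactor $N^{(h+1)/q}$ stays bounded while the geometric base $Cpq/\log N$ remains strictly less than one. The correct calibration is $q=q_N:=M\log N$ for a small constant $M>0$ to be fixed; then $p_N=q_N/(q_N-1)\to 1$, so $p_Nq_N = M\log N\,(1+o(1))$ and
\begin{equation*}
\frac{C\,p_Nq_N}{\log N}\xrightarrow[N\to\infty]{} CM,\qquad N^{(h+1)/q_N}=\exp\!\bigl(\tfrac{h+1}{M}\bigr).
\end{equation*}

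Fixing $M$ small enough that $CM<1$ (for instance $M=1/(2C)$) and then picking any $\sfa\in(CM,1)$, the first bound of Lemma \ref{rterm-estimate} immediately delivers $H_{r,N}\le \bar C\,\sfa^{r}$ for every $r\ge 1$ and every sufficiently large $N$, with $\bar C=\bar C(h,\bar\beta,\sfa)$ absorbing the constant prefactor $e^{(h+1)/M}$ together with the $o(1)$ corrections (coming from $p_N\to 1$ and $\sigma_NR_N\to\beta$). The finitely many remaining small values $N<N_0$ are absorbed into $\bar C$: for each such fixed $N$ the full series \eqref{def:Pexpansion} converges (e.g.\ by combining the crude bounds \eqref{crude_bounds} with the two-body estimate \eqref{Uest} term by term in \eqref{def:H}), so $H_{r,N}$ is trivially bounded uniformly in $r$. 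The multi-collision bound follows by exactly the same calibration applied to the second inequality in \eqref{Hbound2}: the extra factor $r/\log N$ is purely $N$-dependent and passes unchanged through the optimisation, producing $H^{\mathsf{(multi)}}_{r,N}\le (\bar C/\log N)\,r\,\sfa^{r}$.

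The only conceptual point is the scaling $q\sim\log N$: any slower growth would let $N^{(h+1)/q}$ blow up, while any faster growth would spoil the geometric decay through $pq/\log N\to\infty$. I do not foresee any genuine obstacle, since the precise $p,q$-dependence of both bounds has already been made explicit in Lemma \ref{rterm-estimate}, and the present proposition is essentially its one-parameter corollary.
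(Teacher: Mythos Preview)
Your approach is essentially the same as the paper's: both plug the choice $q=q_N\sim \text{const}\cdot\log N$ into the bounds \eqref{Hbound2} of Lemma~\ref{rterm-estimate}, so that $N^{(h+1)/q}$ becomes an $N$-independent constant while $Cpq/\log N$ converges to a number strictly below one. The paper writes $q_N=\tfrac{\sfa}{C_1}\log N$ and you write $q_N=M\log N$, which is the same calibration up to notation; your extra remark about absorbing the finitely many small values of $N$ into $\bar C$ is a detail the paper leaves implicit.
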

\begin{proof}
  We use the estimates in \eqref{Hbound2} and make the choice
  $q=q_N:=\frac{\sfa}{C_1} \log N $ with $\sfa \in (0,1)$ and a constant $C_1$
  such that $\frac{Cpq}{\log N} <\sfa$ (recall that $\tfrac{1}{p}+\tfrac{1}{q}=1$).
  Moreover, this choice of $q$ implies that
  \begin{equation*}
    N^{\frac{h+1}{q}}=e^{\frac{h+1}{q} \log N}=e^{\frac{C_1\, (h+1)}{\sfa}} .
  \end{equation*}
  Therefore, choosing $\bar{C}=\, e^{\frac{C_1\, (h+1)}{\sfa}}$ implies the first estimate in \eqref{app_1}.

  The second estimate follows from the same procedure and the same choice of $q=q_N:=\frac{\sfa}{C_1} \log N $ in the second
  bound of \eqref{Hbound2}.
\end{proof}

\begin{proposition} \label{finite_replicas}
  If $M^{\boldsymbol{\beta}}_{N,h}$ is the joint Laplace transform 
  of the collision local times $ \Big\{ \tfrac{\pi}{\log N}  \,\sfL_N^{(i,j)} \Big\}_{1\leq i<j\leq h} $, \eqref{M_N,h}
  and $H_{r,N}$ is the $r^{th}$ term
  in its chaos expansion \eqref{def:H}, then for any $\epsilon>0$ there exists $K=K_\epsilon$ such that
  \begin{equation*}
    \Big|M^{\boldsymbol{\beta}}_{N,h}-\sum_{r=0}^K H_{r,N}\Big|\leq \epsilon \, ,
  \end{equation*}
  uniformly for all $N \in \N$.
\end{proposition}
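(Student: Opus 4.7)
The plan is essentially to exploit the geometric tail bound from Proposition \ref{fixed_deg_bounds}. The chaos expansion \eqref{def:Pexpansion} identifies
\[
  M^{\boldsymbol{\beta}}_{N,h} \;=\; 1 + \sum_{r\ge 1} H_{r,N},
\]
so, setting $H_{0,N}:=1$, the truncation error at level $K$ equals the non-negative tail
\[
  M^{\boldsymbol{\beta}}_{N,h} - \sum_{r=0}^K H_{r,N} \;=\; \sum_{r>K} H_{r,N}.
\]
(All summands in \eqref{def:H} are non-negative since $\beta_{i,j}\in(0,1)$ forces $\sigma_N^{i,j}>0$, and the $Q^{I;J}_n$, $\sfU^J_n$ are non-negative kernels; hence the expansion converges absolutely to $M^{\boldsymbol{\beta}}_{N,h}$ by monotone convergence applied to the product-expansion \eqref{def:expansion}.)

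The first step is to record that Proposition \ref{fixed_deg_bounds} yields $H_{r,N}\le \bar C\, \sfa^r$ with $\sfa\in(0,1)$, \emph{uniformly} in $N\in\N$. Summing the geometric series gives
\[
  \sum_{r>K} H_{r,N} \;\le\; \bar C \sum_{r>K}\sfa^r \;=\; \frac{\bar C\,\sfa^{K+1}}{1-\sfa},
\]
again uniformly in $N$. Given $\eps>0$, it then suffices to choose $K=K_\eps$ large enough that $\bar C\,\sfa^{K+1}/(1-\sfa)<\eps$, and the desired bound $\bigl|M^{\boldsymbol{\beta}}_{N,h} - \sum_{r=0}^K H_{r,N}\bigr|\le \eps$ holds for every $N$.

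There is no serious obstacle: the statement is really just the uniform summability afforded by Proposition \ref{fixed_deg_bounds}. The only point requiring a brief justification is the unconditional identification of $M^{\boldsymbol{\beta}}_{N,h}$ with $\sum_{r\ge 0} H_{r,N}$; this is immediate from \eqref{def:expansion}--\eqref{def:Pexpansion} together with positivity of all terms, which allows the unrestricted interchange of expectation, sums over collision times/locations, and the regrouping into replica evolutions performed in \eqref{def:Pop}.
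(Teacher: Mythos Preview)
Your proposal is correct and takes essentially the same approach as the paper: both invoke the uniform exponential bound $H_{r,N}\le \bar C\,\sfa^r$ from Proposition~\ref{fixed_deg_bounds} and sum the geometric tail to conclude that $\sup_{N}\sum_{r>K}H_{r,N}\to 0$ as $K\to\infty$. Your write-up is in fact more careful than the paper's, spelling out positivity and the identification of $M^{\boldsymbol{\beta}}_{N,h}$ with the full series, whereas the paper simply states the conclusion in one line.
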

\begin{proof}
  By Proposition, \ref{fixed_deg_bounds}, $H_{r,N}$ decay exponentially in $r$, uniformly in $N \in \N$ and therefore
  \begin{equation*}
    \limsup_{K \to \infty}\, \Big(\sup_{N\geq 1} \,\sum_{r>K} H_{r,N}\Big)=0 \, ,
  \end{equation*}
  which means that we can truncate the expansion of $M^{\boldsymbol{\beta}}_{N,h}$ to a finite number of terms $K$ depending
  only on $\epsilon$.
\end{proof}

By Proposition \ref{fixed_deg_bounds} we can focus on only two-body collisions, since higher order collisions bear a negligible contribution as $N\to \infty$.
Let us introduce some notation to conveniently describe the expansion of $H_{r,N}$, after the reduction to only two-body collisions,
which we will use in the sequel. Given $r\geq 1$ we will denote by $a_i,b_i \in \N \cup \{0\}$, $a_i\leq b_i$, $i=1,\dots,r$ the times where replicas start and end respectively, see \eqref{replica_op} and Figure \ref{fig:expansion2}, where replicas are represented by wiggle lines.
Thus, $a_i$ will be the time marking the beginning of the $i^{th}$ wiggle line and $b_i$ the time marking its end. Note that, $a_1=0$.
Moreover, we use the notation $\vec{\bx}=(\bx_1,\bx_2,\dots,\bx_r) \in (\Z^{2})^{hr}$ to denote the starting points of the $r$ replicas and $\vec{\by}=(\by_1,\dots,\by_r) \in (\Z^{2})^{hr}$ the corresponding ending points. Again, notice that $\bx_1=\boldsymbol{0}$. We then define the set
\begin{equation} \label{C_set}
  \sfC_{r,N}:=\Big\{ (\vec{a},\vec{b},\vec{\bx},\vec{\by})\Big|\,  0:=a_1\leq b_1<a_2\leq \dots < a_r\leq b_r \leq N, \, \vec{\bx},\vec{\by} \in (\Z^2)^{hr}  \, ,\bx_1=\boldsymbol{0}   \Big\} \, .
\end{equation}
We also define a set of finite sequences of partitions
\begin{equation*} 
  \mathcal{I}^{(2)}=\bigcup_{r=0}^{\infty} \bigg\{(I_1,\dots,I_r) : I_j \neq I_{j+1} \text{ and } |I_j|=h-1\, ,\forall j \in \{1,\dots,r\} \bigg\} \, .
\end{equation*}
Using the notational conventions outlined above we can write
$ H_{r,N}= H^{(2)}_{r,N}+H^{\mathsf{(multi)}}_{r,N}$ with
\begin{align}\label{approx_1}
  H^{(2)}_{r,N} & :=\hspace{-0.1cm}
  \sum_{(I_1,...,I_r) \,\in\, \mathcal{I}^{(2)}}\sum_{(\vec{a},\vec{b},\vec{\bx},\vec{\by})\, \in \,\sfC_{r,N}} \sfU^{I_1}_{b_1}(0,\by_1) \prod_{i=2}^r Q_{a_{i}-b_{i-1}}^{I_{i-1};I_i}(\by_{i-1},\bx_i)  \sfU^{I_i}_{b_i-a_i}(\bx_i,\by_i) \sigma_N(I_i).
\end{align}
In the next sections will focus on  $H^{(2)}_{r,N}$, which by Proposition  \ref{fixed_deg_bounds}
contains the main contributions.

\subsection{Diffusive spatial truncation.}\label{step_2}

In this step we show that we can introduce diffusive spatial truncations in all the kernels appearing in \eqref{approx_1} which originate from the diffusive behaviour of the simple random walk in $\Z^2$. For a vector $\bx=(x^{(1)},\dots,x^{(h)}) \in (\Z^2)^h$, we shall use the notation
\begin{equation*} 
  \norm{\bx}_{\infty}=\max_{1\leq j \leq h} |x^{(j)}| \, ,
\end{equation*}
where $|\cdot|$ denotes the usual Euclidean norm on $\R^2$.
For each $r \in \N$, define $H^{\mathsf{(diff)}}_{r,R,N}$ to be the sum in \eqref{approx_1}
where $\sfC_{r,N}$ is replaced by
\begin{equation} \label{C_dif_set}
  \begin{split}
    \sfC^{\mathsf{(diff)}}_{r,N,R}:=\sfC_{r,N} \cap \Big\{(\vec{a},\vec{b},\vec{\bx},\vec{\by}): \,& \norm{\by_i-\bx_i}_{\infty}\leq R \sqrt{b_i-a_i} \\
    &\text{  and   } \norm{\bx_i-\by_{i-1}}_{\infty}\leq R \sqrt{a_i-b_{i-1}} \text{ for all } 1\leq i\leq r \Big\} \,
  \end{split}
\end{equation}
and similarly we define
\begin{equation} \label{H_sdiff}
  H^{\mathsf{(superdiff)}}_{r,N,R}
  = \hspace{-0.25cm}\sum_{(I_1,...,I_r) \,\in\, \mathcal{I}^{(2)}} \sum_{(\vec{a},\vec{b},\vec{\bx},\vec{\by})\, \in \,  \sfC^{\mathsf{(superdiff)}}_{r,N,R}
  } \sfU^{I_1}_{b_1}(0,\by_1) \prod_{i=2}^r Q_{a_{i}-b_{i-1}}^{I_{i-1};I_i}(\by_{i-1},\bx_i) \,  \sfU^{I_i}_{b_i-a_i}(\bx_i,\by_i) \,\sigma_N(I_i)  ,
\end{equation}
where
\begin{equation*} 
  \begin{split}
    \sfC^{\mathsf{(superdiff)}}_{r,N,R}
    :=\sfC_{r,N} \cap \Big\{(\vec{a},\vec{b},\vec{\bx},\vec{\by}): \, \exists \,1\leq i\leq r: \,& \norm{\by_i-\bx_i}_{\infty}> R \sqrt{b_i-a_i} \\
    &\text{  or   } \norm{\bx_i-\by_{i-1}}_{\infty}> R \sqrt{a_i-b_{i-1}}  \,\Big\} \, .
  \end{split}
\end{equation*}
Note that then we have that
\begin{equation*}
  H^{(2)}_{r,N}=H^{\mathsf{(diff)}}_{r,N,R}+H^{\mathsf{(superdiff)}}_{r,N,R} \, .
  \end{equation*}
We have the following Proposition.
\begin{proposition} \label{dif_proposition}
  For all $r\geq 1$ we have that
  \begin{equation} \label{dif_trunc}
    \lim_{R \to \infty} \sup_{N \in \N} H^{\mathsf{(superdiff)}}_{r,N,R}
    =0 \, .
  \end{equation}
\end{proposition}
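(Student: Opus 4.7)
The strategy is a union bound on which of the inter-dot jumps in a diagram with $r$ replicas is responsible for the superdiffusive excursion, combined with Gaussian tail estimates for each individual kernel and with the operator bounds of Proposition \ref{operator_norm_bound}. Concretely, there are $r$ replica jumps $\bx_i\to\by_i$ in time $b_i-a_i$ (with $\bx_1=0$ in the first one) and $r-1$ free jumps $\by_{i-1}\to\bx_i$ in time $a_i-b_{i-1}$. One writes
\[
H^{\mathsf{(superdiff)}}_{r,N,R}\;\leq\;\sum_{i=1}^r H^{\mathsf{(sd\text{-}rep)},i}_{r,N,R}\;+\;\sum_{i=2}^r H^{\mathsf{(sd\text{-}free)},i}_{r,N,R},
\]
where each summand is defined like \eqref{H_sdiff} but with the superdiffusive constraint localised to a single prescribed jump.

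For each of these terms the plan is to isolate a small factor $\varepsilon(R)$ coming from the superdiffusive kernel and to control the remaining convolutions by the operator machinery. For the free kernel $Q^{I;J}_n$ this is the standard Gaussian tail for the simple random walk: using $q_n(y)\leq C n^{-1}e^{-c|y|^2/n}$ (for $|y|\leq n$) together with a Cauchy--Schwarz step to deal with the coincidence $y^{(k)}=y^{(\ell)}$ enforced by $\by\sim J$, one obtains
\[
\sum_{\substack{\by\sim J\\\|\by-\bx\|_\infty>R\sqrt n}}\prod_{i=1}^h q_n(y^{(i)}-x^{(i)})\;\leq\;C\,e^{-cR^2}\sum_{\by\sim J}\prod_{i=1}^h q_n(y^{(i)}-x^{(i)}),
\]
uniformly in $\bx,n,N$. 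For the replica kernel $\sfU^I_n$ with $I=\{k,\ell\}\sqcup\bigsqcup_{j\ne k,\ell}\{j\}$, one handles the $h-2$ free walks by the same tail bound; for the distinguished pair one appeals to the renewal representation \eqref{ren_rep}, writing
\[
U^{\beta_{k,\ell}}_N(n,z)=\sum_{m\ge 0}(\sigma_N R_N)^m\,\P\bigl(\tau^{\scriptscriptstyle(N)}_m=n,\,S^{\scriptscriptstyle(N)}_m=z\bigr),
\]
and using that, conditional on $\tau^{\scriptscriptstyle(N)}_m=n$, the displacement $S^{\scriptscriptstyle(N)}_m$ has diffusive spread of order $\sqrt n$ (since each step $X^{\scriptscriptstyle(N)}_i$ has marginal $q^2_{T^{\scriptscriptstyle(N)}_i}(\cdot)/R_N$, which is Gaussian at scale $\sqrt{T^{\scriptscriptstyle(N)}_i}$). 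This yields $\sum_{|z|>R\sqrt n}U^{\beta_{k,\ell}}_N(n,z)\leq \varepsilon_U(R)\,U^{\beta_{k,\ell}}_N(n)$ with $\varepsilon_U(R)\to 0$ as $R\to\infty$, uniformly in $n\leq N$ and $N$.

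Once the superdiffusive factor is extracted, the rest of the diagram is a convolution of $r-1$ replicas and $r-2$ free evolutions (together with the starting delta at the origin), which, by exactly the same sequence of manipulations performed in the proof of Lemma \ref{rterm-estimate} --- insertion of the telescoping weight $w_N^{\otimes h}$, passage to Laplace transforms $\widehat\sfP^{I;J}_{N,0}$, and use of Proposition \ref{operator_norm_bound} with the same choice $q_N=(\sfa/C_1)\log N$ --- is bounded uniformly in $N$ by a constant $C_r$ depending only on $r,h,\bar\beta$. Multiplying by $\varepsilon(R)=C(e^{-cR^2}\vee\varepsilon_U(R))$ and summing over the $2r-1$ choices of the offending jump gives
\[
\sup_{N\in\N}H^{\mathsf{(superdiff)}}_{r,N,R}\;\leq\;(2r-1)\,C_r\,\varepsilon(R)\;\xrightarrow[R\to\infty]{}\;0,
\]
as required.

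The main obstacle is the tail bound for the replica kernel $U^{\beta}_N(n,z)$, because one is summing over paths of the renewal process $(\tau^{\scriptscriptstyle(N)}_m,S^{\scriptscriptstyle(N)}_m)$ with an explosive prefactor $(\sigma_N R_N)^m\to\beta^m$ that for $m$ large gives a series only borderline summable; one must therefore combine the spatial Gaussian tail of $S^{\scriptscriptstyle(N)}_m$ with the renewal estimates of \cite{CSZ19a} (in particular, that $\sum_m(\sigma_NR_N)^m\P(\tau^{\scriptscriptstyle(N)}_m\leq N)\leq(1-\bar\beta\,')^{-1}$, see \eqref{Uest}) in such a way that the $e^{-cR^2}$ factor survives uniformly in $m$. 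Everything else reduces to the Gaussian tail for the simple random walk kernel and to the already established operator norm bounds.
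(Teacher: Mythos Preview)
Your union-bound strategy and the idea of isolating a small factor from the offending kernel are exactly what the paper does. However, the multiplicative estimate you claim for the constrained free evolution,
\[
\sum_{\substack{\by\sim J\\\|\by-\bx\|_\infty>R\sqrt n}}\prod_{i=1}^h q_n(y^{(i)}-x^{(i)})\;\leq\;C\,e^{-cR^2}\sum_{\by\sim J}\prod_{i=1}^h q_n(y^{(i)}-x^{(i)}),
\]
is \emph{false} uniformly in $\bx$. With $J=\{k,\ell\}$ and $|x^{(k)}-x^{(\ell)}|\gg R\sqrt n$, the unconstrained right side equals $q_{2n}(x^{(k)}-x^{(\ell)})$ and the Gaussian mass concentrates near the midpoint of $x^{(k)},x^{(\ell)}$, which already lies in $\{|y-x^{(k)}|>R\sqrt n\}$; the ratio of the two sides is then of order one, not $e^{-cR^2}$. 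So you cannot factor out $\varepsilon(R)$ and keep the remaining diagram intact. Your related plan of using the $\ell^q\!\to\!\ell^q$ operator bounds of Lemma~\ref{rterm-estimate} for ``the rest of the diagram'' runs into the same issue: the tail estimate you have is an $\ell^1$/$\ell^\infty$-type statement after summing the endpoint, which breaks the Markovian chain, and you do not show that the superdiffusively constrained operator itself has small $\ell^q$ norm.

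The paper avoids all of this by not seeking a multiplicative bound at all. It first sums out the portion of the diagram \emph{after} the offending jump using the elementary pointwise bounds \eqref{crude_bounds} (which are uniform in the intermediate spatial point), then applies Lemma~\ref{sdif_lemma} to get an \emph{absolute} bound $e^{-\kappa R^2}$ (for $Q$) or $e^{-\kappa R}$ (for $\sfU$) on the sum over the superdiffusive jump, and finally sums out the remaining head again via \eqref{crude_bounds}. This yields $H^{\mathsf{(superdiff)}}_{r,N,R}\leq 2r\binom{h}{2}^r\frac{(\bar\beta')^{r-1}}{(1-\bar\beta')^r}\,e^{-\kappa R}$. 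Your treatment of the replica kernel via the renewal representation is essentially correct and matches the paper's Lemma~\ref{sdif_lemma}; note, though, that the exponential-moment argument there (via Markov's inequality with $\lambda=1/\sqrt n$) only produces decay $e^{-\kappa R}$, not $e^{-cR^2}$.
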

\begin{proof}
  We use the bounds established in Lemma \ref{sdif_lemma}, below, and \eqref{crude_bounds} to show \eqref{dif_trunc}.
  We can use a union bound for \eqref{H_sdiff} to obtain that
  \begin{equation} \label{sdif_1}
    \begin{split}
      & H^{\mathsf{(superdiff)}}_{r,N,R}
      =\hspace{-0.2cm}  \sum_{(I_1,...,I_r) \,\in\, \mathcal{I}^{(2)}} \,\sum_{(\vec{a},\vec{b},\vec{\bx},\vec{\by})\, \in \,  \sfC^{\mathsf{(superdiff)}}_{r,N,R}
      } \sfU^{I_1}_{b_1}(0,\by_1) \prod_{i=2}^r Q_{a_{i}-b_{i-1}}^{I_{i-1};I_i}(\by_{i-1},\bx_i) \,  \sfU^{I_i}_{b_i-a_i}(\bx_i,\by_i) \,\sigma_N(I_i) \\
      & \leq  \sum_{j=1}^r \sum_{(I_1,...,I_r) \,\in\, \mathcal{I}^{(2)}}  \sumtwo{0:=a_1\leq b_1<a_2\leq \dots < a_r\leq b_r \leq N,}{0:=\bx_1,\by_1,\dots,\bx_r,\by_r \in (\Z^2)^h}  \bigg(\ind_{\big\{\norm{\by_j-\bx_j}_{\infty}>R \sqrt{b_j-a_j}  \big\}}+ \ind_{\big\{\norm{\bx_j-\by_{j-1}}_{\infty}>R\sqrt{a_j-b_{j-1}} \big\}}\bigg) \\
      & \qquad \times \sfU^{I_1}_{b_1}(0,\by_1) \prod_{i=2}^r Q_{a_{i}-b_{i-1}}^{I_{i-1};I_i}(\by_{i-1},\bx_i) \,  \sfU^{I_i}_{b_i-a_i}(\bx_i,\by_i) \,\sigma_N(I_i) \, .
    \end{split}
  \end{equation}
  We split the sum on the last two lines of \eqref{sdif_1} according to the two indicator functions that appear therein.
  By repeated successive application of the bounds from \eqref{crude_bounds} for $j<i\leq r$ and then by using \eqref{sd2}, which reads as
  \begin{equation*}
    \sum_{\by_j \in (\Z^2)^h, \, a_j\leq b_j\leq N} \sfU^{I_j}_{b_j-a_j}(\bx_j,\by_j)\, \ind_{\big\{\norm{\by_j-\bx_j}_{\infty}>R \sqrt{b_j-a_j}  \big\}}\leq e^{-\kappa R} \, ,
  \end{equation*}
  we deduce that
  \begin{equation} \label{first_ind_1}
    \begin{split}
      & \sumtwo{0:=a_1\leq b_1<a_2\leq \dots < a_r\leq b_r \leq N,}{0:=\bx_1,\by_1,\dots,\bx_r,\by_r \in (\Z^2)^h}
      \ind_{\big\{\norm{\by_j-\bx_j}_{\infty}>R \sqrt{b_j-a_j}  \big\}}
      \sfU^{I_1}_{b_1}(0,\by_1) \prod_{i=2}^r Q_{a_{i}-b_{i-1}}^{I_{i-1};I_i}(\by_{i-1},\bx_i) \,  \sfU^{I_i}_{b_i-a_i}(\bx_i,\by_i) \,\sigma_N(I_i) \\
      & \leq e^{-\kappa R}\bigg(\frac{\bar{\beta}'}{1-\bar{\beta}'}\bigg)^{r-j}\\
      &\qquad  \times  \sumtwo{0:=a_1\leq b_1<a_2\leq \dots<b_{j-1} < a_j \leq N,}{0:=\bx_1,\by_1,\dots,\bx_j \in (\Z^2)^h} \sfU^{I_1}_{b_1}(0,\by_1) \prod_{i=2}^{j-1} Q_{a_{i}-b_{i-1}}^{I_{i-1};I_i}(\by_{i-1},\bx_i) \,  \sfU^{I_i}_{b_i-a_i}(\bx_i,\by_i) \,\sigma_N(I_i) \\
      & \qquad \qquad \times Q^{I_{j-1};I_j}_{a_j-b_{j-1}}(\by_{j-1},\bx_j)\,  \sigma_N(I_j) \, .
    \end{split}
  \end{equation}
  We then continue the summation using the bounds from \eqref{crude_bounds}, to obtain that the right-hand side of the inequality in \eqref{first_ind_1} is bounded by
  \begin{equation*}
    e^{-\kappa R} \bigg(\frac{\bar{\beta}'}{1-\bar{\beta}'}\bigg)^{r-j} \, \bigg(\frac{\bar{\beta}'}{1-\bar{\beta}'}\bigg)^{j-1} =  e^{-\kappa R} \bigg(\frac{\bar{\beta}'}{1-\bar{\beta}'}\bigg)^{r-1}\, .
  \end{equation*}
  Similarly, for the sum involving the second indicator function in \eqref{sdif_1} we obtain by using \eqref{crude_bounds} and \eqref{sd1} of Lemma \ref{sdif_lemma} that
  \begin{alignat*}{2}
     & \sumtwo{0:=a_1\leq b_1<a_2\leq \dots < a_r\leq b_r \leq N\, ,}{0:=\bx_1,\by_1,\dots,\bx_r,\by_r \in (\Z^2)^h}
     &                                                                                                               & \ind_{\big\{\norm{\bx_j-\by_{j-1}}_{\infty}>R\sqrt{a_j-b_{j-1}} \big\}}
    \sfU^{I_1}_{b_1}(0,\by_1)                                                                                                                                                                                                                                                                                                \\
     & \qquad                                                                                                        &                                                                         & \times \prod_{i=2}^r Q_{a_{i}-b_{i-1}}^{I_{i-1};I_i}(\by_{i-1},\bx_i) \,  \sfU^{I_i}_{b_i-a_i}(\bx_i,\by_i) \,\sigma_N(I_i) \\
     & \leq e^{-\kappa R^2}\,  \bigg(\frac{1}{1-\bar{\beta}'}\bigg)^{r} \, (\bar{\beta}')^{r-1} \, .
  \end{alignat*}
  Therefore, the right-hand side of the inequality in \eqref{sdif_1} is bounded by
  \begin{equation*}
    \sum_{j=1}^r \sum_{(I_1,...,I_r) \, \in \cI^{(2)}}\, e^{-\kappa R} \Bigg( \bigg(\frac{\bar{\beta}'}{1-\bar{\beta}'}\bigg)^{r-1} + \bigg(\frac{1}{1-\bar{\beta}'}\bigg)^{r} \, (\bar{\beta}')^{r-1} \Bigg)\leq e^{-\kappa R} \Bigg(2r\cdot  \frac{(\bar{\beta}')^{r-1}}{(1-\bar{\beta}')^{r}}\cdot  \binom{h}{2}^r \,\Bigg) \, ,
  \end{equation*}
  where the $\binom{h}{2}^r$ factor comes from the fact that there are at most $\binom{h}{2}^r$ choices for the sequence $(I_1,\dots,I_r) \in \cI^{(2)}$.
  Thus, recalling \eqref{sdif_1} we get that
  \begin{equation*}
    \sup_{N \in \N} H^{\mathsf{(superdiff)}}_{r,N,R} \leq e^{-\kappa R} \Bigg(2r\cdot  \frac{(\bar{\beta}')^{r-1}}{(1-\bar{\beta}')^{r}}\cdot  \binom{h}{2}^r \,\Bigg) \xrightarrow{R \to \infty} 0.
  \end{equation*}

\end{proof}

\begin{lemma} \label{sdif_lemma}
  Let $I,J \vdash \{1,...,h\}$ such that $|I|=|J|=h-1$ and $I \neq J$. For large enough $R \in (0,\infty)$ and uniformly in $\bx \in (\Z^2)_I^h$ we have that for a constant $\kappa=\kappa(h,\bar{\beta}) \in (0,\infty)$,
  \begin{equation} \label{sd1}
    \sigma_N(J) \cdot \Bigg(\sum_{1\leq n \leq N, \, \by \, \in (\Z^2)^h} Q^{I;J}_n(\bx,\by)\cdot \ind_{\big \{ \norm{\bx-\by}_{\infty}>R \sqrt{n}\big \}}\Bigg) \leq e^{-\kappa R^2}
  \end{equation}
  and
  \begin{equation} \label{sd2}
    \sum_{1 \leq n \leq N, \, \by \, \in (\Z^2)^h} \sfU^J_n(\bx,\by)\cdot \ind_{\big \{\norm{\bx-\by}_{\infty}>R \sqrt{n}\big \}}\leq e^{-\kappa R} \, .
  \end{equation}
\end{lemma}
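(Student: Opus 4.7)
My plan is to handle both bounds via a union bound over the $h$ coordinates where the deviation is large: since $\|\bx-\by\|_\infty > R\sqrt{n}$ forces $|y^{(j)}-x^{(j)}| > R\sqrt{n}$ for at least one index $j$, it suffices to estimate each single-coordinate contribution. In both bounds the index $j$ is either inside or outside the pair $\{k,\ell\}$ encoded by the partition $J$, and these two sub-cases require different estimates.

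For \eqref{sd1}, recall that $Q^{I;J}_n(\bx,\by)$ is a product of simple random walk kernels subject to $y^{(k)}=y^{(\ell)}$, and $\sigma_N(J)\leq C/\log N$. When $j\notin\{k,\ell\}$, I would sum out the free $y^{(i)}$'s to $1$, collapse the constraint $y^{(k)}=y^{(\ell)}$ into $q_{2n}(x^{(\ell)}-x^{(k)})$ via $q_n*q_n=q_{2n}$, and apply the Gaussian tail $\P(|S_n|>R\sqrt n)\leq e^{-cR^2}$ to the free coordinate $j$. The remaining factor $\sigma_N(J)\sum_{n\leq N} q_{2n}(\cdot)\leq \sigma_N(J)R_N$ is bounded uniformly in $N$ by \eqref{sigmaR}, producing $e^{-\kappa R^2}$. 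When $j\in\{k,\ell\}$, say $j=k$, the sum over $y^{(k)}=y^{(\ell)}$ becomes $\sum_{|u|>R\sqrt n} q_n(u)q_n(u-v)$ with $v=x^{(\ell)}-x^{(k)}$. By Cauchy--Schwarz together with $q_n\leq C/n$ from the local CLT, this sum is at most $(C/n)\,e^{-cR^2/2}$; summing over $n\leq N$ and multiplying by $\sigma_N(J)\leq C/\log N$ again yields the stated bound.

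For \eqref{sd2}, the same coordinate split is applied to $\sfU^J_n(\bx,\by)$, which carries a product of SRW kernels for indices $i\notin\{k,\ell\}$ and a single replica weight $U^{\beta_{k,\ell}}_N(n,y^{(k)}-x^{(k)})$. For $j\notin\{k,\ell\}$ the SRW tail contributes $e^{-cR^2}$, and the residual $\sum_n U^\beta_N(n)$ is bounded by \eqref{Uest}. For $j\in\{k,\ell\}$ I would invoke the renewal representation \eqref{ren_rep}, which gives
\begin{equation*}
\sum_{n\leq N}\sum_{|z|>R\sqrt n} U^\beta_N(n,z) \;=\; \sum_{k\geq 0}(\sigma_N R_N)^k\,\P\bigl(\tau^{(N)}_k\leq N,\ |S^{(N)}_k|>R\sqrt{\tau^{(N)}_k}\bigr).
\end{equation*}
The law of $X_1^{(N)}$ conditional on $T_1^{(N)}=n$ is $q_n^2(\cdot)/q_{2n}(0)$, which by the local CLT is centered with Gaussian decay on scale $\sqrt n$, hence is sub-Gaussian with parameter $O(\sqrt n)$. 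Consequently $S_k^{(N)}$ given $(T_i^{(N)})_{i\leq k}$ is sub-Gaussian with parameter $O(\sqrt{\tau_k^{(N)}})$, and a Hoeffding-type bound yields $\P(|S_k^{(N)}|>R\sqrt{\tau_k^{(N)}}\mid T_i^{(N)})\leq Ce^{-cR^2}$ uniformly in $k$. Summing in $k$ using $\sigma_N R_N\to\bar\beta<1$ from \eqref{sigmaR} then gives a bound of order $e^{-cR^2}$, which is even stronger than the claimed $e^{-\kappa R}$.

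The principal technical point is the sub-Gaussian concentration of $S_k^{(N)}$ in the case $j\in\{k,\ell\}$ of \eqref{sd2}: one must verify that the conditional tail of $X_1^{(N)}|T_1^{(N)}=n$ is genuinely Gaussian and uniform in $n\leq N$, which rests on the sharp local CLT form $q_n(x)\leq Ce^{-c|x|^2/n}$, and then combine the independent steps via an Azuma/Hoeffding-type argument. Everything else is bookkeeping with bounds already encoded in \eqref{crude_bounds}--\eqref{sigmaR}.
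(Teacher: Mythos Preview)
Your approach is essentially identical to the paper's: the same union bound over coordinates, the same case split according to whether the superdiffusive index lies in the colliding pair $\{k,\ell\}$ or not, and the same tools (Cauchy--Schwarz plus $q_n\le C/n$ for \eqref{sd1}, the renewal representation \eqref{ren_rep} for \eqref{sd2}). One small remark: in the $j\in\{k,\ell\}$ case of \eqref{sd2} you claim $e^{-cR^2}$, whereas the paper extracts only $e^{-cR}$ by taking the suboptimal $\lambda=1/\sqrt{n}$ in Markov's inequality applied to the MGF bound $\E[e^{\lambda|\sum_i\xi_i|}]\le 2e^{4C\lambda^2 n}$ (quoted from \cite{LZ21}); optimising $\lambda$ indeed gives your stronger Gaussian tail, but note that the pointwise bound $q_n(x)\le Ce^{-c|x|^2/n}$ you invoke is not valid for $|x|$ comparable to $n$, so the clean justification is really the cited MGF inequality rather than the local CLT alone.
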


\begin{proof}
  We start with the proof of \eqref{sd1}.
  Since $|J|=h-1$, let us assume without loss of generality that $J=\{k,\ell\} \sqcup \bigsqcup_{j \neq k,\ell}\{j\}$. In this case, $Q^{I;J}_n(\bx,\by)$ contains
   $h-2$ random walk jumps with free endpoints $y^{(j)}$, $j \neq k,\ell$, that is
  \begin{equation*}
    \prod_{j \neq k,\ell} q_n(y^{(j)}-x^{(j)}) \, .
  \end{equation*}
  Moreover, $J$ imposes the constraint that $y^{(k)}=y^{(\ell)}$, which appears in $Q^{I;J}_n(\bx,\by)$ through the product of transition kernels
  \begin{equation*}
    q_n(y^{(k)}-x^{(k)})\cdot q_n(y^{(k)}-x^{(\ell)}) \, ,
  \end{equation*}
  recall \eqref{free_evol}. The constraint $\norm{\bx-\by}_{\infty}>R \sqrt{n}$ implies that there exists $1\leq j \leq h$ such that $|x^{(j)}-y^{(j)}| > R\, \sqrt{n}$. We distinguish two cases:
  \begin{enumerate}
    \item There exists $j \neq k,\ell$ such that $|x^{(j)}-y^{(j)}| > R\, \sqrt{n}$, or

    \item $|x^{(j)}-y^{(j)}| > R\, \sqrt{n}$ for $j=k$ or $j=\ell$.
  \end{enumerate}

  In both cases, we can use $\sum_{z \in \Z^2} q_n(z)=1$ 
  to sum the kernels $q_n(y^{(i)}-x^{(i)})$ with $i \notin\{ j, k,\ell \}$ 
  to which we do not impose any super-diffusive constraints.
  By symmetry and translation invariance we can upper bound the left-hand side of \eqref{sd1} by
  \begin{equation} \label{Q_sdif_bound}
    \begin{split}
      \sigma_N(J) \cdot \Bigg((h-2) \sum_{1\leq n \leq N, \, z \in \Z^2}q_n(z) \cdot \ind_{\{|z|>R\sqrt{n}\}} &\cdot \Big\{ \sup_{u \, \in \Z^2}\sum_{ \, z \in \Z^2} q_n(z)q_n(z+u)\Big\} \\
      + 2\, \sup_{u \, \in \Z^2}\sum_{1 \leq n \leq N, \, z \in \Z^2}& q_n(z)q_n(z+u) \cdot \ind_{\{|z|>R \sqrt{n}\}}\Bigg) \, .
    \end{split}
  \end{equation}
  Looking at the first summand in \eqref{Q_sdif_bound} we have by Cauchy-Schwarz that
  \begin{equation} \label{CS}
    \begin{split}
      \sup_{u \in \Z^2} \sum_{z \in \Z^2} q_n(z)\,q_n(z+u)  \leq \Big(\sum_{z \in \Z^2} q_n^2(z) \Big )^{1/2}\cdot  \sup_{u \in \Z^2} \Big(\sum_{z \in \Z^2}q^2_n(z+u)\Big)^{1/2} \leq q_{2n}(0) \, ,
    \end{split}
  \end{equation}
  since $\sum_{z \in \Z^2}q^2_{n}(z)=q_{2n}(0)$. Let us recall the
   deviation estimate for the simple random walk, which can be found in \cite{LL10}, Proposition 2.1.2, that is
  \begin{equation} \label{moderate_devation_estimate}
    \P\Big(\max_{0\leq k \leq n}|S_k|>R \sqrt{n}\Big)\leq e^{-cR^2}\, ,
  \end{equation}
  for a constant $c\in (0,\infty)$ and all $R \in (0,\infty)$, large enough.
  By using bound \eqref{CS} and subsequently \eqref{moderate_devation_estimate} on the first summand of \eqref{Q_sdif_bound} we get that
  \begin{equation*}
    \begin{split}
      & \sum_{1\leq n \leq N, \, z \in \Z^2}q_n(z) \cdot \ind_{\{|z|>R\sqrt{n}\}} \cdot \Big\{ \sup_{u \, \in \Z^2}\sum_{ \, z \in \Z^2} q_n(z)q_n(z+u)\Big\}  \\
      \leq  &\sum_{1\leq n \leq N,\, z \in \Z^2} q_{2n}(0)\cdot q_n(z) \ind_{\{|z|> R \sqrt{n}\}}\\
      \leq &\, e^{-c R^2} R_N \, .
    \end{split}
  \end{equation*}
  We recall from \eqref{R_N} and \eqref{R_N_asymp} that $R_N=\sum_{n=1}^N q_{2n}(0) \stackrel{N \to \infty}{\approx} \frac{\log N}{\pi}$, therefore,
  \begin{equation} \label{Q_sdif_bound_1}
    \begin{split}
      & \sigma_N(J) \cdot \Bigg((h-2) \sum_{1\leq n \leq N, \, z \in \Z^2}q_n(z) \cdot \ind_{\{|z|>R\sqrt{n}\}}  \cdot \Big\{ \sup_{u \, \in \Z^2}\sum_{ \, z \in \Z^2} q_n(z)q_n(z+u)\Big\} \Bigg) \\
      \leq &\,(h-2) \,\sigma_N(J) \,R_N\, e^{-cR^2}\\
      \leq &\, (h-2)\, \bar{\beta}' \, \,e^{-cR^2} \, ,
    \end{split}
  \end{equation}
  for some $\bar\beta' \in (\bar\beta,1)$.
  The second summand in the parenthesis in \eqref{Q_sdif_bound} can be bounded via Cauchy-Schwarz by
  \begin{equation} \label{Q_sdif_bound_2}
    \Bigg( \sum_{1 \leq n \leq N, \, z \in \Z^2 } q^2_n(z) \cdot \ind_{\{|z|>R\sqrt{n}\}}\Bigg)^{\frac{1}{2}} \cdot \Bigg( \sum_{1 \leq n \leq N, \, z \in \Z^2 } q^2_n(z+u) \Bigg)^{\frac{1}{2}} \, .
  \end{equation}
  For the first term in \eqref{Q_sdif_bound_2}, using that $\sup_{z \in \Z^2 }q_n(z)\leq \frac{C}{n}$ we get
  \begin{equation} \label{Q_sdif_bound_3}
    \begin{split}
      \sum_{1\leq n \leq N, \, z \in \Z^2} q^2_n(z) \cdot \ind_{\{|z|>R \sqrt{n}\}}\leq  \, C \sum_{1\leq n \leq N, \, z \in \Z^2} \frac{q_n(z)}{n}\cdot \ind_{\{|z|>R \sqrt{n}\}}
      \leq  \, C \, e^{-c R^2} \, \log N
    \end{split}
  \end{equation}
  For the second term in \eqref{Q_sdif_bound}, we have that for all $u \in \Z^2$
  \begin{equation*}
    \sum_{1\leq n \leq N,\,  z \in \Z^2 } q^2_n(z+u)= \sum^N_{n=1} q_{2n}(0)\stackrel{N \to \infty}{\approx} \frac{\log N}{\pi} \, .
  \end{equation*}
  Thus, by \eqref{Q_sdif_bound_2} together with \eqref{Q_sdif_bound_3} we conclude that
  for the second summand in \eqref{Q_sdif_bound} we have
  \begin{equation*}
    \sigma_N(J) \cdot \Bigg( 2\, \sup_{u \, \in \Z^2}\sum_{1 \leq n \leq N, \, z \in \Z^2} q_n(z)q_n(z+u) \cdot \ind_{\{|z|>R \sqrt{n}\}}\Bigg) \leq C \, e^{-\frac{cR^2}{2}} \, .
  \end{equation*}
  Therefore, recalling \eqref{Q_sdif_bound_1} we deduce that there exists a constant $\kappa(h,\bar{\beta}) \in (0,\infty)$ such that
  \begin{equation*}
    \sigma_N(J) \cdot \Bigg(\sum_{1\leq n \leq N, \, \by \, \in (\Z^2)^h} Q^{I;J}_n(\bx,\by)\cdot \ind_{\big \{ \norm{\bx-\by}_{\infty}>R \sqrt{n}\big \}}\Bigg) \leq e^{-\kappa R^2} \, .
  \end{equation*}
  We move to the proof of \eqref{sd2}. Similar to the proof of \eqref{sd1}, we can bound the left-hand side of \eqref{sd2} by
  \begin{equation} \label{U_sdif_bounds}
    (h-1) \sum_{1 \leq n \leq N, \,w, \, z \in \Z^2}U_N^{\bar{\beta}}(n,w)\cdot q_n(z)  \ind_{\{|z|>R \sqrt{n}\}} + \sum_{1\leq n \leq N, z \in \Z^2} U^{\bar{\beta}}_N(n,z) \cdot \ind_{\{ |z|>R \sqrt{n}\}} \, .
  \end{equation}
  For the first summand in \eqref{U_sdif_bounds}, by \eqref{moderate_devation_estimate} we have that
  \begin{equation*}
    \sum_{z \in \Z^2} q_n(z)\ind_{\{|z|>R \sqrt{n}\}}\leq e^{-cR^2} \, ,
  \end{equation*}
  and $\sum_{1\leq n \leq N,\, w \in \Z^2} U_N^{\bar{\beta}}(n,w)\leq \frac{1}{1-\bar{\beta}'}$, therefore
  \begin{equation} \label{U_sdif_1}
    (h-1) \sum_{1 \leq n \leq N, \,w, \, z \in \Z^2}U_N^{\bar{\beta}}(n,w)\cdot q_n(z)  \ind_{\{|z|>R \sqrt{n}\}}\leq \frac{h-1}{1-\bar{\beta}'} \, e^{-c R^2} \, .
  \end{equation}
  For the second summand, we use the renewal representation of $U^{\bar{\beta}}_N(\cdot,\cdot)$ introduced in \eqref{ren_rep}.
  In particular, we have that
  \begin{equation} \label{sdif_renewal}
    \sum_{1\leq n \leq N, z \in \Z^2} U^{\bar{\beta}}_N(n,z) \cdot \ind_{\{ |z|>R \sqrt{n}\}}=\sum_{k\geq 0} 
    (\sigma_N(\bar\beta)R_N)^k \sum_{n=0}^N \P\Big(\big|S^{\ms (N)}_k \big|>R\sqrt{n},\tau^{\ms (N)}_k=n\Big) \, .
  \end{equation}
  Then, by conditioning on the times $(T^{\ms (N)}_i)_{1 \leq i \leq k}$ for which $\tau_k^{\ms(N)}=T^{\ms(N)}_1+\dots+T^{\ms (N)}_k$ we have that
  \begin{equation} \label{sdif_cond}
    \begin{split}
      \P\Big(\big|S^{\ms (N)}_k \big|>& R\sqrt{n},\, \tau^{\ms (N)}_k=n\Big)\\
      &=\sum_{n_1+\dots+n_k=n} \P\Big(\big|S^{\ms (N)}_k \big|>R\sqrt{n}\, \Big | \cap_{i=1}^k\big \{T^{\ms(N)}_i=n_i \big\}\Big)\, \prod_{i=1}^k \P\big(T^{\ms (N)}_i=n_i\big) \, .
    \end{split}
  \end{equation}
  Note that when we condition on $\cap_{i=1}^k\big \{T^{(\ms N)}_i=n_i\big \}$, $S_k^{\ms (N)}$ is a sum of $k$ independent random variables
   $(\xi_i)_{1 \leq i \leq k}$ taking values in $\Z^2$, with law 
  \begin{equation*}
    \P(\xi_i=x)=\frac{q^2_{n_i}(x)}{q_{2n_i}(0)} \, .
  \end{equation*}
  The proof of Proposition 3.5 in \cite{LZ21} showed that there exists a constant $C \in (0,\infty)$ such that for all $\lambda \geq 0$
  \begin{equation} \label{mom_ineq}
    \E\Big[e^{\lambda |\sum_{i=1}^k \xi_i|}\Big] \leq 2e^{4C\lambda^2 n} \, ,
  \end{equation}
  uniformly over the values $n_1,...,n_k$.
  Therefore, by \eqref{mom_ineq} with $\lambda=\frac{1}{\sqrt{n}}$  and Markov's inequality we obtain that
  \begin{equation*}
    \P\Big(\big|S^{\ms (N)}_k \big|>R\sqrt{n}\, \Big | \cap_{i=1}^k\{T^{\ms(N)}_i=n_i\}\Big) \leq 2e^{4C-R} \, .
  \end{equation*}
  Thus, looking back at \eqref{sdif_cond} we have that for all $k \geq 0$,
  \begin{equation*}
    \P\Big(\big|S^{\ms (N)}_k \big|> R\sqrt{n},\, \tau^{\ms (N)}_k=n\Big) \leq 2e^{4C-R}\,\, \P(\tau^{\ms (N)}_k=n) \, ,
  \end{equation*}
  therefore, plugging the last inequality into \eqref{sdif_renewal}, we get that
  \begin{equation} \label{U_sdif_2}
    \sum_{1\leq n \leq N, z \in \Z^2} U^{\bar{\beta}}_N(n,z) \cdot \ind_{\{ |z|>R \sqrt{n}\}} \leq 2e^{4C-R} \sum_{n=1}^N U^{\bar{\beta}}_N(n) \leq \frac{2e^{4C-R}}{1-\bar{\beta}'} \, ,
  \end{equation}
  therefore by \eqref{U_sdif_1} and \eqref{U_sdif_2} we have that there exists a constant $\kappa(h,\bar \beta)\in (0,\infty)$ such that
  \begin{equation*}
    \sum_{1 \leq n \leq N, \, \by \, \in (\Z^2)^h} \sfU^J_n(\bx,\by)\cdot \ind_{\big \{\norm{\bx-\by}_{\infty}>R \sqrt{n}\big \}}\leq e^{-\kappa R} \, ,
  \end{equation*}
  for large enough $R \in (0,\infty)$, thus concluding the proof of \eqref{sd2}.
\end{proof}

\subsection{Scale separation.}\label{step_3}

In this step we show that given $r \in \N $, $r \geq 2$, the main contribution to $H^{\mathsf{(diff)}}_{r,N}$ comes from
configurations where $a_{i+1}-b_i>M(b_i-b_{i-1})$ for all $1\leq i \leq r$ and large $M$, as $N\to \infty$.
Recall from \eqref{C_dif_set} that
\begin{equation*}
  H^{\mathsf{(diff)}}_{r,N,R}=\sum_{(I_1,...,I_r) \,\in\, \mathcal{I}^{(2)}} \,\sum_{(\vec{a},\vec{b},\vec{\bx},\vec{\by})\, \in \,\sfC^{\mathsf{(diff)}}_{r,N,R}
  } \sfU^{I_1}_{b_1}(0,\by_1) \prod_{i=2}^r Q_{a_i-b_{i-1}}^{I_{i-1};I_i}(\by_{i-1},\bx_i) \,  \sfU^{I_i}_{b_i-a_i}(\bx_i,\by_i) \cdot\sigma_N(I_i) \, .
\end{equation*}
Define the set
\begin{equation} \label{C_main_set}
  \sfC^{\mathsf{(main)}}_{r,N,R,M}:=\sfC^{\mathsf{(diff)}}_{r,N,R}
  \cap \Big\{ (\vec{a},\vec{b},\vec{\bx},\vec{\by}):\, a_{i+1}-b_i>M(b_i-b_{i-1})\text{ for all } 1\leq i\leq r-1\,  \Big\} \, ,
\end{equation}
with the convention $b_0:=0$ and
accordingly define
\begin{equation} \label{H_main}
  H^{\mathsf{(main)}}_{r,N,R,M}:=\hspace{-0.2cm}\sum_{(I_1,...,I_r) \,\in\, \mathcal{I}^{(2)}} \sum_{(\vec{a},\vec{b},\vec{\bx},\vec{\by})\, \in \,\sfC^{\mathsf{(main)}}_{r,N,R,M}} \sfU^{I_1}_{b_1}(0,\by_1) \prod_{i=2}^r Q_{a_i-b_{i-1}}^{I_{i-1};I_i}(\by_{i-1},\bx_i)  \sfU^{I_i}_{b_i-a_i}(\bx_i,\by_i) \,\sigma_N(I_i) \, .
\end{equation}
We then have the following approximation proposition:
\begin{proposition} \label{M_proposition}
  For all fixed $r\in \N$, $r \geq 2$ and $M \in (0,\infty)$,
  \begin{equation} \label{main_approx}
    \lim_{N \to \infty}  \sup_{R \, \in (0,\infty)}\Big|H^{\mathsf{(diff)}}_{r,N,R}-H^{\mathsf{(main)}}_{r,N,R,M}\Big|=0\, .
  \end{equation}
\end{proposition}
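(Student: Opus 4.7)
My plan is to decompose the difference via a union bound over the step where the main-event constraint fails, and then show that each piece vanishes at rate $O(\log M/\log N)$. Writing $V_i := b_i - b_{i-1}$ and $g_{i+1} := a_{i+1} - b_i$, I bound
\[
H^{\mathsf{(diff)}}_{r,N,R} - H^{\mathsf{(main)}}_{r,N,R,M} \;\leq\; \sum_{i=1}^{r-1} \tilde H^{(i)}_{r,N,R,M},
\]
where $\tilde H^{(i)}$ collects configurations in $\sfC^{\mathsf{(diff)}}_{r,N,R}$ that violate the step-$i$ constraint $g_{i+1} > M V_i$. Since $r$ is fixed, it suffices to show each $\tilde H^{(i)}\to 0$ as $N\to\infty$, uniformly in $R$.

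\textbf{Main estimate.} Fix $i$. First, apply the crude $\ell^1$ bounds of \eqref{crude_bounds} to each $\sfU^{I_j}_{t_j}$ with $j\neq i$ and each $\sigma_N(I_j)Q^{I_{j-1};I_j}_{g_j}$ with $j\neq i+1$, absorbing the resulting constant into $C(r,\bar\beta)$. For the constrained step, the Cauchy--Schwarz computation used in the proof of \eqref{crude_bounds} gives
\[
\sum_{\bx_{i+1}\sim I_{i+1}} Q^{I_i;I_{i+1}}_{g_{i+1}}(\by_i,\bx_{i+1}) \;=\; q_{2g_{i+1}}\bigl(y_i^{(k_{i+1})}-y_i^{(\ell_{i+1})}\bigr),
\]
where $\{k_{i+1},\ell_{i+1}\}$ is the colliding pair of $I_{i+1}$. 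When neither of these indices was involved in any prior colliding pair, the marginal density of $(y_i^{(k_{i+1})},y_i^{(\ell_{i+1})})$ produced by summing the earlier factors is $q_{b_i}(y_i^{(k_{i+1})})q_{b_i}(y_i^{(\ell_{i+1})})$, and Chapman--Kolmogorov gives $\sum_{u,v} q_{b_i}(u)q_{b_i}(v)q_{2g_{i+1}}(u-v)=q_{2(b_i+g_{i+1})}(0)=q_{2 a_{i+1}}(0)$. In the general case the same type of bound $C\cdot q_{2 a_{i+1}}(0)$ is obtained after propagating the collision densities $U^\beta_N(\cdot,\cdot)$ along the renewal chain (see the last paragraph).

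\textbf{Scale separation via the $R_N$ asymptotic.} Summing over $g_{i+1}\in[1,MV_i]$ and invoking \eqref{R_N_asymp},
\[
\sigma_N(I_{i+1})\!\!\sum_{g_{i+1}=1}^{MV_i}\!\!q_{2(b_i+g_{i+1})}(0) \;=\; \sigma_N(I_{i+1})\bigl(R_{b_i+MV_i}-R_{b_i}\bigr) \;=\; \frac{\bar\beta}{\log N}\,\log\!\Bigl(1+\frac{MV_i}{b_i}\Bigr) + o\!\Bigl(\frac{1}{\log N}\Bigr).
\]
The crucial observation is that $V_i\leq b_i$ (since $b_{i-1}\geq 0$), so $\log(1+MV_i/b_i)\leq \log(1+M)$ is bounded uniformly in $N$. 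Combined with the constant produced by the crude bounds, this yields $\tilde H^{(i)}_{r,N,R,M} = O(\log(1+M)/\log N)\to 0$ as $N\to\infty$ for every fixed $M$. Since the bounds use only $\ell^1$ and Chapman--Kolmogorov estimates that do not involve $R$, the convergence is uniform in $R$.

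\textbf{Main obstacle.} The subtle part is the general case in the second paragraph, where $(k_{i+1},\ell_{i+1})$ shares an index with (or coincides with) some prior colliding pair $I_j$, $j\leq i$. The joint density of the two coordinates at time $b_i$ then involves convolutions with the renewal density $U^\beta_N(t_j,\cdot)$ from the representation \eqref{ren_rep}, rather than plain random-walk kernels. The idea is to commute the convolutions using $(q_a*q_b)(z)=q_{a+b}(z)$ along the renewal chain, and to control non-zero displacements via the local CLT bound $q_n(y)\leq C q_n(0)$; together with the crude $\ell^1$ bound $\sum_n U^\beta_N(n)\leq(1-\bar\beta')^{-1}$, this recovers an upper bound of the same form $C'\cdot q_{2 a_{i+1}}(0)$, so that the scale-separation step of the third paragraph goes through without modification.
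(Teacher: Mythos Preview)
Your overall architecture --- union bound over the index $i$ where the scale constraint $a_{i+1}-b_i>M(b_i-b_{i-1})$ fails, then crude $\ell^1$ bounds \eqref{crude_bounds} for the unconstrained steps --- matches the paper's. The divergence is in how the violating step is treated.

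The paper does \emph{not} compute a joint density of $(y_i^{(k_{i+1})},y_i^{(\ell_{i+1})})$. It uses the structural fact that, since $I_i\neq I_{i+1}$, at least one of $k_{i+1},\ell_{i+1}$ is not in $I_i$ and has therefore evolved freely since at least time $b_{i-1}$. After Chapman--Kolmogorov on that walk, every term contains a product $q_{a_{i+1}-b_i}(x_{i+1}-y_i)\cdot q_{a_{i+1}-b_{i-1}}(x_{i+1}-y_{i-1})$, and Cauchy--Schwarz over $(a_{i+1},x_{i+1})$ yields a bound $C\sqrt{\log(1+M)/\log N}$ that is \emph{uniform} in $\by_i,\by_{i-1}$ and in $t=b_i-b_{i-1}$. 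This uniformity is what then permits \eqref{crude_bounds} to be applied to steps $1,\dots,i$.

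Your route has a gap. First, a bookkeeping issue: the bounds \eqref{crude_bounds} sum over \emph{outputs} for fixed inputs and can only be peeled off from the right, so your instruction to apply them to $\sfU^{I_j}$ and $\sigma_N Q^{I_{j-1};I_j}$ for all $j<i$ \emph{before} treating the constrained step is not implementable as written. More importantly, your ``simple case'' (neither index of $I_{i+1}$ in any prior $I_j$) is essentially a toy case --- for $h=3$ it never occurs once $i\geq 1$ --- so the argument hinges entirely on the ``general case'', which you only sketch. That sketch does not go through as stated: the renewal kernels $U^\beta_N(\cdot,\cdot)$ do not satisfy a Chapman--Kolmogorov identity with $q_n$, and the blanket local-CLT bound $q_{2g_{i+1}}(u-v)\leq C/g_{i+1}$ alone yields $\sigma_N\sum_{g\leq MV_i}g^{-1}=O(1)$, not $o(1)$. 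What would actually rescue your approach is precisely the observation the paper isolates: one of the two coordinates, say $y_i^{(k_{i+1})}$, is linked to $y_{i-1}^{(k_{i+1})}$ by a \emph{free} kernel $q_{b_i-b_{i-1}}$ (not by $U^\beta_N$); convolving that kernel with $q_{2g_{i+1}}$ produces a kernel of length $\geq (b_i-b_{i-1})+2g_{i+1}$, whose local-CLT bound then sums over $g_{i+1}\leq M(b_i-b_{i-1})$ to $O(\sigma_N\log(1+M))$. Once this is made explicit you are reproducing the paper's mechanism by a slightly different (and in fact sharper) route, but the Cauchy--Schwarz version has the practical advantage of delivering the needed uniformity in one line.
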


\begin{proof}
  Fix $M>0$. Let us begin by showing \eqref{main_approx} for the simplest case which is $r=2$. We have
  \begin{equation*}
    \begin{split}
      H^{\mathsf{(diff)}}_{2,N,R}&- H^{\mathsf{(main)}}_{2,N,R,M}\\
      \leq &\sum_{(I_1,I_2) \in \,\mathcal{I}^{(2)}}\,\, \sumtwo{0 \leq b_1 <a_2\leq N, \,a_2-b_1\leq M b_1,}{\by_1,\bx_2,\by_2 \in (\Z^2)^h} \sfU^{I_1}_{b_1}(0,\by_1)
      Q_{a_2-b_{1}}^{I_{1};I_2}(\by_{1},\bx_2)  \,\sigma_N(I_2)  \sfU^{I_2}_{b_2-a_2}(\bx_2,\by_2).
    \end{split}
  \end{equation*}
  We can bound $\sigma_N(I_2)$ by
  $\frac{\pi\bar\beta\,'}{\log N}$, for some $\bar\beta\,'\in (\bar\beta,1)$
  and use \eqref{crude_bounds} to bound the last replica, i.e. the sum over $(b_2,\by_2)$, thus getting
  \begin{equation} \label{reduction_1}
    H^{\mathsf{(diff)}}_{2,N,R}- H^{\mathsf{(main)}}_{2,N,R,M}
    \leq
    \frac{\pi\bar\beta\,'(1-\bar\beta\,')^{-1}}{\log N}  \sum_{(I_1,I_2) \in\, \mathcal{I}^{(2)}} \, 
    \sumtwo{0 \leq b_1 <a_2\leq N, \,a_2-b_1\leq M b_1 \, ,}{\by_1,\bx_2\in (\Z^2)^h}
    \sfU^{I_1}_{b_1}(0,\by_1) Q_{a_2-b_{1}}^{I_1;I_2}(\by_{1},\bx_2)  \, .
  \end{equation}
  Notice that at this stage we can sum out the spatial endpoints of the free kernels in \eqref{reduction_1} and bound the coupling strength $\beta_{k,\ell}$ of any replica $\sfU^{I_1}_{b_1}(0,\by_1)$ with $I_1=\{k,\ell\}\sqcup \bigsqcup_{j\neq k,\ell}\{j\}$ by $\bar\beta$ to obtain
  \begin{align} \label{2_main}
    H^{\mathsf{(diff)}}_{2,N,R} & - H^{\mathsf{(main)}}_{2,N,R,M} \notag                                                                             \\
                                & \leq \frac{\pi\bar\beta\,'(1-\bar\beta\,')^{-1}}{\log N} \sum_{(I_1,I_2) \in \, \mathcal{I}^{(2)}} \,
    \sumtwo{0 \leq b_1 <a_2\leq N, \,a_2-b_1\leq M b_1 \, ,}{y_1,x_2 \in \Z^2} U^{\bar{\beta}}_N(b_1,y_1) q_{a_2-b_1}(x_2-y_1)\, q_{a_2}(x_2) \notag \\
                                & \leq \frac{\pi\bar\beta\,'(1-\bar\beta\,')^{-1}}{\log N} \binom{h}{2}^2
    \sumtwo{0 \leq b_1 <a_2\leq N, \,a_2-b_1\leq M b_1 \, ,}{y_1,x_2 \in \Z^2} U^{\bar{\beta}}_N(b_1,y_1) q_{a_2-b_1}(x_2-y_1)\, q_{a_2}(x_2) \, .
  \end{align}
  For the last inequality we also have used that the number of possible partitions $(I_1,I_2)\in \cI^{(2)}$ is bounded by $\binom{h}{2}^2$. For every fixed value of $b_1$ in \eqref{2_main}, we use Cauchy-Schwarz for the sum over $(a_2,x_2) \in (0,(1+M)b_1]\times \Z^2$ in \eqref{2_main} to obtain that
  \begin{equation} \label{ss_CS}
    \begin{split}
      \sum_{b_1<a_2\leq (1+M) b_1, \, x_2 \in \Z^2}& q_{a_2-b_1}(x_2-y_1)\,  q_{a_2}(x_2)\\
      \leq &\Bigg(\sum_{0<a_2\leq (1+M) b_1,\, x_2 \in \Z^2} q^2_{a_2-b_1}(x_2-y_1) \Bigg)^{\frac{1}{2}} \Bigg( \sum_{b_1<a_2\leq (1+M) b_1,\,  x_2 \in \Z^2} q^2_{a_2}(x_2)\Bigg)^{\frac{1}{2}} \\
      = &\Bigg(\sum_{0<a_2\leq (1+M) b_1} q_{2(a_2-b_1)}(0) \Bigg)^{\frac{1}{2}} \Bigg( \sum_{b_1<a_2\leq (1+M) b_1} q_{2a_2}(0)\Bigg)^{\frac{1}{2}} \, .
    \end{split}
  \end{equation}
  We can bound the leftmost parenthesis in the last line of \eqref{ss_CS} by 
  $R^{1/2}_N=\Big(\sum_{n=1}^N q_{2n}(0)\Big)^{1/2}=O(\sqrt{\log N})$. For the other term we have
  \begin{equation} \label{ss_logM}
    \sum_{b_1<a_2\leq(1+M)b_1} q_{2a_2}(0) \leq c\, \sum_{b_1<a_2\leq(1+M)b_1} \tfrac{1}{a_2}\leq c \log(1+M) \, .
  \end{equation}
  Therefore, using \eqref{ss_CS} and \eqref{ss_logM} along with $\sum_{0\leq b_1\leq N,\,y_1 \in \Z^2} U^{\bar\beta}_N(b_1,y_1) \leq (1-\bar\beta\,')^{-1}$ in \eqref{2_main} we obtain that
  \begin{equation*} 
    H^{\mathsf{(diff)}}_{2,N,R}- H^{\mathsf{(main)}}_{2,N,R,M} \leq C \pi\bar\beta\,'(1-\bar\beta\,')^{-2}  \sqrt{\frac{\log(1+M)}{\log N}} \xrightarrow{N \to \infty} 0 \, .
  \end{equation*}

  Let us show how this argument can be extended to work for general $r \in \N$. The key observation is that for every fresh collision between two random walks, that is $I_{i+1}=\{k,\ell\} \sqcup \bigsqcup_{j \neq k,\ell} \{j\} $, happening at time $0<a_{i+1}\leq N$, we have $I_i \neq I_{i+1}$, therefore one of the two colliding walks with labels $k,\ell$ has to have travelled freely, for time at least $a_{i+1}-b_{i-1}$ from its previous collision. More precisely, every term in the expansion of $ H^{\mathsf{(diff)}}_{r,N,R} - H^{\mathsf{(main)}}_{r,N,R,M}$ contains for every $1\leq i\leq r-1$ a product of the form
  \begin{equation*}
    q_{a_{i+1}-b_i}(x_{i+1}-y_i)\cdot  q_{a_{i+1}-b_{i-1}}(x_{i+1}-y_{i-1})\, ,  \end{equation*}
  see Figure \ref{fig:scales}.
  Recall from \eqref{C_set} and \eqref{H_main} that we have the expansion
  \begin{equation} \label{dif_minus_main}
    \begin{split}
      &H^{\mathsf{(diff)}}_{r,N,R} - H^{\mathsf{(main)}}_{r,N,R,M}\\
      &=\sum_{(I_1,...,I_r) \,\in\, \mathcal{I}^{(2)}} \,\sum_{(\vec{a},\vec{b},\vec{\bx},\vec{\by})\, \in \,\sfC^{\mathsf{(diff)}}_{r,N,R}\smallsetminus \sfC^{\mathsf{(main)}}_{r,N,R,M}} \sfU^{I_1}_{b_1}(0,\by_1) \prod_{i=2}^r Q_{a_i-b_{i-1}}^{I_{i-1};I_i}(\by_{i-1},\bx_i) \,  \sfU^{I_i}_{b_i-a_i}(\bx_i,\by_i) \,\sigma_N(I_i) \, ,
    \end{split}
  \end{equation}
  where by definition \eqref{C_main_set} we have that
  \begin{equation} \label{Cdiff_minus}
    \sfC^{\mathsf{(diff)}}_{r,N,R}\smallsetminus \sfC^{\mathsf{(main)}}_{r,N,R,M}=  \sfC^{\mathsf{(diff)}}_{r,N,R}\,
    \cap \, \bigcup_{i=1}^{r-1}\Big\{ (\vec{a},\vec{b},\vec{\bx},\vec{\by}):\, a_{i+1}-b_i\leq M(b_i-b_{i-1})\,  \Big\} \, .
  \end{equation}
 We will start the summation of \eqref{dif_minus_main} from the end until we find the index $1\leq i \leq r-1$ for which the sum over $a_{i+1}$ is restricted to $\big(b_i,b_i+M(b_i-b_{i-1})\big ]$, in agreement with \eqref{Cdiff_minus}, using \eqref{crude_bounds} to bound the contribution of the sums over $b_{j},a_{j+1}$ and the corresponding spatial points for $i<j\leq r-1$.
  Next, notice that we can bound the contribution of the sum over $a_{i+1} \in \big(b_i,b_i+M(b_i-b_{i-1})\big ]$ and $x_{i+1} \in \Z^2$,
  using a change of variables, by a factor of
  \begin{equation*}
    \frac{C}{\log N}\Bigg(\sup_{1\leq t\leq N,\, u \in \Z^2} \sum_{1\leq n \leq Mt ,\,  z \in \Z^2} q_{n}(z)q_{n+t}(z+u)\Bigg)\leq C \sqrt{\frac{\log (1+M)}{\log N}}\, ,
  \end{equation*}
  using Cauchy-Schwarz as in \eqref{ss_CS} and \eqref{ss_logM}. The remaining sums over $b_j,a_{j-1}$, $1\leq j\leq i$ can be bounded again via \eqref{crude_bounds}.
  Therefore, taking into account that by \eqref{Cdiff_minus} there are $r-1$ choices for the index $i$ such that the sum over $a_{i+1}$ is restricted to $\big(b_i,b_i+M(b_i-b_{i-1})\big ]$, we can give an upper bound to $H^{\mathsf{(diff)}}_{r,N,R}- H^{\mathsf{(main)}}_{r,N,R,M}$ as follows:
  \begin{equation*}
    H^{\mathsf{(diff)}}_{r,N,R}- H^{\mathsf{(main)}}_{r,N,R,M}\leq C (r-1) \, \binom{h}{2}^r \,
    \Big(\frac{\bar{\beta}\,'}{1-\bar{\beta}\,'}\Big)^r  \sqrt{\frac{\log(1+M)}{\log N}} \xrightarrow{N \to \infty} 0 \, ,
  \end{equation*}
  where we also used that the number of distinct sequences $(I_1,\dots,I_r) \in \cI^{(2)}$ is bounded by $\binom{h}{2}^r$.
\end{proof}

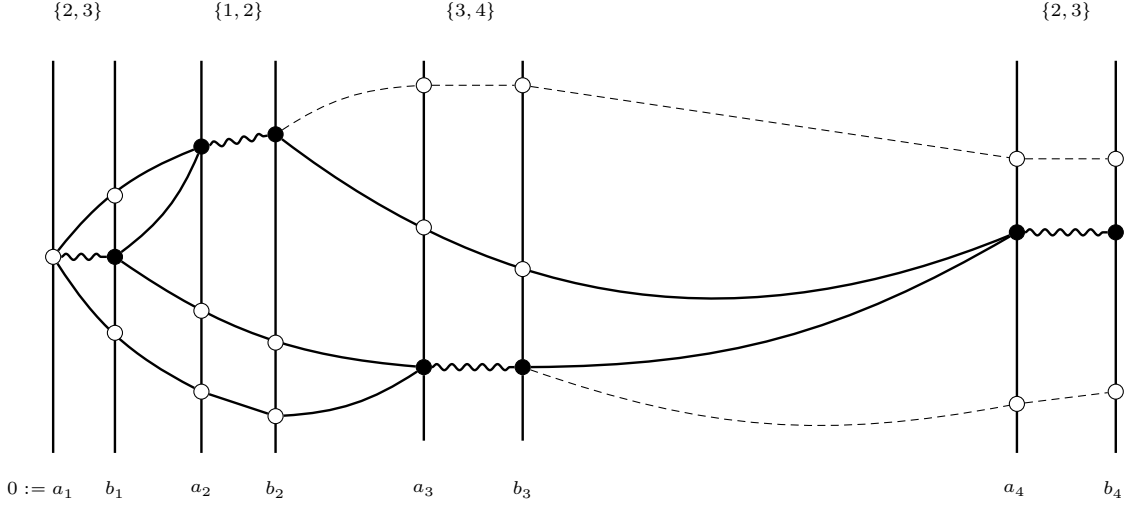
\begin{figure}
  \tikzstyle{filled_vertex}=[fill={black}, draw={black}, shape=circle,inner sep=2]
  \tikzstyle{new style 0}=[fill=white, draw={black}, shape=circle, inner sep=2pt]
  \tikzstyle{straight_line}=[-, fill=none, draw={black},line width=0.9pt]
  \tikzstyle{wiggly_line}=[-, fill=none, draw={black}, decorate=true, decoration={snake,amplitude=.4mm,segment length=2mm},line width=0.9pt]
  \tikzstyle{densely_dashed}= [dash pattern=on 3pt off 2pt]

  \tikzstyle{none}=[inner sep=0pt]
  \pgfdeclarelayer{edgelayer}
  \pgfdeclarelayer{nodelayer}
  \pgfsetlayers{edgelayer,nodelayer,main}
  \usetikzlibrary{decorations.pathmorphing}

  { \tiny

    \begin{tikzpicture}[scale=0.65]
      \begin{pgfonlayer}{nodelayer}
        \node [style=none] (1) at (0, 4) {};
        \node [style=none] (2) at (0, -4) {};
        \node [style=new style 0] (4) at (0, 0) {};
        \node [style={filled_vertex}] (5) at (19.5, 0.5) {};
        \node [style=none] (6) at (1.25, 4) {};
        \node [style=none] (7) at (1.25, -4) {};
        \node [style={filled_vertex}] (8) at (1.25, 0) {};
        \node [style={filled_vertex}] (9) at (3, 2.25) {};
        \node [style={filled_vertex}] (11) at (4.5, 2.5) {};
        \node [style=none] (14) at (3, 4) {};
        \node [style=none] (15) at (3, -4) {};
        \node [style=none] (16) at (4.5, 4) {};
        \node [style=none] (17) at (4.5, -4) {};
        \node [style={filled_vertex}] (18) at (21.5, 0.5) {};
        \node [style=none] (19) at (19.5, 4) {};
        \node [style=none] (20) at (21.5, 4) {};
        \node [style=none] (21) at (19.5, -4) {};
        \node [style=none] (22) at (21.5, -4) {};
        \node [style=new style 0] (27) at (3, -2.75) {};
        \node [style=new style 0] (28) at (4.5, -3.25) {};
        \node [style=none] (30) at (-0.25, -4.75) {$0:=a_1$};
        \node [style=none] (31) at (1.25, -4.75) {$b_1$};
        \node [style=none] (32) at (3, -4.75) {$a_2$};
        \node [style=none] (33) at (4.5, -4.75) {$b_2$};
        \node [style=none] (34) at (7.5, -4.75) {$a_3$};
        \node [style=none] (35) at (9.5, -4.75) {$b_3$};
        \node [style=new style 0] (36) at (1.25, 1.25) {};
        \node [style=new style 0] (37) at (1.25, -1.55) {};
        \node [style=new style 0] (54) at (19.5, -3) {};
        \node [style=none] (59) at (0.5, 5) {$\{2,3\}$};
        \node [style=none] (60) at (3.75, 5) {$\{1,2\}$};
        \node [style=none] (61) at (20.5, 5) {$\{2,3\}$};
        \node [style={filled_vertex}] (62) at (7.5, -2.25) {};
        \node [style=new style 0] (63) at (3, -1.10) {};
        \node [style=new style 0] (64) at (4.5, -1.75) {};
        \node [style={filled_vertex}] (65) at (9.5, -2.25) {};
        \node [style=none] (68) at (7.5, 4) {};
        \node [style=none] (69) at (9.5, 4) {};
        \node [style=none] (70) at (7.5, -3.75) {};
        \node [style=none] (71) at (9.5, -3.75) {};
        \node [style=none] (72) at (19.45, -4.75) {$a_4$};
        \node [style=none] (73) at (21.45, -4.75) {$b_4$};
        \node [style=none] (74) at (8.45, 5) {$\{3,4\}$};
        \node [style=new style 0] (75) at (7.5, 0.6) {};
        \node [style=new style 0] (76) at (9.5, -0.25) {};
        \node [style=new style 0] (77) at (7.5, 3.5) {};
        \node [style=new style 0] (78) at (9.5, 3.5) {};
        \node [style=new style 0] (79) at (19.5, 2) {};
        \node [style=new style 0] (80) at (21.5, 2) {};
        \node [style=new style 0] (81) at (21.5, -2.75) {};
      \end{pgfonlayer}
      \begin{pgfonlayer}{edgelayer}
        \draw [style={straight_line}] (1.center) to (2.center);
        \draw [style={straight_line}] (6.center) to (7.center);
        \draw [style={wiggly_line}] (4) to (8);
        \draw [style={straight_line}, bend left=15] (4) to (9);
        \draw [style={wiggly_line}] (9) to (11);
        \draw [style={straight_line}] (14.center) to (15.center);
        \draw [style={straight_line}] (16.center) to (17.center);
        \draw [style={wiggly_line}] (5) to (18);
        \draw [style={straight_line}] (19.center) to (21.center);
        \draw [style={straight_line}] (20.center) to (22.center);
        \draw [style={straight_line}, bend right=15] (4) to (27);
        \draw [style={straight_line}] (27) to (28);
        \draw [style={straight_line}, bend right=15] (8) to (9);
        \draw [style={straight_line}, bend right=15] (28) to (62);
        \draw [style={straight_line}, bend right=15] (8) to (62);
        \draw [style={wiggly_line}] (62) to (65);
        \draw [style={straight_line}, bend right] (11) to (5);
        \draw [style={straight_line}, bend right=15] (65) to (5);
        \draw [style={straight_line}] (68.center) to (62);
        \draw [style={straight_line}] (69.center) to (65);
        \draw [style={straight_line}] (62) to (70.center);
        \draw [style={straight_line}] (65) to (71.center);
        \draw [style={densely_dashed}, bend left=15] (11) to (77);
        \draw [style={densely_dashed}] (77) to (78);
        \draw [style={densely_dashed}] (78) to (79);
        \draw [style={densely_dashed}, bend right=15] (65) to (54);
        \draw [style={densely_dashed}] (54) to (81);
        \draw [style={densely_dashed}] (79) to (80);
      \end{pgfonlayer}
    \end{tikzpicture}

  }
  \caption{A diagramatic representation of a configuration of collisions between $4$ random walks in $H^{(2)}_{4,N}$ with $I_1=\{2,3\}$, $I_2=\{1,2\}$, $I_3=\{3,4\}$ and $I_4=\{2,3\}$. Wiggly lines represent replica evolution, see \eqref{replica_op}. }
  \label{fig:scales}
\end{figure}

\subsection{Rewiring.} \label{step_5}
Recall the expansion of $H^{\mathsf{(main)}}_{r,N,R,M}
$,
\begin{equation} \label{main_sum}
  H^{\mathsf{(main)}}_{r,N,R,M}
  :=\sum_{(I_1,...,I_r) \,\in\, \mathcal{I}^{(2)}} \,\sum_{(\vec{a},\vec{b},\vec{\bx},\vec{\by})\, \in \,\sfC^{\mathsf{(main)}}_{r,N,R,M}} \sfU^{I_1}_{b_1}(0,\by_1) \prod_{i=2}^r Q_{a_i-b_{i-1}}^{I_{i-1};I_i}(\by_{i-1},\bx_i) \,  \sfU^{I_i}_{b_i-a_i}(\bx_i,\by_i) \,\sigma_N(I_i)  \, .
\end{equation}

We also remind the reader that we may identify a partition $I=\{k,\ell\}\sqcup\bigsqcup_{j\neq k,\ell}\{j\}$ with its non-trivial part $\{k,\ell\}$. Moreover, if $\big(\{i_1,j_1\},\cdots,\{i_r,j_r\}\big) \in \cI^{(2)}$ we will use the notation
\begin{equation*} 
  \sfp(m):=\max \big\{ k<m: \,\{i_m,j_m\}=\{i_k,j_k\} \big\} \, ,
\end{equation*}
with the convention that $\sfp(m)=0$ if $\{i_k,j_k\} \neq \{i_m,j_m\}$ for all $1\leq k <m$.  Given this definition, the time $b_{\sfp(m)}$ represents the last time walks ${i_m,j_m}$ collided before their new collision at time $a_m$. Note that since we always have $\{i_k,j_k\} \neq \{i_{k+1},j_{k+1}\}$ by construction, $\sfp(m)<m-1$.

Consider a sequence of partitions $\big(\{i_1,j_1\},\dots,\{i_m,j_m\}\big) \in \cI^{(2)}$ and let $m \in \{2,\dots,r\}$. The goal of this step will be to show that we can make the replacement of weight
\begin{equation} \label{ker_repl}
  q_{a_{m}-b_{m-1}}\Big(x^{(i_m)}_m-y^{(i_m)}_{m-1}\Big)\cdot q_{a_{m}-b_{m-1}}\Big(x^{(j_m)}_m-y^{(j_m)}_{m-1}\Big) \quad\longleftrightarrow \quad q^2_{a_m-b_{\sfp(m)}}\Big(x^{(i_m)}_m-y^{(i_m)}_{\sfp(m)}\Big) \, .
\end{equation}
by inducing an error which is negligible when $M \to \infty$.
We iterate this procedure for all partitions $I_1,\dots,I_r$. We call the procedure described above {\bf rewiring}, see Figures \ref{fig:scales} and \ref{fig:rewiring}. The first step towards the full rewiring is to show the following lemma which quantifies the error of a single replacement \eqref{ker_repl}.

\begin{lemma} \label{lemma_ker_repl}
  Let $r\geq 2$ fixed and $m \in \{2,\dots,r\}$ with $I_m=\{i_m,j_m\}$. Then, for every fixed $R \in (0,\infty)$ and uniformly in $(\vec{a},\vec{b},\vec{\bx},\vec{\by})\, \in \,\sfC^{\mathsf{(main)}}_{r,N,R,M}$ and all sequences of partitions $(I_1,\dots,I_r) \in \cI^{(2)}$,
  \begin{equation} \label{loc_rew_error}
    q_{a_{m}-b_{m-1}}\Big(x^{(i_m)}_m-y^{(i_m)}_{m-1}\Big)\cdot q_{a_{m}-b_{m-1}}\Big(x^{(j_m)}_m-y^{(j_m)}_{m-1}\Big) = q^2_{a_m-b_{\sfp(m)}}\Big(x^{(i_m)}_m-y^{(i_m)}_{\sfp(m)}\Big)\cdot e^{o_{M}(1)} \, ,
  \end{equation}
  where $o_{M}(1)$ denotes a quantity such that $\lim_{M \to \infty}  o_{M}(1)=0$.
\end{lemma}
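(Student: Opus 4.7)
The plan is to apply a local central limit theorem (LCLT) to each factor on both sides of \eqref{loc_rew_error} and then to show that the two resulting Gaussian expressions differ only by a multiplicative factor $e^{o_M(1)}$, using the diffusive and scale-separation constraints encoded in $\sfC^{\mathsf{(main)}}_{r,N,R,M}$.

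First, I would extract two quantitative consequences of the scale-separation condition $a_{i+1}-b_i>M(b_i-b_{i-1})$. Iterating downward from $k=m-1$ and using $a_k\leq b_k$ gives $b_k-b_{k-1}<(a_m-b_{m-1})/M^{m-k}$ for all $\sfp(m)<k\leq m-1$; summing this geometric series yields the \emph{time comparability}
\[
b_{m-1}-b_{\sfp(m)}<\tfrac{a_m-b_{m-1}}{M-1},\qquad \tfrac{a_m-b_{\sfp(m)}}{a_m-b_{m-1}}=1+O(1/M).
\]
Since each $b_k-b_{k-1}\geq 1$, this also forces $a_m-b_{m-1}>M$, so the LCLT is effective on both sides. (The degenerate case $b_1=0$, $m=2$, $\sfp(2)=0$ is handled separately: there $y_1=0$ and both sides reduce exactly to $q^2_{a_2}(x^{(i_2)}_2)$.)

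Next, I would combine this with the diffusive constraints $\|y_k-x_k\|_\infty\leq R\sqrt{b_k-a_k}$ and $\|x_k-y_{k-1}\|_\infty\leq R\sqrt{a_k-b_{k-1}}$. Telescoping along the trajectory of each walk $\iota\in\{i_m,j_m\}$ from $b_{\sfp(m)}$ to $b_{m-1}$, using $\sqrt{a}+\sqrt{b}\leq\sqrt{2(a+b)}$ and Cauchy--Schwarz, gives
\[
|y^{(\iota)}_{m-1}-y^{(\iota)}_{\sfp(m)}|\leq R\sqrt{2r(b_{m-1}-b_{\sfp(m)})}\leq R\sqrt{\tfrac{2r}{M-1}}\,\sqrt{a_m-b_{m-1}}.
\]
Crucially, the definition of $\sfp(m)$ forces $y^{(i_m)}_{\sfp(m)}=y^{(j_m)}_{\sfp(m)}$, while $x^{(i_m)}_m=x^{(j_m)}_m$ because $I_m=\{i_m,j_m\}$.

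Finally, I would invoke the LCLT on $\Z^2$, $q_n(x)=\tfrac{2}{\pi n}e^{-|x|^2/n}\bigl(1+O(1/n)\bigr)$ uniformly for $|x|=O(\sqrt n)$ in the correct parity class. The parities on the two sides of \eqref{loc_rew_error} automatically agree, because walks $i_m,j_m$ do traverse the intermediate trajectory from $y_{\sfp(m)}$ to $y_{m-1}$. Writing $u:=x^{(i_m)}_m-y^{(i_m)}_{\sfp(m)}$ and $v_\iota:=y^{(i_m)}_{\sfp(m)}-y^{(\iota)}_{m-1}$, the identity $|u+v_\iota|^2=|u|^2+2u\cdot v_\iota+|v_\iota|^2$, combined with $|u|=O(R\sqrt{a_m-b_{m-1}})$ and the position estimate above, shows that the numerator of the Gaussian exponent changes by at most $O(R^2\sqrt{r/M})\cdot(a_m-b_{m-1})$. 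Together with the $1+O(1/M)$ factors coming from the $(\pi n)^{-2}$ prefactors and from replacing $(a_m-b_{m-1})$ by $(a_m-b_{\sfp(m)})$ in the denominator, the ratio of the two sides of \eqref{loc_rew_error} equals $e^{O(R^2\sqrt{r/M})}=e^{o_M(1)}$ for fixed $r$ and $R$.

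The main obstacle will be making the LCLT error uniform over all configurations in $\sfC^{\mathsf{(main)}}$: the $O(1/n)$ LCLT remainder must itself be $o_M(1)$, which is secured by the lower bound $a_m-b_{m-1}>M$ from the first step. A secondary subtlety is the $\Z^2$-parity in the LCLT, resolved by the observation that the very existence of the intermediate path realising a configuration in $\sfC^{\mathsf{(main)}}$ automatically ensures parity-compatibility on both sides of \eqref{loc_rew_error}.
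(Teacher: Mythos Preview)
Your proposal is correct and follows essentially the same approach as the paper's proof. Both arguments proceed by (i) deriving the time-comparability bound $b_{m-1}-b_{\sfp(m)}<(a_m-b_{m-1})/(M-1)$ from the scale-separation constraint, (ii) translating the diffusive constraints into a bound of order $R\sqrt{r/M}\cdot\sqrt{a_m-b_{m-1}}$ on the displacement $|y^{(\iota)}_{m-1}-y^{(\iota)}_{\sfp(m)}|$, and (iii) applying the local limit theorem and comparing the resulting Gaussian expressions term by term. Minor presentational differences are that the paper reduces explicitly to one factor and invokes symmetry, uses the LLT in the form $q_n(x)=g_{n/2}(x)\,e^{O(1/n+|x|^4/n^3)}$ (so that the error at $|x|\leq R\sqrt{n}$ becomes $O(R^4/n)\leq O(R^4/M)$), and obtains the time bound by a telescoping recursion rather than a geometric-series summation; these lead to the same conclusion.
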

\begin{proof}
  We will show that
  \begin{equation} \label{one_each_time}
    \begin{split}
      q_{a_{m}-b_{m-1}}\Big(x^{(i_m)}_m-y^{(i_m)}_{m-1}\Big)&=q_{a_m-b_{\sfp(m)}}\Big(x^{(i_m)}_m-y^{(i_m)}_{\sfp(m)}\Big)\cdot e^{o_{M}(1)}
    \end{split}
  \end{equation}
  and by symmetry we will get \eqref{loc_rew_error}.
  To this end, we invoke the local limit theorem for simple random walks, which we recall from \cite{LL10}. In particular, by Theorem 2.3.11 \cite{LL10}, we have that there exists $\rho>0$ such that for all $n\geq 0$ and $x \in \Z^2$ with $|x|<\rho\, n$,
  \begin{equation} \label{llt}
    q_n(x)=g_{\frac{n}{2}}(x)\cdot e^{O\Big(\frac{1}{n}+\frac{|x|^4}{n^3}\Big)}\cdot 2\cdot\ind_{\big\{(n,x) \in \Z^3_{\mathsf{even}}\big\}}\, ,
  \end{equation}
  where $g_t(x)=\frac{e^{-\frac{|x|^2}{2t}}}{2 \pi t}$ denotes the $2$-dimensional heat kernel and
  \begin{equation*}
    \Z^3_{\mathsf{even}}:=\{(n,x) \in \Z \times \Z^2:\, n+x_1+x_2=0 \,\, (\text{mod}\,\, 2)\}\, .
  \end{equation*}
  The last constraint in $\eqref{llt}$ is a consequence of the periodicity of the simple random walk. Let us proceed with the proof of Lemma \ref{lemma_ker_repl}. First, we derive two  inequalities which are going to be useful for the approximations using the local limit theorem.
\vskip 1mm
{\bf Auxiliary inequality 1.}
  We claim that
  \begin{equation} \label{long_sc_sep}
    a_m-b_{m-1}>(M-1)\,\big( b_{m-1}-b_{\sfp(m)}\big) \, .
  \end{equation}
    To this end, we start with the fact that
  \begin{align}\label{recursion}
   a_{k+1}-b_k>M(b_k-b_{k-1})\,\,\, \text{holds for all $1\leq k \leq r-1$},
   \end{align}
   which we will apply repeatedly. Starting with $k:=m-1$, we have that
  \begin{equation} \label{onestep_recursion}
    a_m-b_{m-1}>M(b_{m-1}-b_{m-2})=M(b_{m-1}-a_{m-1})+M(a_{m-1}-b_{m-2})\, .
  \end{equation}
  We can then estimate the second term in the right-hand-side as
  \begin{equation*}
    \begin{split}
      M(a_{m-1}-b_{m-2})& =(M-1)(a_{m-1}-b_{m-2})+(a_{m-1}-b_{m-2})\\
      &>(M-1)(a_{m-1}-b_{m-2})+M(b_{m-2}-b_{m-3})
    \end{split}
  \end{equation*}
  where in the last step we used \eqref{recursion} with $k:=m-2$. Inserting this into \eqref{onestep_recursion}
   we have that
  \begin{align*}
  a_m-b_{m-1} > M(b_{m-1}-a_{m-1}) + (M-1)(a_{m-1}-b_{m-2})+M(b_{m-2}-b_{m-3})
  \end{align*}
  We next decompose $M(b_{m-2}-b_{m-3})$ as we did in \eqref{onestep_recursion} for $M(b_{m-1}-b_{m-2})$ and iterating this procedure up to $\mathfrak{p}(m)$ we obtain that
  \begin{align*}
  a_m-b_{m-1} 
  &> \sum_{j=\mathfrak{p}(m)+1}^{m-1}   \Big( M(b_{j}-a_{j}) + (M-1)(a_{j}-b_{j-1}) \Big) \\
  & > (M-1) \big( b_{m-1} - b_{\mathfrak{p}(m)}\big),
 \end{align*}
 where in the last step we reduced $M(b_{j}-a_{j})$ to $(M-1)(b_{j}-a_{j})$ and then telescoped.
  \vskip 1mm
  {\bf  Auxiliary inequality  2.} As a second step, we will prove that
  \begin{equation} \label{spatial_final}
    \Big |\,\big|x^{(i_m)}_m-y^{(i_m)}_{\sfp(m)}\big|-\big|x^{(i_m)}_m-y^{(i_m)}_{m-1}\big|\, \Big|\leq R\cdot \sqrt{2r-1} \cdot \sqrt{\frac{a_m-b_{m-1}}{M-1}} \, .
  \end{equation}
  To this end, by the reverse triangle inequality we have that
  \begin{equation} \label{tr_ineq}
    \Big |\,\big|x^{(i_m)}_m-y^{(i_m)}_{\sfp(m)}\big|-\big|x^{(i_m)}_m-y^{(i_m)}_{m-1}\big|\, \Big|\leq \big|y^{(i_m)}_{m-1}-x^{(i_m)}_{m-1}\big|+\big|x^{(i_m)}_{m-1}-y^{(i_m)}_{m-2}\big|+\dots+\big|x^{(i_m)}_{\sfp(m)+1}-y^{(i_m)}_{\sfp(m)}\big| \, .
  \end{equation}
  Note that by the diffusivity constraints of $\sfC^{\mathsf{(main)}}_{r,N,R,M}$ we have that
  \begin{equation} \label{spatial_dif}
    \begin{split}
      \big|y^{(i_m)}_{m-1}-x^{(i_m)}_{m-1}\big|+\big|x^{(i_m)}_{m-1}-y^{(i_m)}_{m-2}\big|+\dots+&\big|x^{(i_m)}_{\sfp(m)+1}-y^{(i_m)}_{\sfp(m)}\big| \\
      \leq &R\cdot \bigg(\sum_{k=\sfp(m)}^{m-2}\sqrt{a_{k+1}-b_k}+\sum_{k=\sfp(m)+1}^{m-1}\sqrt{b_k-a_k}\bigg)\, .
    \end{split}
  \end{equation}
  By Cauchy-Schwarz on the right hand side of \eqref{spatial_dif}, \eqref{long_sc_sep} and the fact that $m\leq r$, we furthermore have that
  \begin{equation}\label{eq:CS}
    \begin{split}
      \Big(\sum_{k=\sfp(m)}^{m-2}\sqrt{a_{k+1}-b_k}+\sum_{k=\sfp(m)+1}^{m-1}\sqrt{b_k-a_k}\Big)
      & \leq \sqrt{2r-1} \,\bigg(\sum_{k=\sfp(m)}^{m-2}(a_{k+1}-b_k)+\sum_{k=\sfp(m)+1}^{m-1}(b_k-a_k) \bigg)^{1/2}\\
      &= \sqrt{2r-1}\cdot \sqrt{b_{m-1}-b_{\sfp(m)}} \\
      & \leq \sqrt{2r-1} \cdot \sqrt{\frac{a_m-b_{m-1}}{M-1}} \, .
    \end{split}
  \end{equation}
   Combining \eqref{eq:CS} and \eqref{tr_ineq} we arrive at \eqref{spatial_final}.
  \vskip 2mm
  Now, we are ready to show approximation \eqref{one_each_time}. By \eqref{llt} we have
  \begin{equation} \label{err_handling}
    \begin{split}
      &\frac{q_{a_{m}-b_{m-1}}\big(x^{(i_m)}_m-y^{(i_m)}_{m-1}\big)}{q_{a_m-b_{\sfp(m)}}\big(x^{(i_m)}_m-y^{(i_m)}_{\sfp(m)}\big)}\\
      &=e^{-\frac{|x^{(i_m)}_m-y^{(i_m)}_{m-1}|^2}{a_{m}-b_{m-1}}+\frac{|x^{(i_m)}_m-y^{(i_m)}_{\sfp(m)}|^2}{a_m-b_{\sfp(m)}}} \cdot \bigg( \frac{a_m-b_{\sfp(m)}}{a_{m}-b_{m-1}}\bigg) \cdot
      e^{O\Big(\frac{1}{a_{m}-b_{m-1}}+\frac{|x^{(i_m)}_m-y_{m-1}^{(i_m)}|^4+|x^{(i_m)}_m-y^{(i_m)}_{\sfp(m)}|^4}{(a_m-b_{m-1})^3}\Big)}  \, .
    \end{split}
  \end{equation}
  Let us look at each term on the right hand side of \eqref{err_handling}, separately.
  First, we have
  \begin{equation*}
    \begin{split}
      e^{-\frac{\big|x^{(i_m)}_m-y^{(i_m)}_{m-1}\big|^2}{a_{m}-b_{m-1}}+\frac{\big|x^{(i_m)}_m-y^{(i_m)}_{\sfp(m)}\big|^2}{a_m-b_{\sfp(m)}}}  &\leq e^{\frac{1}{a_{m}-b_{m-1}}\Big( \, \big|x^{(i_m)}_m-y^{(i_m)}_{\sfp(m)}\big|^2 -\big|x^{(i_m)}_m-y^{(i_m)}_{m-1}\big|^2 \,\Big) } \\
      &= e^{\frac{1}{a_{m}-b_{m-1}}\, \Big(\big|x^{(i_m)}_m-y^{(i_m)}_{\sfp(m)}\big| +\big|x^{(i_m)}_m-y^{(i_m)}_{m-1}\big|\Big)\cdot \Big(\big|x^{(i_m)}_m-y^{(i_m)}_{\sfp(m)}\big| -\big|x^{(i_m)}_m-y^{(i_m)}_{m-1}\big|\Big) }\, .
    \end{split}
  \end{equation*}
  and by using \eqref{spatial_final} we have
  \begin{equation*}
    \begin{split}
      \big|x^{(i_m)}_m-y^{(i_m)}_{\sfp(m)}\big| +\big|x^{(i_m)}_m-y^{(i_m)}_{m-1}\big| &\leq 2 \big|x^{(i_m)}_{m}-y^{(i_m)}_{m-1}\big|+R\sqrt{2r-1}\sqrt{\frac{a_m-b_{m-1}}{M-1}} \\
      &\leq 2R\sqrt{a_m-b_{m-1}} + R \sqrt{2r-1} \sqrt{\frac{a_m-b_{m-1}}{M-1}} \\
      & = R \sqrt{a_m-b_{m-1}} \Big(2+\sqrt{\frac{2r-1}{M-1}}\Big) \, .
    \end{split}
  \end{equation*}
  Therefore, by \eqref{spatial_final}, again, we get
  \begin{equation*}
    e^{\frac{1}{a_{m}-b_{m-1}}\, \Big(\big|x^{(i_m)}_m-y^{(i_m)}_{\sfp(m)}\big| +\big|x^{(i_m)}_m-y^{(i_m)}_{m-1}\big|\Big)\cdot \Big(\big|x^{(i_m)}_m-y^{(i_m)}_{\sfp(m)}\big| -\big|x^{(i_m)}_m-y^{(i_m)}_{m-1}\big|\Big) }\leq e^{R^2 \Big(2+\sqrt{\frac{2r-1}{M-1}} \Big)\sqrt{\frac{2r-1}{M-1}}} \, .
  \end{equation*}
  Similarly, we can get a lower bound of
  \begin{equation*}
    e^{-\frac{\big|x^{(i_m)}_m-y^{(i_m)}_{m-1}\big|^2}{a_{m}-b_{m-1}}+\frac{\big|x^{(i_m)}_m-y^{(i_m)}_{\sfp(m)}\big|^2}{a_m-b_{\sfp(m)}}} \geq e^{-R^2\big(1-\frac{1}{M}\big) \Big(2+\sqrt{\frac{2r-1}{M-1}} \Big)\sqrt{\frac{2r-1}{M-1}}} \, ,
  \end{equation*}
  since $ a_m-b_{\sfp(m)}<\big(1+\frac{1}{M-1}\big)(a_m-b_{m-1})$ by \eqref{long_sc_sep}. The second term in \eqref{err_handling} can be handled by \eqref{long_sc_sep} as
  \begin{equation*}
    1\leq   \Big( \frac{a_m-b_{\sfp(m)}}{a_{m}-b_{m-1}}\Big) =\Big( 1+\frac{b_{m-1}-b_{\sfp(m)}}{a_{m}-b_{m-1}}\Big) < 1+\frac{1}{M-1} \xrightarrow{M \to \infty} 1\, .
  \end{equation*}
  For the last term in \eqref{err_handling} we have that
  \begin{equation*}
    \begin{split}
      & \frac{\big|x^{(i_m)}_m-y_{m-1}^{(i_m)}\big|^4+ \big|x^{(i_m)}_m-y^{(i_m)}_{\sfp(m)}\big|^4}{(a_{m}-b_{m-1})^3}\\
      &\qquad \qquad\qquad \qquad \leq \frac{\big|x^{(i_m)}_m-y_{m-1}^{(i_m)}\big|^4}{(a_{m}-b_{m-1})^3}+\frac{\Big(|x^{(i_m)}_m-y_{m-1}^{(i_m)}|+R \cdot\sqrt{2r-1}\cdot\sqrt{\frac{(a_{m}-b_{m-1})}{M-1}}\, \Big)^4}{(a_{m}-b_{m-1})^3} \\
      & \qquad \qquad \qquad \qquad \leq \frac{9\,R^4}{(a_{m}-b_{m-1})}+\frac{8R^4(2r-1)^2}{(a_{m}-b_{m-1})\cdot (M-1)^2}\, ,
    \end{split}
  \end{equation*}
  where we used \eqref{spatial_final} along with the inequality $(x+y)^4\leq 8(x^4+y^4)$ for $x,y \in \R$. Therefore,
  \begin{equation*}
    e^{ \frac{\big|x^{(i_m)}_m-y_{m-1}^{(i_m)}\big|^4+\big|x^{(i_m)}_m-y^{(i_m)}_{\sfp(m)}\big|^4}{(a_{m}-b_{m-1})^3}} \leq e^{\frac{9\,R^4}{(a_{m}-b_{m-1})}+\frac{8R^4(2r-1)^2}{(a_{m}-b_{m-1})\cdot {(M-1)^2}}} \leq  e^{\frac{9\,R^4}{M}+\frac{8R^4(2r-1)^2}{M \cdot {(M-1)^2}}}  \xrightarrow{M \to \infty} 1\, ,
  \end{equation*}
  where we used in the last inequality that $a_m-b_{m-1}>M(b_{m-1}-b_{m-2})\geq M$ by \eqref{C_main_set}.
\end{proof}

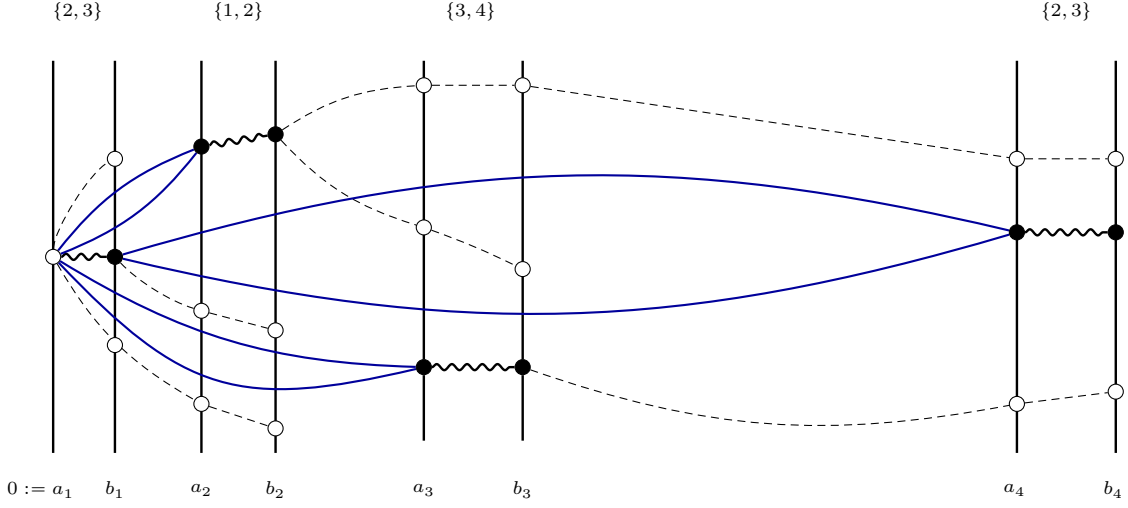
\begin{figure}
  \tikzstyle{filled_vertex}=[fill={black}, draw={black}, shape=circle,inner sep=2]
  \tikzstyle{new style 0}=[fill=white, draw={black}, shape=circle, inner sep=2pt]
  \tikzstyle{straight_line}=[-, fill=none, draw={black},line width=0.9pt]
  \tikzstyle{wiggly_line}=[-, fill=none, draw={black}, decorate=true, decoration={snake,amplitude=.4mm,segment length=2mm},line width=0.9pt]
  \tikzstyle{densely_dashed}= [dash pattern=on 3pt off 2pt, draw={black}]
  \tikzstyle{rewired}=[-, draw={rgb,255: red,128; green,0; blue,128},line width=0.8pt]
  \tikzstyle{none}=[inner sep=0pt]
  \tikzstyle{straight_line_colored}=[-, fill=none, draw={dukeblue}, line width=0.8pt]
  \pgfdeclarelayer{edgelayer}
  \pgfdeclarelayer{nodelayer}
  \pgfsetlayers{edgelayer,nodelayer,main}
  \usetikzlibrary{decorations.pathmorphing}

  { \tiny

    \begin{tikzpicture}[scale=0.65]
      \begin{pgfonlayer}{nodelayer}
        \node [style=none] (1) at (0, 4) {};
        \node [style=none] (2) at (0, -4) {};
        \node [style=new style 0] (4) at (0, 0) {};
        \node [style={filled_vertex}] (5) at (19.5, 0.5) {};
        \node [style=none] (6) at (1.25, 4) {};
        \node [style=none] (7) at (1.25, -4) {};
        \node [style={filled_vertex}] (8) at (1.25, 0) {};
        \node [style={filled_vertex}] (9) at (3, 2.25) {};
        \node [style={filled_vertex}] (11) at (4.5, 2.5) {};
        \node [style=none] (14) at (3, 4) {};
        \node [style=none] (15) at (3, -4) {};
        \node [style=none] (16) at (4.5, 4) {};
        \node [style=none] (17) at (4.5, -4) {};
        \node [style={filled_vertex}] (18) at (21.5, 0.5) {};
        \node [style=none] (19) at (19.5, 4) {};
        \node [style=none] (20) at (21.5, 4) {};
        \node [style=none] (21) at (19.5, -4) {};
        \node [style=none] (22) at (21.5, -4) {};
        \node [style=new style 0] (27) at (3, -3) {};
        \node [style=new style 0] (28) at (4.5, -3.5) {};
        \node [style=none] (30) at (-0.25, -4.75) {$0:=a_1$};
        \node [style=none] (31) at (1.25, -4.75) {$b_1$};
        \node [style=none] (32) at (3, -4.75) {$a_2$};
        \node [style=none] (33) at (4.5, -4.75) {$b_2$};
        \node [style=none] (34) at (7.5, -4.75) {$a_3$};
        \node [style=none] (35) at (9.5, -4.75) {$b_3$};
        \node [style=new style 0] (36) at (1.25, 2) {};
        \node [style=new style 0] (37) at (1.25, -1.8) {};
        \node [style=none] (59) at (0.5, 5) {$\{2,3\}$};
        \node [style=none] (60) at (3.75, 5) {$\{1,2\}$};
        \node [style=none] (61) at (20.5, 5) {$\{2,3\}$};
        \node [style={filled_vertex}] (62) at (7.5, -2.25) {};
        \node [style=new style 0] (63) at (3, -1.1) {};
        \node [style=new style 0] (64) at (4.5, -1.5) {};
        \node [style={filled_vertex}] (65) at (9.5, -2.25) {};
        \node [style=none] (68) at (7.5, 4) {};
        \node [style=none] (69) at (9.5, 4) {};
        \node [style=none] (70) at (7.5, -3.75) {};
        \node [style=none] (71) at (9.5, -3.75) {};
        \node [style=none] (72) at (19.45, -4.75) {$a_4$};
        \node [style=none] (73) at (21.45, -4.75) {$b_4$};
        \node [style=none] (74) at (8.45, 5) {$\{3,4\}$};
        \node [style=new style 0] (75) at (7.5, 0.6) {};
        \node [style=new style 0] (76) at (9.5, -0.25) {};
        \node [style=new style 0] (77) at (7.5, 3.5) {};
        \node [style=new style 0] (78) at (9.5, 3.5) {};
        \node [style=new style 0] (79) at (19.5, 2) {};
        \node [style=new style 0] (80) at (21.5, 2) {};
        \node [style=new style 0] (81) at (21.5, -2.75) {};
        \node [style=new style 0] (54) at (19.5, -3) {};
      \end{pgfonlayer}
      \begin{pgfonlayer}{edgelayer}
        \draw [style={straight_line}] (1.center) to (2.center);
        \draw [style={straight_line}] (6.center) to (7.center);
        \draw [style={straight_line}] (14.center) to (15.center);
        \draw [style={straight_line}] (16.center) to (17.center);
        \draw [style={straight_line}] (19.center) to (21.center);
        \draw [style={straight_line}] (20.center) to (22.center);
        \draw [style={straight_line}] (68.center) to (62);
        \draw [style={straight_line}] (69.center) to (65);
        \draw [style={straight_line}] (62) to (70.center);
        \draw [style={straight_line}] (65) to (71.center);

        \draw [style={wiggly_line}] (4) to (8);
        \draw [style={wiggly_line}] (9) to (11);
        \draw [style={wiggly_line}] (5) to (18);
        \draw [style={wiggly_line}] (62) to (65);

        \draw [style={densely_dashed}, bend right=15] (4) to (27);
        \draw [style={densely_dashed}] (27) to (28);
        \draw [style={densely_dashed}, bend right=15] (11) to (75);
        \draw [style={densely_dashed}, bend left=3] (75) to (76);
        \draw [style={densely_dashed}, bend right=15, looseness=0.75] (8) to (63);
        \draw [style={densely_dashed}] (63) to (64);
        \draw [style={densely_dashed}, bend left, looseness=0.50] (4) to (36);
        \draw [style={densely_dashed}, bend left=15] (11) to (77);
        \draw [style={densely_dashed}] (77) to (78);
        \draw [style={densely_dashed}] (78) to (79);
        \draw [style={densely_dashed}, bend right=15] (65) to (54);
        \draw [style={densely_dashed}] (54) to (81);
        \draw [style={densely_dashed}] (79) to (80);

        \draw [style={straight_line_colored}, bend right=15] (5) to (8);
        \draw [style={straight_line_colored}, bend left=15] (5) to (8);
        \draw [style={straight_line_colored}, bend left, looseness=1.25] (62) to (4);
        \draw [style={straight_line_colored}, bend right=345] (62) to (4);
        \draw [style={straight_line_colored}, bend right=15] (9) to (4);
        \draw [style={straight_line_colored}, bend right=15] (4) to (9);
      \end{pgfonlayer}
    \end{tikzpicture}

  }
  \caption{Figure \ref{fig:scales} after {\em rewiring}. We use blue lines to represent the new kernels produced by rewiring. The dashed lines represent remaining free kernels from the rewiring procedure as well as kernels coming from using the Chapman-Kolmogorov formula for the simple random walk. }
  \label{fig:rewiring}
\end{figure}
\subsection{Final step} \label{sec:finalstep}
Now that we have Lemma \ref{lemma_ker_repl} at our disposal, we can prove the main approximation result of this step. Recall from \eqref{main_sum} that
\begin{equation*}
  \begin{split}
    & H^{\mathsf{(main)}}_{r,N,R,M}\\
    &\quad = \sum_{(I_1,...,I_r) \,\in\, \mathcal{I}^{(2)}} \,\sum_{(\vec{a},\vec{b},\vec{\bx},\vec{\by})\, \in \,\sfC^{\mathsf{(main)}}_{r,N,R,M}} \sfU^{I_1}_{b_1}(0,\by_1) \prod_{i=2}^r Q_{a_i-b_{i-1}}^{I_{i-1};I_i}(\by_{i-1},\bx_i) \,  \sfU^{I_i}_{b_i-a_i}(\bx_i,\by_i) \,\sigma_N(I_i)  \, .
  \end{split}
\end{equation*}

Define $H^{\mathsf{(rew)}}_{r,N,R,M}$ to be the resulting sum after rewiring has been applied to every term of $H^{\mathsf{(main)}}_{r,N,R,M}$, that is,
given a sequence of partitions $(I_1,\dots,I_r)\in \mathcal{I}^{(2)}$ and $(\vec{a},\vec{b},\vec{\bx},\vec{\by})\, \in \,\sfC^{\mathsf{(main)}}_{r,N,R,M}$, we apply the kernel replacement \eqref{ker_repl}
to all partitions $I_1,\dots,I_r$ starting from $I_r$ and moving backward. We remind the reader that 
we may denote a partition $I=\{i,j\} \sqcup \bigsqcup_{k \neq i,j}\{k\} \in \cI^{(2)}$ by its non-trivial part $\{i,j\}$, see subsection \ref{chaos_expansion_for_many_body_collisions}.
\begin{proposition} \label{introduction_of_rewired}
  Fix $0\leq r\leq K$. We have that
  \begin{equation} \label{main_rewired_comparison}
    H^{\mathsf{(rew)}}_{r,N,R,M}=e^{ K\cdot o_M(1)}\, H^{\mathsf{(main)}}_{r,N,R,M}
  \end{equation}
  and
  \begin{equation} \label{rewired_expansion}
    \begin{split}
      &H^{\mathsf{(rew)}}_{r,N,R,M} \\
      &=\hspace{-0.3cm} \sum_{\big(\{i_1,j_1\},\dots,\{i_r,j_r\}\big) \in \, \cI^{(2)}} \,\,  \sum_{(\vec{a},\vec{b},\vec{\bx},\vec{\by})\, \in \,\sfC^{\mathsf{(main)}}_{r,N,R,M}} \prod_{k=1}^r U^{\beta_{i_k,j_k}}_N\big(b_k-a_k,y^{(i_k)}_k-x^{(i_k)}_k\big) \cdot \prodtwo{1\leq \ell \leq h, }{\ell \neq i_k,j_k} q_{b_k-a_k}(y^{(\ell)}_k-x^{(\ell)}_k) \\
      & \qquad \qquad \times \prod_{m=2}^r \sigma^{i_m,j_m}_N \cdot q^{2}_{a_{m}-b_{\sfp(m)}}\big(x^{(i_m)}_{m}- y^{(i_{m})}_{\sfp(m)} \big) \cdot 
      \prodtwo{1\leq \ell \leq h, }{\ell \neq i_m,j_m} q_{a_m-b_{m-1}}\big(x^{(\ell)}_m-y^{(\ell)}_{m-1}\big) \, .
    \end{split}
  \end{equation}
\end{proposition}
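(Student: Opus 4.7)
The plan is to prove \eqref{main_rewired_comparison} and \eqref{rewired_expansion} together, by iterating Lemma~\ref{lemma_ker_repl} over the indices $m=r, r-1, \dots, 2$. The starting point is to unfold the two operators in \eqref{main_sum}. Since every $I_k \in \cI^{(2)}$ has the form $I_k = \{i_k,j_k\} \sqcup \bigsqcup_{\ell \neq i_k,j_k}\{\ell\}$, the constraints $\bx_k,\by_k \sim I_k$ force $x^{(i_k)}_k = x^{(j_k)}_k$ and $y^{(i_k)}_k = y^{(j_k)}_k$, so by \eqref{replica_op}
\[
\sfU^{I_k}_{b_k-a_k}(\bx_k,\by_k) = U^{\beta_{i_k,j_k}}_N\bigl(b_k-a_k,\,y^{(i_k)}_k-x^{(i_k)}_k\bigr)\ \prodtwo{1\leq \ell \leq h}{\ell\neq i_k,j_k} q_{b_k-a_k}\bigl(y^{(\ell)}_k-x^{(\ell)}_k\bigr),
\]
while \eqref{free_evol} expresses $Q^{I_{m-1};I_m}_{a_m-b_{m-1}}(\by_{m-1},\bx_m)$ as the product over $\ell=1,\dots,h$ of the free kernels $q_{a_m-b_{m-1}}(x^{(\ell)}_m - y^{(\ell)}_{m-1})$ (modulated by the partition indicators). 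In particular the transition at step $m$ contains exactly the pair of factors $q_{a_m-b_{m-1}}(x^{(i_m)}_m - y^{(i_m)}_{m-1})\cdot q_{a_m-b_{m-1}}(x^{(j_m)}_m - y^{(j_m)}_{m-1})$ to which Lemma~\ref{lemma_ker_repl} applies.

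Next, for each fixed $(\vec{a},\vec{b},\vec{\bx},\vec{\by}) \in \sfC^{\mathsf{(main)}}_{r,N,R,M}$ and each $m \in \{2,\dots,r\}$, Lemma~\ref{lemma_ker_repl} allows us to replace this pair by the single factor $q^{2}_{a_m-b_{\sfp(m)}}(x^{(i_m)}_m - y^{(i_m)}_{\sfp(m)})$ at the cost of a uniform multiplicative error $e^{o_M(1)}$. Crucially, the rewirings at different values of $m$ act on disjoint pairs of free kernels (the only kernels touched by step $m$ are the two $i_m,j_m$-kernels of the transition $b_{m-1}\to a_m$), so the replacements commute and their errors simply multiply. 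Carrying them out successively from $m=r$ down to $m=2$ contributes at most $r-1 \leq K$ such errors, giving
\[
H^{\mathsf{(rew)}}_{r,N,R,M} = e^{(r-1)\,o_M(1)}\,H^{\mathsf{(main)}}_{r,N,R,M} = e^{K\cdot o_M(1)}\,H^{\mathsf{(main)}}_{r,N,R,M},
\]
which is \eqref{main_rewired_comparison}.

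Finally, \eqref{rewired_expansion} is read off by collecting the remaining factors. For each $k=1,\dots,r$ the replica evolution contributes $U^{\beta_{i_k,j_k}}_N(b_k-a_k,\,y^{(i_k)}_k-x^{(i_k)}_k)$ together with the $h-2$ free kernels $q_{b_k-a_k}(y^{(\ell)}_k-x^{(\ell)}_k)$ for $\ell\neq i_k,j_k$; for each $m=2,\dots,r$, the rewired transition contributes $\sigma^{i_m,j_m}_N \cdot q^{2}_{a_m-b_{\sfp(m)}}(x^{(i_m)}_m - y^{(i_m)}_{\sfp(m)})$ together with the $h-2$ free kernels $q_{a_m-b_{m-1}}(x^{(\ell)}_m-y^{(\ell)}_{m-1})$ for $\ell\neq i_m,j_m$. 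Multiplying these factors across $k$ and $m$ yields exactly the product on the right-hand side of \eqref{rewired_expansion}, with the convention that $\sfp(m)=0$ corresponds to $y^{(i_m)}_{0}=0$ as encoded by $\bx_1=\boldsymbol{0}$ in \eqref{C_set}. The real work of the step is in Lemma~\ref{lemma_ker_repl} itself; once it is granted, Proposition~\ref{introduction_of_rewired} is essentially a bookkeeping matter, the only mild care needed being to verify that the squared kernel $q^2_{a_m-b_{\sfp(m)}}(x^{(i_m)}_m - y^{(i_m)}_{\sfp(m)})$ is unambiguously defined, which follows from $x^{(i_m)}_m=x^{(j_m)}_m$ and $y^{(i_m)}_{\sfp(m)}=y^{(j_m)}_{\sfp(m)}$.
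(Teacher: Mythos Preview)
Your proposal is correct and follows the same approach as the paper: apply Lemma~\ref{lemma_ker_repl} once for each $m\in\{2,\dots,r\}$, note that the replacements act on disjoint pairs of free kernels so the multiplicative errors compound to $e^{(r-1)\,o_M(1)}=e^{K\cdot o_M(1)}$, and read off the resulting product. The paper's own proof is a one-line reference to Lemma~\ref{lemma_ker_repl} and the bound $r\le K$; your version simply spells out the unfolding of $\sfU^{I_k}$ and $Q^{I_{m-1};I_m}$ and the bookkeeping more explicitly.
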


\begin{proof}
  Equation \eqref{main_rewired_comparison} is a consequence of Lemma \ref{lemma_ker_repl} and the fact that $r\leq K$, while expansion \eqref{rewired_expansion} is a direct consequence of the rewiring procedure we described in the previous step, see also 
  Figures \ref{fig:scales} and \ref{fig:rewiring}.
\end{proof}
Next, we derive upper and lower bounds for $H^{\mathsf{(rew)}}_{r,N,R,M}$. We begin with the upper bound.

\begin{proposition} \label{rewired_upper_bound}
  We have that
  \begin{equation*}
    \begin{split}
      H^{\mathsf{(rew)}}_{r,N,R,M} \leq \sum_{\big(\{i_1,j_1\},\dots,\{i_r,j_r\}\big) \in  \, \mathcal{I}^{(2)}} \,\,\, \sumtwo{0:=a_1\leq b_1<a_2\leq \dots < a_r\leq b_r \leq N,}{0:=x_1,y_1,\dots,x_r,y_r \in \Z^2} \prod_{k=1}^r U^{\beta_{i_k,j_k}}_N\big(b_k-a_k,y_k-x_k\big) \, \\
      \times \, \prod_{m=2}^r \sigma^{i_m,j_m}_N \cdot q^{2}_{a_{m}-b_{\sfp(m)}}\big(x_{m}-y_{\sfp(m)}\big) \, .
    \end{split}
  \end{equation*}
\end{proposition}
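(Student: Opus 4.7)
The plan is to derive the claimed upper bound from the exact identity \eqref{rewired_expansion} in two independent moves: relaxing the constraint set $\sfC^{\mathsf{(main)}}_{r,N,R,M}$ down to its bare temporal skeleton, and then integrating out every spatial coordinate that is not part of the collision \emph{backbone} $x_k := x^{(i_k)}_k = x^{(j_k)}_k$, $y_k := y^{(i_k)}_k = y^{(j_k)}_k$ in $\Z^2$. Since every summand in \eqref{rewired_expansion} is non-negative, the first move is immediate: I enlarge the sum by dropping the diffusive truncation from \eqref{C_dif_set} and the scale-separation $a_{i+1}-b_i > M(b_i - b_{i-1})$, keeping only the temporal ordering $0 = a_1 \leq b_1 < a_2 \leq \cdots \leq b_r \leq N$, the starting constraint $\bx_1 = \boldsymbol{0}$, and the collision relations $\bx_k, \by_k \sim I_k$ inherited at the collision times.

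The second move is an equality. For each walk $\ell \in \{1,\dots,h\}$ and each time $b_k$ (resp.\ $a_k$, $k\geq 2$) at which $\ell \notin \{i_k, j_k\}$, its position $y^{(\ell)}_k$ (resp.\ $x^{(\ell)}_k$) is unconstrained, and I claim it can be summed out to give a factor $1$. The crucial feature of the rewired expansion is that every such non-backbone coordinate appears \emph{only} inside a free transition kernel $q_{\Delta t}(\Delta x)$ of the single walk $\ell$: the replica factor $U^{\beta_{i_k, j_k}}_N(b_k - a_k, y_k - x_k)$ and the reconnection kernel $q^2_{a_m - b_{\sfp(m)}}(x_m - y_{\sfp(m)})$ depend exclusively on backbone variables, and the rewiring has \emph{removed} precisely the free kernel $q_{a_m - b_{m-1}}(x^{(\ell)}_m - y^{(\ell)}_{m-1})$ for $\ell \in \{i_m, j_m\}$, which was the only factor tying walk $\ell$'s intermediate positions on the interval $(b_{\sfp(m)}, a_m)$ to its new collision point $x_m$. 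Consequently, every non-backbone position of walk $\ell$ sits in a chain of single-walk free kernels linking two consecutive \emph{anchors} of $\ell$ (these being either a collision event in which $\ell$ takes part or the origin $(0, \boldsymbol{0})$), which telescopes via Chapman-Kolmogorov into a single free kernel; whenever the chain terminates at a non-anchored endpoint, the final sum $\sum_{z\in\Z^2} q_n(z) = 1$ eliminates the entire chain.

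Performing these summations in outward-in order (starting from the most rightward free position of each walk and propagating leftward) leaves only the backbone variables $(x_k, y_k)_{k=1}^r$, together with the factors $U^{\beta_{i_k,j_k}}_N(b_k - a_k, y_k - x_k)$, the reconnection kernels $q^2_{a_m - b_{\sfp(m)}}(x_m - y_{\sfp(m)})$ (with the convention $y_0 := 0$), and the coupling constants $\sigma^{i_m, j_m}_N$, reproducing exactly the right-hand side of the claim. The main obstacle is the walk-by-walk bookkeeping: one must verify that between any two consecutive anchors of a generic walk $\ell$ the intermediate positions appear solely in a chain of single-walk free kernels, paying particular attention to the case where walk $\ell \in \{i_m, j_m\}$ is incidentally anchored at some backbone position $(x_k, y_k)$ with $\sfp(m) < k < m$ as part of a \emph{different} pair. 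In that subcase, the rewired kernel at step $m$ remains independent of this intermediate anchor (depending only on $y_{\sfp(m)}$ and $x_m$), so the chains on either side of the intermediate anchor telescope independently and the argument goes through unchanged.
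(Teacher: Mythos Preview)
Your proposal is correct and follows essentially the same approach as the paper: first enlarge the summation domain from $\sfC^{\mathsf{(main)}}_{r,N,R,M}$ to $\sfC_{r,N}$ by non-negativity, then sum out the non-backbone spatial coordinates from right to left using $\sum_{z\in\Z^2} q_n(z)=1$, relying on the key structural fact that rewiring has severed the free kernels of walks $i_m,j_m$ on $[b_{m-1},a_m]$ so that every non-backbone position becomes a free endpoint once the variables to its right have been integrated. Your explicit treatment of the case where a walk $\ell\in\{i_m,j_m\}$ is anchored at an intermediate backbone point $(x_k,y_k)$ with $\sfp(m)<k<m$ as part of a different pair is a careful addition that the paper leaves implicit in its inductive step.
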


\begin{proof}
  Fix $r\geq 1$ and from \eqref{rewired_expansion} recall that
  \begin{equation} \label{rewired_expansion_repeat}
    \begin{split}
      &H^{\mathsf{(rew)}}_{r,N,R,M}\\
      &\quad=\hspace{-0.3cm} \sum_{\big(\{i_1,j_1\},\dots,\{i_r,j_r\}\big) \in  \, \mathcal{I}^{(2)}} \,\,\, \sum_{(\vec{a},\vec{b},\vec{\bx},\vec{\by})\, \in \,\sfC^{\mathsf{(main)}}_{r,N,R,M}} \prod_{k=1}^r U^{\beta_{i_k,j_k}}_N\big(b_k-a_k,y^{(i_k)}_k-x^{(i_k)}_k\big) \cdot 
      \prodtwo{1\leq \ell \leq h, }{\ell \neq i_k,j_k} q_{b_k-a_k}(y^{(\ell)}_k-x^{(\ell)}_k) \\
      & \qquad \qquad \qquad\times \prod_{m=2}^r \sigma^{i_m,j_m}_N \cdot q^{2}_{a_{m}-b_{\sfp(m)}}\big(x^{(i_m)}_{m}-y^{(i_{m})}_{\sfp(m)} \big) \cdot 
      \prodtwo{1\leq \ell \leq h, }{\ell \neq i_m,j_m} q_{a_m-b_{m-1}}\big(x^{(\ell)}_m-y^{(\ell)}_{m-1}\big) \, .
    \end{split}
  \end{equation}
  For the sake of obtaining an upper bound on $H^{\mathsf{(rew)}}_{r,N,R,M}$ we can sum $(\vec{a},\vec{b},\vec{\bx},\vec{\by})$ in \eqref{rewired_expansion_repeat} over $\sfC_{r,N}$, see definition in \eqref{C_set}, instead of
  $\sfC^{\mathsf{(main)}}_{r,N,R,M}$.
  We start the summation of the right hand side of \eqref{rewired_expansion_repeat} from the end. Using that for $n \in \N$, $\sum_{z \in \Z^2} q_n(z)=1$ we deduce that
  \begin{equation*}
    \sumtwo{y_r^{(\ell)}\in \Z^2:}{1\leq \ell \leq h, \,\ell \neq i_r,j_r}\, \prod_{\ell \neq i_r,j_r} q_{b_r-a_r}(y^{(\ell)}_r-x^{(\ell)}_r)= 1\, .
  \end{equation*}
  We leave the sum $\sum_{b_r \in [a_r,N], \, y^{(i_r)}_r \in \Z^2} U_N\big(b_r-a_r,y_r^{(i_r)}-x_r^{(i_r)}\big)$ intact and move on to the time interval 
  $[b_{r-1},a_r]$. We use again that for $n \in \N$, $\sum_{z \in \Z^2} q_n(z)=1$, to deduce that
  \begin{equation*}
    \sumtwo{x_r^{(\ell)}\in \Z^2:}{1\leq \ell \leq h, \,\ell \neq i_r,j_r}\,\prod_{\ell \neq i_r,j_r}\,q_{a_r-b_{r-1}}(x^{(\ell)}_r-y^{(\ell)}_{r-1}) =1 \, .
  \end{equation*}
  Again, we leave the sum $\sum_{a_r \in \, (b_{r-1},b_r], \,x_r^{(i_r)} \in \Z^2}\, q_{a_r-b_{\sfp(r)}}^2\big(x^{(i_r)}_r- y^{(i_{r})}_{\sfp(r)} \big)$ intact. We can iterate this procedure inductively since due to rewiring all the spatial variables $y^{(\ell)}_{r-1}$, $\ell \neq i_{r-1},j_{r-1}$ are free, that is, there are no outgoing laces starting off $y^{(\ell)}_{r-1}$, $\ell \neq i_{r-1},j_{r-1}$ at time $b_{r-1}$. The summations we have performed correspond to getting rid of the dashed lines in Figure \ref{fig:rewiring}.
  Iterating this procedure inductively then implies the following upper bound for $H^{\mathsf{(rew)}}_{r,N,R,M}$.
  \begin{equation*}
    \begin{split}
      H^{\mathsf{(rew)}}_{r,N,R,M}
      \leq \sum_{\big(\{i_1,j_1\},\dots,\{i_r,j_r\}\big) \in  \, \mathcal{I}^{(2)}} \,\, \sumtwo{0:=a_1\leq b_1<a_2\leq \dots < a_r\leq b_r \leq N\, ,}{0:=x_1,y_1,\dots,x_r,y_r \in \Z^2} \prod_{k=1}^r U^{\beta_{i_k,j_k}}_N\big(b_k-a_k,y_k-x_k\big) \, \\
      \times \, \prod_{m=2}^r \sigma^{i_m,j_m}_N \cdot q^{2}_{a_{m}-b_{\sfp(m)}}\big(x_{m}-y_{\sfp(m)}\big) \, .
    \end{split}
  \end{equation*}

\end{proof}

In the next proposition we derive complementary lower bounds for $H^{\mathsf{(rew)}}_{r,N,R,M}$. Given $0 \leq r \leq K$ and a sequence of partitions $\vec{I}=(\{i_1,j_1\},\dots,\{i_r,j_r\}) \in \mathcal{I}^{(2)}$ we define the set $\sfC^{\mathsf{(rew)}}_{r,N,R,M}(\vec{I}\,)$ to be $\sfC^{\mathsf{(main)}}_{r,N,R,M}$ where for every $2\leq m \leq r$ we replace the diffusivity constraint
$\norm{\bx_m-\by_{m-1}}_{\infty} \leq R \sqrt{a_m-b_{m-1}}$
by the constraints
\begin{equation*}
  \begin{split}
    &|x^{(\ell)}_m-y^{(\ell)}_{m-1}|\leq R\sqrt{a_m-b_{m-1}},\, \ell \in \{1,\dots,h\}\smallsetminus \{i_m,j_m\} \quad \text{and} \\
    &\quad  |x^{(\ell')}_m-y^{(\ell')}_{\sfp(m)}|\leq R \sqrt{1-\frac{1}{M}}\bigg(1-\sqrt{\frac{2K-1}{M-1}}\bigg) \sqrt{a_m-b_{\sfp(m)}}, \, \, \ell' \in \{i_m,j_m\} \, .
  \end{split}
\end{equation*}

This replacement transforms the diffusivity constraints imposed on the jumps of two walks $\{i_m,j_m\}$ from their respective positions at time $b_{m-1}$ to time $a_m$, which is the time they (re)start colliding, to a diffusivity constraint connecting their common position at time $b_{\sfp(m)}$, which is the last time they collided before time $a_m$, to their common position at time $a_m$ when they start colliding again.

We have the following Lemma.
\begin{lemma} \label{rewired_reduction}
  Let $0 \leq r \leq K$ and $M>2K$. For all $\vec{I}=\big(\{i_1,j_1\},\dots,\{i_r,j_r\}\big) \in  \, \mathcal{I}^{(2)}$ we have that
  $$ \sfC^{\mathsf{(rew)}}_{r,N,R,M}(\vec{I}\,) \subset \sfC^{\mathsf{(main)}}_{r,N,R,M} \, .$$
\end{lemma}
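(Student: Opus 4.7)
The plan is to observe that the only difference between $\sfC^{\mathsf{(rew)}}_{r,N,R,M}(\vec I\,)$ and $\sfC^{\mathsf{(main)}}_{r,N,R,M}$ lies in how the diffusivity constraint on $\bx_m-\by_{m-1}$ is imposed for $m\geq 2$ and for the two ``colliding'' coordinates $\ell'\in\{i_m,j_m\}$. For the other coordinates $\ell\neq i_m,j_m$ the constraint $|x_m^{(\ell)}-y_{m-1}^{(\ell)}|\leq R\sqrt{a_m-b_{m-1}}$ is the same in both sets. Hence the whole statement reduces to showing that, for each $m\geq 2$ and each $\ell'\in\{i_m,j_m\}$, the rewired bound together with the remaining (unchanged) constraints on replicas and on free jumps of the other walks imply the original bound
\[
  |x_m^{(\ell')}-y_{m-1}^{(\ell')}|\leq R\sqrt{a_m-b_{m-1}}.
\]

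The first step will be the triangle inequality
\[
  |x_m^{(\ell')}-y_{m-1}^{(\ell')}|\leq |x_m^{(\ell')}-y_{\sfp(m)}^{(\ell')}|+|y_{\sfp(m)}^{(\ell')}-y_{m-1}^{(\ell')}|,
\]
together with the equality $y_{\sfp(m)}^{(i_m)}=y_{\sfp(m)}^{(j_m)}$ coming from the fact that walks $i_m,j_m$ collided at $b_{\sfp(m)}$. I will bound the first term directly by the rewired constraint, and the second term by essentially repeating the computation leading to \eqref{spatial_final} in Lemma \ref{lemma_ker_repl}: unfold the jumps and replicas between the indices $\sfp(m)$ and $m-1$, use the remaining diffusivity constraints of $\sfC^{\mathsf{(main)}}_{r,N,R,M}$, and then apply Cauchy--Schwarz (with at most $2(m-1-\sfp(m))\leq 2K-1$ terms, using $r\leq K$) to obtain
\[
  |y_{\sfp(m)}^{(\ell')}-y_{m-1}^{(\ell')}|\leq R\sqrt{2K-1}\,\sqrt{b_{m-1}-b_{\sfp(m)}}.
\]

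Next, I will invoke the scale-separation inequality \eqref{long_sc_sep}, namely $a_m-b_{m-1}>(M-1)(b_{m-1}-b_{\sfp(m)})$, which holds on $\sfC^{\mathsf{(main)}}_{r,N,R,M}$, to convert the time bound to a bound involving $a_m-b_{m-1}$; this gives
\[
  |y_{\sfp(m)}^{(\ell')}-y_{m-1}^{(\ell')}|\leq R\sqrt{\tfrac{2K-1}{M-1}}\,\sqrt{a_m-b_{m-1}}.
\]
The same scale-separation inequality yields $a_m-b_{\sfp(m)}<\tfrac{M}{M-1}(a_m-b_{m-1})$, so that $\sqrt{1-1/M}\,\sqrt{a_m-b_{\sfp(m)}}\leq\sqrt{a_m-b_{m-1}}$. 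Applying this to the first term in the triangle inequality, the rewired constraint gives
\[
  |x_m^{(\ell')}-y_{\sfp(m)}^{(\ell')}|\leq R\bigl(1-\sqrt{\tfrac{2K-1}{M-1}}\bigr)\sqrt{a_m-b_{m-1}}.
\]

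Finally, summing these two contributions yields precisely $R\sqrt{a_m-b_{m-1}}$, which is the original diffusivity constraint on coordinate $\ell'$. Combined with the unchanged bounds on the other coordinates and on the replica jumps (which are common to both sets), this establishes $(\vec a,\vec b,\vec\bx,\vec\by)\in\sfC^{\mathsf{(main)}}_{r,N,R,M}$, proving the inclusion. I do not anticipate any real obstacle: the numerical factors in the definition of $\sfC^{\mathsf{(rew)}}_{r,N,R,M}(\vec I\,)$, namely $\sqrt{1-1/M}$ and $1-\sqrt{(2K-1)/(M-1)}$, are designed precisely so that the two contributions telescope exactly to $R\sqrt{a_m-b_{m-1}}$. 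The condition $M>2K$ is only there to ensure $(2K-1)/(M-1)<1$ so that the prefactor in the rewired constraint is positive.
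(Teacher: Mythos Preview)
Your proposal is correct and follows essentially the same approach as the paper's proof: reduce to the two colliding coordinates $\ell'\in\{i_m,j_m\}$, invoke the telescoping estimate \eqref{spatial_final} together with the scale-separation bound \eqref{long_sc_sep} to control $|y^{(\ell')}_{m-1}-y^{(\ell')}_{\sfp(m)}|$ and $a_m-b_{\sfp(m)}$ in terms of $a_m-b_{m-1}$, and then observe that the factors in $C_{K,M}$ are tailored so that the two pieces add up to exactly $R\sqrt{a_m-b_{m-1}}$. One small point worth making explicit (the paper glosses over it as well) is that the telescoping bound \eqref{spatial_final} uses the \emph{original} free-jump constraints $|x^{(\ell')}_{k+1}-y^{(\ell')}_k|\leq R\sqrt{a_{k+1}-b_k}$ for $\sfp(m)\leq k\leq m-2$, and when $\ell'\in\{i_{k+1},j_{k+1}\}$ these are the very constraints that have been replaced in $\sfC^{\mathsf{(rew)}}$; this is resolved by running the argument inductively in $m$, so that those constraints have already been re-established at earlier steps.
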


\begin{proof}
  Fix $0\leq r \leq K$, a sequence $\vec{I}=\big(\{i_1,j_1\}, \dots, \{i_r,j_r\}\big) \in \cI^{(2)}$ 
  and $(\vec{a},\vec{b},\vec{\bx},\vec{\by}) \in \sfC^{\mathsf{(rew)}}_{r,N,M,R}(\vec{I}\,)$.  Moreover, let $2\leq m \leq r$.
  By symmetry it suffices to prove that
  \begin{equation*}
    \begin{split}
      \ind_{\big\{|x^{(i_m)}_m- y^{(i_{m})}_{\sfp(m)}|\leq  R \sqrt{1-\frac{1}{M}}\Big(1-\sqrt{\frac{2r-1}{M-1}}\Big)\sqrt{a_m-b_{\sfp(m)}}\big \}} \leq  \ind_{\big\{|x^{(i_m)}_m-y^{(i_{m})}_{m-1}|\leq R \sqrt{a_m-b_{m-1}}\big\}}
      \,  .
    \end{split}
  \end{equation*}
  Indeed, by the definition of $\sfC^{\mathsf{(rew)}}_{r,N,M,R}(\vec{I}\,)$ and \eqref{spatial_final} we have that for $(\vec{a},\vec{b},\vec{\bx},\vec{\by}) \in \sfC^{\mathsf{(rew)}}_{r,N,M,R}(\vec{I}\, )$,
  \begin{equation} \label{rew_subset_main_1}
    \Big |\,|x^{(i_m)}_m- y^{(i_{m})}_{\sfp(m)} |-|x^{(i_m)}_m-y^{(i_{m})}_{m-1}|\, \Big|\leq R\cdot \sqrt{2r-1} \cdot \sqrt{\frac{a_m-b_{m-1}}{M-1}} \, .
  \end{equation}
  Moreover by \eqref{long_sc_sep} we have that $$a_m-b_{m-1}>(M-1)(b_{m-1}-b_{\sfp(m)}) \Rightarrow a_m-b_{\sfp(m)}>M(b_{m-1}-b_{\sfp(m)}) \, .$$
  Therefore,
  \begin{equation} \label{rew_subset_main_2}
    a_m-b_{m-1}=a_m-b_{\sfp(m)}-(b_{m-1}-b_{\sfp(m)})>a_m-b_{\sfp(m)}-\frac{1}{M}(a_m-b_{\sfp(m)})=\Big(1-\frac{1}{M}\Big)(a_m-b_{\sfp(m)}) \, .
  \end{equation}
  Combining inequalities \eqref{rew_subset_main_1} and \eqref{rew_subset_main_2} we get the result.

\end{proof}

\begin{proposition} \label{rewired_lower_bound}
  Let $0 \leq r \leq K$. For $M>2K$ we have that
  \begin{align*}
      &H^{\mathsf{(rew)}}_{r,N,R,M} \\
      &\quad \geq (1-e^{-cR^2})^{2Kh} \, \hspace{-0.1cm} \sum_{\vec{I}=\big(\{i_1,j_1\},\dots,\{i_r,j_r\}\big) \in  \, \mathcal{I}^{(2)}} \, \sumthree{0:=a_1\leq b_1<a_2\leq \dots < a_r\leq b_r \leq N ,}{a_{i+1}-b_i>M(b_i-b_{i-1}), \, 1\leq i \leq r-1,}{0:=x_1,y_1,\dots,x_r,y_r \in \Z^2} \prod_{k=1}^r U^{\beta_{i_k,j_k}}_N\big(b_k-a_k,y_k-x_k\big) \, \\
    &\quad   \times \, \prod_{m=2}^r \sigma^{i_m,j_m}_N \cdot q^{2}_{a_{m}-b_{\sfp(m)}}\big(x_{m}-y_{\sfp(m)}\big) \cdot \ind_{\big\{|y_k-x_k|\leq R\sqrt{b_k-a_k}, \, |x_m-y_{\sfp(m)}|\leq R\, C_{K,M}\sqrt{a_m-b_{\sfp(m)}}\big\}},
  \end{align*}
  with $C_{K,M}:=\sqrt{1-\frac{1}{M}}\bigg(1-\sqrt{\frac{2K-1}{M-1}}\bigg)$.
\end{proposition}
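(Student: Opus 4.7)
My plan is to start from the expansion \eqref{rewired_expansion} of $H^{\mathsf{(rew)}}_{r,N,R,M}$, restrict the domain of summation using Lemma \ref{rewired_reduction}, and then integrate out the spatial coordinates of the walks not involved in each collision. The first step is to replace the set $\sfC^{\mathsf{(main)}}_{r,N,R,M}$ appearing in \eqref{rewired_expansion} by the smaller set $\sfC^{\mathsf{(rew)}}_{r,N,R,M}(\vec{I}\,)$, which is legitimate since Lemma \ref{rewired_reduction} shows the latter is a subset of the former and all summands are non-negative. On the restricted set, the diffusive constraint at each transition time $a_m$ for the colliding pair $\{i_m,j_m\}$ is stated directly in terms of its most recent collision site $y^{(i_m)}_{\sfp(m)}$, which produces exactly the indicator $\ind_{\{|x_m - y_{\sfp(m)}|\leq R\, C_{K,M}\sqrt{a_m - b_{\sfp(m)}}\}}$ that appears on the right-hand side.

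Next, I would sum out the free spatial coordinates. For each $m \in \{2,\dots,r\}$ and each $\ell \notin \{i_m,j_m\}$, summing over $x^{(\ell)}_m \in \Z^2$ the free kernel $q_{a_m - b_{m-1}}(x^{(\ell)}_m - y^{(\ell)}_{m-1})$ under the diffusive constraint $|x^{(\ell)}_m - y^{(\ell)}_{m-1}| \leq R\sqrt{a_m - b_{m-1}}$ imposed by $\sfC^{\mathsf{(rew)}}$ gives a contribution of at least $1 - e^{-cR^2}$, by the moderate deviation estimate \eqref{moderate_devation_estimate}. Analogously, for each $k \in \{1,\dots,r\}$ and each $\ell \notin \{i_k,j_k\}$, summing over $y^{(\ell)}_k \in \Z^2$ the kernel $q_{b_k - a_k}(y^{(\ell)}_k - x^{(\ell)}_k)$ under the corresponding diffusive constraint also contributes at least $1 - e^{-cR^2}$. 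The key point enabling this decoupling is that after the rewiring procedure none of these free coordinates appears in any other kernel, so each summation can be carried out independently.

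Counting gives $(h-2)$ summations at each of the $r-1$ transitions and $(h-2)$ at each of the $r$ replicas, for a total of $(2r-1)(h-2) \leq 2Kh$ factors, producing the stated prefactor $(1-e^{-cR^2})^{2Kh}$. What remains are the sums over the common positions $x_k := x^{(i_k)}_k = x^{(j_k)}_k$ and $y_k := y^{(i_k)}_k = y^{(j_k)}_k$ of each colliding pair, together with the replica weights $U^{\beta_{i_k,j_k}}_N(b_k - a_k, y_k - x_k)$, the transition weights $\sigma_N^{i_m,j_m} \cdot q^2_{a_m - b_{\sfp(m)}}(x_m - y_{\sfp(m)})$, and the surviving diffusive indicators; this matches exactly the right-hand side of the proposition. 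No real obstacle arises — the argument is essentially bookkeeping, with Lemma \ref{rewired_reduction} handling the most delicate point, namely the tracking of the constant $C_{K,M}$ through the reformulation of the diffusive constraint on the colliding pair.
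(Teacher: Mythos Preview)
Your proposal is correct and follows essentially the same approach as the paper: restrict to $\sfC^{\mathsf{(rew)}}_{r,N,R,M}(\vec I\,)$ via Lemma~\ref{rewired_reduction}, then sum out the free spatial coordinates sequentially from the end using the deviation estimate \eqref{moderate_devation_estimate}, collecting the factor $(1-e^{-cR^2})^{2Kh}$. One small point of phrasing: a free coordinate such as $x^{(\ell)}_m$ does appear in two free kernels (the incoming one from $y^{(\ell)}_{m-1}$ and the outgoing one to $y^{(\ell)}_m$), so the summations are not literally independent; what makes the argument work is that summing from the end frees each coordinate in turn, exactly as the paper does.
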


\begin{proof}
  Recall from \eqref{rewired_expansion} that
  \begin{equation*}
    \begin{split}
      H^{\mathsf{(rew)}}_{r,N,R,M}&=\hspace{-0.2cm} \sum_{\vec{I}=\big(\{i_1,j_1\},\dots,\{i_r,j_r\}\big) \in  \, \mathcal{I}^{(2)}} \,\,\,  \sum_{(\vec{a},\vec{b},\vec{\bx},\vec{\by})\, \in \,\sfC^{\mathsf{(main)}}_{r,N,R,M}} \\
      &\qquad \qquad \times  \prod_{k=1}^r U^{\beta_{i_k,j_k}}_N\big(b_k-a_k,y^{(i_k)}_k-x^{(i_k)}_k\big)\cdot \prod_{\ell \neq i_k,j_k}q_{b_k-a_k}(y^{(\ell)}_k-x^{(\ell)}_k) \\
      & \qquad \qquad \quad \times \prod_{m=2}^r \sigma^{i_m,j_m}_N \cdot q^{2}_{a_{m}-b_{\sfp(m)}}\big(x^{(i_m)}_{m}-
      y^{(i_{m})}_{\sfp(m)} \big) \cdot \prod_{\ell \neq i_m,j_m}q_{a_m-b_{m-1}}\big(x^{(\ell)}_m-y^{(\ell)}_{m-1}\big) \, .
    \end{split}
  \end{equation*}
  By Lemma \ref{rewired_reduction} we have that
  \begin{equation} \label{temp_lower_bound}
    \begin{split}
      H^{\mathsf{(rew)}}_{r,N,R,M}&\geq \hspace{-0.2cm} \sum_{\vec{I}=\big(\{i_1,j_1\},\dots,\{i_r,j_r\}\big) \in  \, \mathcal{I}^{(2)}} \,\,\,  \sum_{(\vec{a},\vec{b},\vec{\bx},\vec{\by})\, \in \,\sfC^{\mathsf{(rew)}}_{r,N,R,M}(\vec{I}\,)} \\
      &\qquad \qquad \times  \prod_{k=1}^r U^{\beta_{i_k,j_k}}_N\big(b_k-a_k,y^{(i_k)}_k-x^{(i_k)}_k\big) \cdot\prod_{\ell \neq i_k,j_k}q_{b_k-a_k}(y^{(\ell)}_k-x^{(\ell)}_k) \\
      & \qquad \qquad \quad \times \prod_{m=2}^r \sigma^{i_m,j_m}_N \cdot q^{2}_{a_{m}-b_{\sfp(m)}}\big(x^{(i_m)}_{m}-
      y^{(i_{m})}_{\sfp(m)} \big) \cdot \prod_{\ell \neq i_m,j_m}q_{a_m-b_{m-1}}\big(x^{(\ell)}_m-y^{(\ell)}_{m-1}\big) \, .
    \end{split}
  \end{equation}
  The first step in getting a lower bound for $H^{\mathsf{(rew)}}_{r,N,R,M}$ is to get rid of the dashed lines, see Figure \ref{fig:rewiring}.
  We follow the steps we took in the proof of Proposition \ref{rewired_upper_bound} for the upper bound. In particular, we start the summation of \eqref{temp_lower_bound}  beginning from the end. Using that for $n \in \N$ and $R\in (0,\infty)$
  \begin{equation} \label{lower_deviation_bound}
    \sum_{z \in \Z^2: \, |z|\leq R \sqrt{n}} q_n(z)=1 - \sum_{z \in \Z^2: \, |z|> R \sqrt{n}} q_n(z) \geq 1-e^{-cR^2} \, ,
  \end{equation}
  by \eqref{moderate_devation_estimate}, we get that
  \begin{equation*}
    \sumtwo{y_r^{(\ell)}\in \Z^2: |y_r^{(\ell)}-x^{(\ell)}_r|\leq R\sqrt{b_r-a_r}, \,}{1\leq \ell \leq h,\,\ell \neq i_r,j_r}\prod_{\ell \neq i_r,j_r} q_{b_r-a_r}(y^{(\ell)}_r-x^{(\ell)}_r) \geq  (1-e^{-cR^2})^h\, .
  \end{equation*}
  We leave the sum
  $$\sumtwo{b_r \in [a_r,N],}{y^{(i_r)}_r \in \Z^2:\, |y^{(i_r)}_r-x_r^{(i_r)}|\leq R \sqrt{b_r-a_r} } U_N\big (b_r-a_r,y_r^{(i_r)}-x_r^{(i_r)}\big )$$
  as is and move on to the time interval $[b_{r-1},a_r]$. We use \eqref{lower_deviation_bound} to deduce that
  \begin{equation*}
    \sumtwo{x_r^{(\ell)}\in \Z^2: |x_r^{(\ell)}-y^{(\ell)}_{r-1}|\leq R\sqrt{a_r-b_{r-1}}, \,}{ 1\leq \ell \leq h,\,\ell \neq i_r,j_r} \,\prod_{\ell \neq i_r,j_r}\,q_{a_r-b_{r-1}}(x^{(\ell)}_r-y^{(\ell)}_{r-1}) \geq (1-e^{-cR^2})^{h} \, .
  \end{equation*}
  Again, we leave the sum $\sum_{a_r \in \, (b_{r-1},b_r], \,x_r^{(i_r)} \in \Z^2}\, q_{a_r-b_{\sfp(r)}}^2\big(x^{(i_r)}_r-y^{(i_{r})}_{\sfp(r)}\big)$ intact. We can continue this procedure since due to rewiring all the spatial variables $y^{(\ell)}_{r-1}$, $\ell \neq i_{r-1},j_{r-1}$ are free, 
  i.e. there are no outgoing laces starting off $y^{(\ell)}_{r-1}$, $\ell \neq i_{r-1},j_{r-1}$ at time $b_{r-1}$, and there are no diffusivity constraints linking
  $x^{(i_r)}_r=x_r^{(j_r)}$ with $y^{(i_r)}_{r-1},y^{(j_r)}_{r-1}$ by definition of $\sfC^{\mathsf{(rew)}}_{r,N,R,M}(\vec{I})$. Iterating this procedure we obtain that
  \begin{equation*}
    \begin{split}
      &H^{\mathsf{(rew)}}_{r,N,R,M}\\
      & \geq (1-e^{-cR^2})^{2Kh}\, \hspace{-0.1cm} \sum_{\vec{I}=\big(\{i_1,j_1\},\dots,\{i_r,j_r\}\big) \in  \, \mathcal{I}^{(2)}} \, \sumthree{0:=a_1\leq b_1<a_2\leq \dots < a_r\leq b_r \leq N ,}{a_{i+1}-b_i>M(b_i-b_{i-1}), \, 1\leq i \leq r-1,}{0:=x_1,y_1,\dots,x_r,y_r \in \Z^2} \prod_{k=1}^r U^{\beta_{i_k,j_k}}_N\big(b_k-a_k,y_k-x_k\big) \, \\
      &\qquad \times \, \prod_{m=2}^r \sigma^{i_m,j_m}_N \cdot q^{2}_{a_{m}-b_{\sfp(m)}}\big(x_{m}-y_{\sfp(m)}\big) \cdot \ind_{\big\{|y_k-x_k|\leq R\sqrt{b_k-a_k}, \, |x_m-y_{\sfp(m)}|\leq R\, C_{K,M}\sqrt{a_m-b_{\sfp(m)}}\big\}},
    \end{split}
  \end{equation*}
  with $C_{K,M}=\sqrt{1-\frac{1}{M}}\bigg(1-\sqrt{\frac{2K-1}{M-1}}\bigg)$.
\end{proof}

\begin{figure}
\begin{tikzpicture}[scale=0.6]
\draw  [thick] (-8, -1)  circle [radius=0.1];
\draw  [fill] (0, 0)  circle [radius=0.1]; \draw  [fill] (2, 0)  circle [radius=0.1];
\draw  [fill] (-5,0)  circle [radius=0.1]; \draw  [fill] (-3,0)  circle [radius=0.1];
\draw  [fill] (6,1)  circle [radius=0.1];  \draw  [fill] (8,1)  circle [radius=0.1];
\node at (0.2,-0.5) {\scalebox{0.7}{$a_3$}};  \node at (2.2,-0.5) {\scalebox{0.7}{$b_3$}};
\node at (-4.8,-0.5) {\scalebox{0.7}{$a_1$}};  \node at (-2.8,-0.5) {\scalebox{0.7}{$b_1$}};
\node at (6.2,0.5) {\scalebox{0.6}{$a_5$}};  \node at (8.2,0.5) {\scalebox{0.7}{$b_5$}};
\draw [-,thick, decorate, decoration={snake,amplitude=.4mm,segment length=2mm}] (0,0) -- (2,0);
\draw [-,thick, decorate, decoration={snake,amplitude=.4mm,segment length=2mm}] (-5,0) -- (-3,0);
\draw [-,thick, decorate, decoration={snake,amplitude=.4mm,segment length=2mm}] (6,1) -- (8,1);
\draw[thick] (-3,0) to [out=50,in=130] (0,0);   \draw[thick] (-3,0) to [out=-50,in=-130] (0,0);
\draw[thick] (-8,-1) to [out=30,in=170] (-5,0); \draw[thick] (-8,-1) to [out=10,in=-150] (-5,0);
\draw[thick] (-8,-1) to [out=60,in=150] (6,1); \draw[thick] (-8,-1) to [out=50,in=160] (6,1);
\draw  [fill] (3, -1.5)  circle [radius=0.1]; \draw  [fill] (5, -1.5)  circle [radius=0.1];
\draw  [fill] (-2.5,-1.5)  circle [radius=0.1]; \draw  [fill] (-0.5,-1.5)  circle [radius=0.1];
\draw  [fill] (9,-1.5)  circle [radius=0.1];  \draw  [fill] (11,-1.5)  circle [radius=0.1];
\node at (3.2,-2) {\scalebox{0.7}{$a_4$}};  \node at (5.2,-2) {\scalebox{0.7}{$b_4$}};
\node at (-2.3,-2) {\scalebox{0.7}{$a_2$}};  \node at (-0.4,-2) {\scalebox{0.7}{$b_2$}};
\node at (9.2,-2) {\scalebox{0.6}{$a_6$}};  \node at (11.2,-2) {\scalebox{0.7}{$b_6$}};
\draw [-,thick, decorate, decoration={snake,amplitude=.4mm,segment length=2mm}] (3,-1.5) -- (5,-1.5);
\draw [-,thick, decorate, decoration={snake,amplitude=.4mm,segment length=2mm}] (-2.5,-1.5) -- (-0.5,-1.5);
\draw [-,thick, decorate, decoration={snake,amplitude=.4mm,segment length=2mm}] (9,-1.5) -- (11,-1.5);
\draw[thick] (-8,-1) to [out=-30,in=-150] (-2.5,-1.5); 
\draw[thick] (-0.5,-1.5) to [out=-50,in=-130] (3,-1.5); \draw[thick] (-0.5,-1.5) to [out=50,in=130] (3,-1.5); 
\draw[thick] (5,-1.5) to [out=-20,in=-160] (9,-1.5); \draw[thick] (5,-1.5) to [out=20,in=160] (9,-1.5); 
\draw[thick] (-8,-1) to [out=0,in=160] (-2.5,-1.5);
\end{tikzpicture}
\caption{Graphical representation of a term of the chaos representation of  
$\prod_{1\leq i<j \leq h}\E\Big[e^{\frac{\pi\beta_{i,j}}{\log N}\, \sfL_N^{(i,j)}}\Big]$ for $h=3$. 
This diagram captures the main contribution, which is when all $a$'s and $b$'s are distinct. Configurations where more than one pair of walks collide 
at a same time $n\leq N$ have lower order contributions.}
\label{fig:graphical-collisions}
\end{figure}
To proceed with the last steps of our proof it is useful to record two chaos expansions of 
$\prod_{1\leq i<j \leq h} \E\Big[e^{\frac{\pi\beta_{i,j}}{\log N}\, \sfL_N^{(i,j)}}\Big]$. These read as
\begin{align}
\prod_{1\leq i<j \leq h}\E\Big[e^{\frac{\pi\beta_{i,j}}{\log N}\, \sfL_N^{(i,j)}}\Big]
&= \sum_{\big(\{i_1,j_1\},\dots,\{i_r,j_r\}\big) \in  \, \mathcal{I}^{(2)}} \, 
 \sumtwo{0:=a_1\leq b_1\leq a_2\leq \dots \leq a_r\leq b_r \leq N\, ,}{0:=x_1,y_1,\dots,x_r,y_r \in \Z^2}  
\prod_{k=1}^r U^{\beta_{i_k,j_k}}_N\big(b_k-a_k,y_k-x_k\big) \notag \\
      & \hskip 4cm \times \prod_{m=2}^r \sigma^{i_m,j_m}_N \cdot q^{2}_{a_{m}-b_{\sfp(m)}}\big(x_{m}-y_{\sfp(m)}\big)
      \label{graphical-collisions1} \\
&= \sum_{\big(\{i_1,j_1\},\dots,\{i_r,j_r\}\big) \in  \, \mathcal{I}^{(2)}}
    \,\,\,  \sum_{0:=a_1\leq b_1\leq a_2\leq \dots \leq a_r\leq b_r \leq N}  \prod_{k=1}^r U^{\beta_{i_k,j_k}}_N\big(b_k-a_k\big) \notag\\
    & \hskip 4cm \times     \prod_{m=2}^r \sigma^{i_m,j_m}_N  q_{2(a_{m}-b_{\sfp(m)})}(0) .
    \label{graphical-collisions2}
\end{align}
This follows from \eqref{eq:2body} and \eqref{Un} and by grouping together intervals $[a,b]$ where one observes collisions of
only a single pair of random walks. Firgure \ref{fig:graphical-collisions} presents a graphical explanation / representation of these
chaos expansions.
\begin{proposition}\label{rewired_limit}
  We have that
  \begin{equation*}
    \lim_{K \to \infty} \lim_{M \to \infty} \lim_{R \to \infty} \lim_{N \to \infty} \sum_{r=0}^{K} H^{\mathsf{(rew)}}_{r,N,R,M}=\prod_{1\leq i<j \leq h} \frac{1}{1-\beta_{i,j}} \, .
  \end{equation*}
\end{proposition}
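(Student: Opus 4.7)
My plan is to combine the upper bound of Proposition \ref{rewired_upper_bound} with the matching lower bound of Proposition \ref{rewired_lower_bound} and show that both converge, in the iterated limit $\lim_K\lim_M\lim_R\lim_N$, to $\prod_{i<j}(1-\beta_{i,j})^{-1}$. The lower bound carries an additional prefactor $(1-e^{-cR^2})^{4Kh}$ produced by the diffusive indicators, which tends to $1$ as $R\to\infty$ (by the moderate deviation estimate \eqref{moderate_devation_estimate}), so it suffices to analyse the upper bound and then confirm the same limit for the lower one.

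The first step is spatial summation. Rewiring has decoupled each pair's spatial chain from every other pair's, so starting from the rightmost $\sfU$-factor and peeling inward using $\sum_{x\in\Z^2}q_n^2(x)=q_{2n}(0)$ and $\sum_{y\in\Z^2}U_N^{\beta}(n,y-x)=U_N^{\beta}(n)$, the spatial structure collapses into a purely temporal expression
\begin{equation*}
H^{\mathsf{(rew)}}_{r,N,R,M}\ \lesssim\ \sum_{\vec I\in\mathcal{I}^{(2)}}\ \sum_{0=a_1\leq b_1<\dots\leq b_r\leq N}\ \prod_{k=1}^r U_N^{\beta_{i_k,j_k}}(b_k-a_k)\ \prod_{m=2}^r \sigma_N^{i_m,j_m}\, q_{2(a_m-b_{\sfp(m)})}(0).
\end{equation*}
Next I would group the surviving factors by pair. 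For a pair $\{i,j\}$ with replicas at positions $k_1<\dots<k_{r_{i,j}}$ in $\vec I$, all the $U_N^{\beta_{i,j}}$-factors and the $\sigma_N^{i,j}\,q_{2(\cdot)}(0)$-gap-factors belong solely to that pair. Using the renewal representation \eqref{ren_rep}, $U_N^\beta(n)=\sum_k(\sigma_N R_N)^k\P(\tau_k^{(N)}=n)$, together with the identity $q_{2n}(0)=R_N\,\P(T_1^{(N)}=n)\ind_{n\leq N}$ and the limit $\sigma_N R_N\to\beta$ from \eqref{sigmaR}, each pair's chain becomes algebraically a single extended renewal process: the convolution $U_N^\beta(n_1)\cdot\sigma_N q_{2n_2}(0)\cdot U_N^\beta(n_3)$, summed in $n_2$, reconstructs $U_N^\beta$ over the longer interval plus exactly one extra renewal step.

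The key mechanism behind the theorem's asymptotic \emph{independence} is the pair-decoupling supplied by scale separation $a_{m+1}-b_m>M(b_m-b_{m-1})$. Because consecutive replicas of different pairs live at widely separated scales, the time variables of any one pair's chain become asymptotically independent of every other pair's, up to errors of order $1/M$. After this decoupling, the sum over interleaved $\vec I$ and over time variables factorizes into an independent product over pairs, each factor of which converges, via $\sum_n U_N^{\beta_{i,j}}(n)\to (1-\beta_{i,j})^{-1}$ (Theorem \ref{ET60_theorem}), to the single-pair Laplace transform limit. Taking limits in the prescribed order---$N\to\infty$ to activate the renewal/Erd\"os--Taylor asymptotics, $R\to\infty$ to absorb the diffusive cutoffs, $M\to\infty$ to complete the decoupling, and $K\to\infty$ via the uniform exponential bound of Proposition \ref{fixed_deg_bounds}---yields the desired product $\prod_{i<j}(1-\beta_{i,j})^{-1}$.

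The main obstacle is the combinatorial bookkeeping in the third paragraph: one must verify that summing over interleavings $\vec I\in\mathcal{I}^{(2)}$ with prescribed multi-index $(r_{i,j})_{i<j}$ of pair-occurrences, together with the per-pair renewal chains, reconstitutes \emph{exactly} the geometric-series expansion of $\prod(1-\beta_{i,j})^{-1}$---in particular, the boundary contribution $\sfp(k_1)=0$ at the first appearance of each pair (which produces the initial $\sigma_N^{i,j}q^2_{a_{k_1}}(0)$ factor) must combine correctly with the absence of such a factor on the pair that starts the sequence, so that every pair contributes precisely one copy of its single-pair Laplace transform limit. Once this combinatorial identity is established through the renewal framework of \cite{CSZ19a}, matching the lower bound and concluding the iterated limit is routine.
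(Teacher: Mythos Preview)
Your overall strategy---sandwich via Propositions~\ref{rewired_upper_bound} and~\ref{rewired_lower_bound}, collapse the spatial sums, then identify the result with the product of single-pair Laplace transforms---matches the paper. But you misidentify the role of the $M$-constraint in this proposition, and this leaves a real gap in your treatment of the lower bound.

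The point you are missing is that after rewiring, the temporal expression \emph{without} any $M$- or diffusivity constraint is \emph{exactly} (not asymptotically) the two-body-collision part of the chaos expansion of $\prod_{i<j}\E\big[e^{\beta_N^{i,j}\sfL_N^{(i,j)}}\big]$, truncated at $r\leq K$ replicas: the kernel $q^2_{a_m-b_{\sfp(m)}}(\cdot)$ links each replica to the previous replica of the \emph{same} pair, so summing over all interleavings $\vec I\in\mathcal{I}^{(2)}$ simply reconstructs the product of the individual pair expansions. This is the algebraic identity \eqref{laplace_parts} in the paper, and it requires no scale separation---the $M$-constraint was used earlier (Lemma~\ref{lemma_ker_repl}) to justify rewiring, not to decouple pairs afterwards. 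Hence for the upper bound one just enlarges the summation domain by dropping the constraints and is done. For the lower bound, the $M$- and diffusivity constraints are \emph{obstacles} that must be removed by adding back the forbidden region: the $M$-violation costs $O\big(\sqrt{\log(1+M)/\log N}\big)$ by the argument of Proposition~\ref{M_proposition} and vanishes as $N\to\infty$ for \emph{fixed} $M$ (so ``$M\to\infty$ to complete the decoupling'' is not what happens); the diffusivity constraints contribute extra $(1-e^{-\kappa R})^K(1-e^{-\kappa R^2 C_{K,M}^2})^K$ prefactors beyond the $(1-e^{-cR^2})^{2Kh}$ you mention. Finally one must control the two discrepancies between the full product expansion and the truncated two-body sum: the multiple-collision term $A_N^{(1)}=O((\log N)^{-2})$ and the tail $A_{N,K}^{(2)}$ in $r>K$. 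The latter is handled by pigeonhole (some pair must have at least $K/\binom{h}{2}$ replicas, and its renewal chain then contributes $(\bar\beta')^{K/\binom{h}{2}}$), not by Proposition~\ref{fixed_deg_bounds}, which bounds $H_{r,N}$ rather than the rewired quantity.
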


\begin{proof}We are going to prove this Proposition via means of the lower and upper bounds established in Propositions \ref{rewired_upper_bound} and \ref{rewired_lower_bound}.
  By Proposition \ref{rewired_upper_bound} we have that
  \begin{equation} \label{rewired_upper_bound_2}
    \begin{split}
      H^{\mathsf{(rew)}}_{r,N,R,M} \leq \sum_{\big(\{i_1,j_1\},\dots,\{i_r,j_r\}\big) \in  \, \mathcal{I}^{(2)}} \, \sumtwo{0:=a_1\leq b_1<a_2\leq \dots < a_r\leq b_r \leq N\, ,}{0:=x_1,y_1,\dots,x_r,y_r \in \Z^2} \prod_{k=1}^r U^{\beta_{i_k,j_k}}_N\big(b_k-a_k,y_k-x_k\big) \, \\
      \times \, \prod_{m=2}^r \sigma^{i_m,j_m}_N \cdot q^{2}_{a_{m}-b_{\sfp(m)}}\big(x_{m}-y_{\sfp(m)}\big) \, .
    \end{split}
  \end{equation}
  Summing the spatial points on the right hand side of \eqref{rewired_upper_bound_2} we obtain that
  \begin{align} \label{rewired_upper_bound_3}
     & H^{\mathsf{(rew)}}_{r,N,R,M}                                                                 \\
     & \leq \hspace{-0.3cm}\sum_{\big(\{i_1,j_1\},\dots,\{i_r,j_r\}\big) \in  \, \mathcal{I}^{(2)}}
    \,\,\, \sum_{0:=a_1\leq b_1<a_2\leq \dots < a_r\leq b_r \leq N} \prod_{k=1}^r U^{\beta_{i_k,j_k}}_N\big(b_k-a_k\big)
    \prod_{m=2}^r \sigma^{i_m,j_m}_N  q_{2(a_{m}-b_{\sfp(m)})}(0) \, . \notag
  \end{align}
 Using \eqref{graphical-collisions1} one can deduce that
  \begin{equation} \label{rewired_final_upper_bound}
    \ \sum_{r \geq 0}  H^{\mathsf{(rew)}}_{r,N,R,M} \leq \prod_{1\leq i<j \leq h} \E\Big[e^{\frac{\pi\beta_{i,j}}{\log N}\, \sfL_N^{(i,j)}}\Big]= \big(1+o_N(1)\big)\prod_{1\leq i<j \leq h} \frac{1}{1-\beta_{i,j}} \, .
  \end{equation}
  Next, by Proposition \ref{rewired_lower_bound} we have that
  \begin{equation} \label{rewired_lower_bound_2}
    \begin{split}
      &H^{\mathsf{(rew)}}_{r,N,R,M} \\
      &\geq (1-e^{-cR^2})^{2Kh}\, \hspace{-0.1cm} \sum_{\vec{I}=\big(\{i_1,j_1\},\dots,\{i_r,j_r\}\big) \in  \, \mathcal{I}^{(2)}} \, \sumthree{0:=a_1\leq b_1<a_2\leq \dots < a_r\leq b_r \leq N ,}{a_{i+1}-b_i>M(b_i-b_{i-1}), \, 1\leq i \leq r-1,}{0:=x_1,y_1,\dots,x_r,y_r \in \Z^2} \prod_{k=1}^r U^{\beta_{i_k,j_k}}_N\big(b_k-a_k,y_k-x_k\big) \, \\
      &\quad \times \, \prod_{m=2}^r \sigma^{i_m,j_m}_N \cdot q^{2}_{a_{m}-b_{\sfp(m)}}\big(x_{m}-y_{\sfp(m)}\big) \cdot \ind_{\big\{|y_k-x_k|\leq R\sqrt{b_k-a_k}, \, |x_m-y_{\sfp(m)}|\leq R\, C_{K,M}\sqrt{a_m-b_{\sfp(m)}}\big\}},
    \end{split}
  \end{equation}
  Lifting the diffusivity conditions imposed on the right-hand side of \eqref{rewired_lower_bound_2} can be done using arguments already present in Lemma \ref{sdif_lemma}. More specifically,
  we use that for $0 \leq m \leq N$, $w \in \Z^2$ and $1\leq i<j \leq h$,
  \begin{equation*}
    \begin{split}
      \sumtwo{n \in [m,N] ,\,}{ z \in \Z^2: \, |z-w|\leq R\sqrt{n-m}} \hspace{-0.3cm} U^{\beta_{i,j}}_N\big(n-m,z-w\big)& = \sum_{n \in [m,N]} U^{\beta_{i,j}}_N\big(n-m\big) -\hspace{-0.2cm}\sumtwo{n \in [m,N],\,}{ z \in \Z^2:\, |z-w|> R\sqrt{n-m}}  \hspace{-0.3cm}U^{\beta_{i,j}}_N\big(n-m,z-w\big) \\
      & \geq \sum_{n \in [m,N]} U^{\beta_{i,j}}_N\big(n-m\big) -e^{-\kappa R}\sum_{n \in [m,N]} U^{\beta_{i,j}}_N\big(n-m\big) \\
      & \geq \big(1-e^{-\kappa R}\big) \,\sum_{n \in [m,N]} U^{\beta_{i,j}}_N\big(n-m\big) \,  ,
    \end{split}
  \end{equation*}
  where in the first inequality we used \eqref{U_sdif_2} from Lemma \ref{sdif_lemma} with a suitable constant $\kappa(\bar \beta) \in (0,\infty)$.
  Similarly we have that
  \begin{equation*}
    \begin{split}
      \sumtwo{n \in [m,N], \,}{ z \in \Z^2: |z-w|\leq R\,C_{K,M}\,\sqrt{n-m} }\hspace{-0.3cm} q^2_{n-m}(z-w) =&  \sum_{n \in [m,N]} q_{2(n-m)}(0) - \sumtwo{n \in [m,N], \,}{ z \in \Z^2: |z-w|>R\, C_{K,M}\, \sqrt{n-m} } q^2_{n-m}(z-w)\\
      \geq &\big(1-e^{-\kappa\,R^2\, C^2_{K,M}}\big)\sum_{n \in [m,N]} q_{2(n-m)}(0)
    \end{split}
  \end{equation*}
  by tuning the constant $\kappa$ if needed.
  Therefore, we finally obtain that
  \begin{equation} \label{rewired_only_M}
    \begin{split}
      &H^{\mathsf{(rew)}}_{r,N,R,M} \geq (1-e^{-cR^2})^{2Kh}\,(1-e^{-\kappa\,R})^K \, (1-e^{-\kappa\,R^2\,C^2_{K,M}})^K \times \\
      &\,  \sum_{\big(\{i_1,j_1\},\dots,\{i_r,j_r\}\big) \in  \, \mathcal{I}^{(2)}}\,\,\,\sumtwo{0:=a_1\leq b_1<a_2\leq \dots < a_r\leq b_r \leq N ,}{a_{i+1}-b_i>M(b_i-b_{i-1}), \, 1\leq i \leq r-1\, .} \prod_{k=1}^r U^{\beta_{i_k,j_k}}_N\big(b_k-a_k\big) \prod_{m=2}^r \sigma^{i_m,j_m}_N \cdot q_{2(a_{m}-b_{\sfp(m)})}(0) \, .
    \end{split}
  \end{equation}

  The last restriction we need to lift is the restriction $a_{i+1}-b_i>M(b_i-b_{i-1}), \, 1\leq i \leq r-1$. This can be done via the arguments used in Proposition \ref{M_proposition}, so we do not repeat it here, but only note that there exists a constant $\widetilde{C}_{K}=\widetilde{C}_K(\bar\beta,h) \in (0,\infty)$ such that
  for all $0 \leq r \leq K$, the corresponding sum to the right hand side of \eqref{rewired_only_M}, but with its temporal range of summation be such that there exists $1\leq i\leq r-1 :\, a_{i+1}-b_i\leq M(b_i-b_{i-1})$, satisfies the bound
  \begin{equation*}
    \begin{split}
      &\sum_{\big(\{i_1,j_1\},\dots,\{i_r,j_r\}\big) \in  \, \mathcal{I}^{(2)}}\,\,\, \sumtwo{0:=a_1\leq b_1<a_2\leq \dots < a_r\leq b_r \leq N ,}{\exists\, 1\leq i\leq r-1 :\, a_{i+1}-b_i\leq M(b_i-b_{i-1})} \prod_{k=1}^r U^{\beta_{i_k,j_k}}_N\big(b_k-a_k\big) \prod_{m=2}^r \sigma^{i_m,j_m}_N \cdot q_{2(a_{m}-b_{\sfp(m)})}(0) \\
      & \leq \widetilde{C}_K \cdot \epsilon_{N,M} \, ,
    \end{split}
  \end{equation*}
  where $\epsilon_{N,M}$ is such that $\lim_{N \to \infty} \epsilon_{N,M}=0$ for any fixed $M \in (0,\infty)$. Therefore, 
  the resulting lower bound on $H^{\mathsf{(rew)}}_{r,N,R,M}$ will be 
  \begin{equation} \label{rewired_norestrictions}
    \begin{split}
      H^{\mathsf{(rew)}}_{r,N,R,M} &\geq (1-e^{-cR^2})^{2Kh}\,(1-e^{-\kappa\,R})^K \, (1-e^{-\kappa\,R^2\,C^2_{K,M}})^K  \\
      &\times\Bigg(  \sum_{\big(\{i_1,j_1\},\dots,\{i_r,j_r\}\big) \in  \, \mathcal{I}^{(2)}}\,\,\,\sum_{0:=a_1\leq b_1<a_2\leq \dots < a_r\leq b_r \leq N} \prod_{k=1}^r U^{\beta_{i_k,j_k}}_N\big(b_k-a_k\big)\,  \\
      &\,\qquad \qquad \times  \prod_{m=2}^r \sigma^{i_m,j_m}_N \cdot q_{2(a_{m}-b_{\sfp(m)})}(0)   - \widetilde{C}_K \cdot \epsilon_{N,M} \Bigg) \, .
    \end{split}
  \end{equation}
  Note that
  \begin{equation} \label{laplace_parts}
    \begin{split}
      \prod_{1\leq i<j \leq h}\E\Big[e^{\frac{\pi \beta_{i,j}}{\log N}\,  \sfL^{(i,j)}_N}\Big]
      &= \sum_{r=0}^{K} \sum_{\big(\{i_1,j_1\},\dots,\{i_r,j_r\}\big) \in  \, \mathcal{I}^{(2)}}
      \,\, \sum_{0:=a_1\leq b_1<a_2\leq \dots < a_r\leq b_r \leq N} \prod_{k=1}^r U^{\beta_{i_k,j_k}}_N\big(b_k-a_k\big)
      \\
      & \qquad \times \prod_{m=2}^r \sigma^{i_m,j_m}_N \cdot q_{2(a_{m}-b_{\sfp(m)})}(0) + A^{(1)}_{N}+ A_{N,K}^{(2)} \, ,
    \end{split}
  \end{equation}
  where $A_N^{(1)}$ denotes the part of the chaos expansion of $\prod_{1\leq i<j \leq h}\E\Big[e^{\frac{\pi \beta_{i,j}}{\log N}\,  \sfL^{(i,j)}_N}\Big]$
  where there exists a time $n\leq N$ at which multiple pairs collide.
    Moreover,  $A^{(2)}_{N,K}$ 
  denotes the corresponding sum on the right hand side of \eqref{laplace_parts} but from $r=K+1$ to $\infty$, that is
  \begin{equation*}
    \begin{split}
      A^{(2)}_{N,K}= \sum_{r>K} \sum_{\big(\{i_1,j_1\},\dots,\{i_r,j_r\}\big) \in  \, \mathcal{I}^{(2)}}\,\,\sum_{0:=a_1\leq b_1<a_2\leq \dots < a_r\leq b_r \leq N} & \prod_{k=1}^r U^{\beta_{i_k,j_k}}_N\big(b_k-a_k\big) \\
      \times & \prod_{m=2}^r \sigma^{i_m,j_m}_N q_{2(a_{m}-b_{\sfp(m)})}(0) \, .
    \end{split}
  \end{equation*}
  Next, we will give bounds for $A^{(1)}_{N}$ and $A_{N,K}^{(2)}$. Beginning with $A_{N,K}^{(2)}$, let $\rho_K:=\floor[\Big]{\frac{K}{2\binom{h}{2}}} \, $.
  Since we are summing over $r>K$, there has to be a pair $1\leq i<j \leq h$ which has recorded more than $\rho_K$ collisions.
  We recall from \eqref{ren_rep} that $U^{\beta}_N(\cdot)$ admits the renewal representation
  $$
    U_N^{\beta}(n)=\sum_{k \geq 0}(\sigma_N(\beta)\, R_N)^k\, \P\big(\tau^{\ms (N)}_{k}=n\big) \, .
  $$
  There are $\binom{h}{2}$ choices for the pair with more than $\rho_K$ collisions. We can also use the bound \eqref{Uest} to bound the contribution of the rest $\binom{h}{2}-1$ pairs in $A^{(2)}_{N,K}$.
  Therefore, we can write
  \begin{equation*}
    A^{(2)}_{N,K}\leq \frac{\binom{h}{2}}{(1-\bar{\beta}')^{\binom{h}{2}}} \sum_{k>\rho_K} (\sigma_N(\bar \beta)R_N)^k\, \P(\tau^{\ms (N)}_k \leq N) \leq \frac{\binom{h}{2}}{(1-\bar{\beta}')^{\binom{h}{2}}} \sum_{k>\rho_K} (\bar{\beta}')^k \xrightarrow{K \to \infty}0\, ,
  \end{equation*}
  uniformly in $N$, where $\bar{\beta}' \in (\bar\beta,1)$.

  Let us now proceed with estimating $A_N^{(1)}$ and showing that it has negligible contribution. 
  $A_N^{(1)}$ consists of configurations where $\mathfrak{m}\geq 2$ pairs 
 $\{i_1,j_1\}, \{i_2,j_2\},...,\{i_{\mathfrak{m}}, j_{\mathfrak{m}} \}$
   collide at a same time $n\leq N$. Referring to Figure \ref{fig:graphical-collisions}, a case of $\mathfrak{m}=2$ could 
   correspond to a situation when, for example, $a_3=a_4$. Let $n$ be the first time a multiple pair-collision takes place.
   We can choose the pairs $\{i_1,j_1\}, \{i_2,j_2\},...,\{i_{\mathfrak{m}}, j_{\mathfrak{m}} \}$, which collide at that time, in 
   $\binom{h}{2}\cdot\big(\binom{h}{2}-1\big) \cdots \big(\binom{h}{2}-\mathfrak{m}+1\big) / \mathfrak{m}! < \binom{h}{2}^{\mathfrak{m}}$ ways. 
   We can use bound \eqref{Uest} to bound the contribution to $A^{(1)}_{N}$ from the rest of the $\binom{h}{2}-\mathfrak{m}$ pairs by
  c$(1-\bar\beta')^{-{h\choose 2}+\mathfrak{m}}$ and the contribution from collisions involving pairs $\{i_1,j_1\}, \{i_2,j_2\},...,\{i_{\mathfrak{m}}, j_{\mathfrak{m}} \}$
   beyond time $n$ by $c(1-\bar\beta')^{-\mathfrak{m}}$.
     More precisely, we obtain that 
  \begin{equation*}
    \begin{split}
      A_N^{(1)}\leq
       \sum_{\mathfrak{m}=2}^{h \choose 2 } \frac{\binom{h}{2}^\mathfrak{m}}{(1-\bar{\beta}' )^{\binom{h}{2}}} \sum_{n \geq 0, \, x_1,...,x_{\mathfrak{m}} \, \in \Z^2}  
      \, \prod_{i=1}^\mathfrak{m} U^{\bar \beta}_N(n, x_i)
       \leq \sum_{\mathfrak{m}=2}^{h \choose 2 }  \frac{\binom{h}{2}^\mathfrak{m}}{(1-\bar{\beta}' )^{\binom{h}{2}}} 
      \sum_{n \geq 0}\big(U^{\bar \beta}_N\big)^{\mathfrak{m}}(n) \, .
    \end{split}
  \end{equation*}
  Schematically (making reference again to Figure \ref{fig:graphical-collisions}), we have summed out everything except the `wiggle' lines
  which span $[0,n]$ and which correspond to the pairs that simultaneously collide at time $n$ and then summed out the spatial dependence of the latters.
  We will next use Proposition 1.5 of \cite{CSZ19a}, which provides the estimate
  $$ \P(\tau_k^{\ms (N)}=n)\leq  \frac{C\, k\, q_{2n}(0)}{R_N} \leq \frac{C' \, k\, }{n \, \log N} \, , $$ where the second inequality follows by the local limit theorem.
   Therefore, from this, \eqref{ren_rep}  and \eqref{sigmaR} we get that
  $$\big(U^{\bar \beta }_N\big)^\mathfrak{m}(n) =\Big(\sum_{k \, \geq 0} (\sigma_N(\bar\beta)R_N)^{k} \, \P(\tau_k^{\ms (N)}=n) \Big)^{\mathfrak{m}}
  \leq \frac{(C')^{\mathfrak{m}}}{n^{\mathfrak{m}} \, (\log N)^{\mathfrak{m}} } \Big(\sum_{k \geq 0} k \cdot (\bar{\beta}')^k \Big)^{\mathfrak{m}} \, .$$
  for some $\bar{\beta}' \in (\bar\beta,1)$.
  Since $\bar{\beta}'<1$ we have that $\sum_{k \geq 0} k \cdot(\bar{\beta}')^k  < \infty$ and we deduce that there exists a constant $C=C(\bar{\beta}')$ such that
  $\big(U^{\bar \beta}_N\big)^{\mathfrak{m}}(n) \leq \frac{C^{\mathfrak{m}}}{n^{\mathfrak{m}}\,(\log N)^{\mathfrak{m}}}$. 
  Since $\sum_{n \geq 1} \frac{1}{n^{\mathfrak{m}}}<\infty$, for ${\mathfrak{m}}\geq 2$, there exists a (different) constant $C=C(\bar{\beta}')\in (0,\infty)$ such that
  $$ \sum_{n \geq 0} \big(U^{\bar \beta}_N\big)^{\mathfrak{m}}(n)\leq \frac{C}{(\log N)^{\mathfrak{m}}} \xrightarrow{N \to \infty} 0\, .$$
  \vskip 2mm
  The two bounds above, in combination with \eqref{rewired_norestrictions} and \eqref{laplace_parts}, allow us to write:
  \begin{equation*}
    \begin{split}
      \sum_{r =0}^K H^{\mathsf{(rew)}}_{r,N,R,M} & \geq  (1-e^{-cR^2})^{2Kh}\,(1-e^{-\kappa\,R})^K \, (1-e^{-\kappa\,R^2\,C^2_{K,M}})^K \\
      & \qquad \qquad \times  \Bigg(\prod_{1\leq i<j \leq h}\E\Big[e^{\frac{\pi \beta_{i,j}}{\log N}\,  \sfL^{(i,j)}_N}\Big]- K \cdot \widetilde{C}_K\cdot \epsilon_{N,M}- o_N(1)-o_K(1) \Bigg) \, ,
    \end{split}
  \end{equation*}
  which together with upper bound \eqref{rewired_final_upper_bound} entail that
  \begin{equation*}
    \lim_{K \to \infty} \lim_{M \to \infty} \lim_{R \to \infty} \lim_{N \to \infty} \sum_{r=0}^{K} H^{\mathsf{(rew)}}_{r,N,R,M}=\prod_{1\leq i<j \leq h} \frac{1}{1-\beta_{i,j}} \, .
  \end{equation*}
\end{proof}

We are now ready to put all pieces together and prove the main result of the paper, Theorem \ref{main_result}.
\begin{proof}[Proof of Theorem \ref{main_result}]
The joint convergence statement \eqref{thmeq:joint} will follow from the convergence of the joint Laplace transform 
\eqref{thmeq:laplace2} for $|\beta_{i,j}|<1$ and general results on the relation between convergence of Laplace transforms and distributional convergence.
Let us sketch this argument: Since $\big(\tfrac{\pi}{\log N} \sfL_N^{(i,j)} \big)_{1\leq i<j\leq N}$ are non-negative random variables, it suffices by the Cramer-Wold device (see \cite{K97}, Corollary 4.5)
 to establish the distributional convergence of any nonnegative linear combinations, i.e. 
$\sfL_N^{\boldsymbol{\beta}}:=\sum_{i<j} \tfrac{\beta_{i,j} \pi}{\log N} \sfL_N^{(i,j)}$ with $\beta_{i,j}\geq 0$. 
The fact that $\sfL_N^{\boldsymbol{\beta}}$ has 
exponential moments both positive (as follows from our estimates) and negative (as is clear from the nonnegativity of  
$\sfL_N^{\boldsymbol{\beta}}$) implies that the sequence $\sfL_N^{\boldsymbol{\beta}}$  is tight. 
Tightness together with exponential moments imply that subsequential limits of the Laplace transforms exist,
with the convergence being uniform in the vicinity of $0$ in the complex plane. 
This leads to the analyticity of  the limiting Laplace transforms in the  neighbourhood of $0$.
We will establish that the Laplace
transform \eqref{thmeq:laplace2} with positive parameters converge to that of the corresponding linear combination of independent exponential distributions with parameter $1$, which is also analytic in the vicinity of zero (see right-hand side of
 \eqref{thmeq:laplace2}). Since the limiting Laplace transforms agree on an interval with non-empty interior
 (in this case determined by the conditions $\beta_{i,j}\in [0,1)$), by analyticity they have to agree in the 
whole domain of analyticity, which includes an open disc containing $0$.

Therefore, it only remains to establish the convergence of the joint Laplace transform 
\eqref{thmeq:laplace2} for $\beta_{i,j}\in [0,1)$. Let us do so relying on the results we have proven so far.

  Let $\epsilon >0$. There exists large $K=K_\epsilon \in \N$ such that uniformly in $N \in \N$
  \begin{equation} \label{replica_truncation}
    \Big |M^{\boldsymbol{\beta}}_{N,h}- \sum_{r=0}^K H_{r,N}\Big|\leq \epsilon \, ,
  \end{equation}
  by Proposition \ref{finite_replicas}. We have
  \begin{equation*} 
    \begin{split}
      \Big| \sum_{r=0}^K H_{r,N}- \sum_{r=0}^K H^{\mathsf{(rew)}}_{r,N,R,M}\Big | \leq  \bigg(\sum_{r=0}^K (H^{\mathsf{(superdiff)}}_{r,N,R}+H^{\mathsf{(multi)}}_{r,N} )\bigg)+& \Big|\sum_{r=0}^K (H^{\mathsf{(diff)}}_{r,N,R}-H^{\mathsf{(main)}}_{r,N,R,M})\Big|\\
      &+ \Big| \sum_{r=0}^K (H^{\mathsf{(main)}}_{r,N,R,M}-H^{\mathsf{(rew)}}_{r,N,R,M})\Big| \, .
    \end{split}
  \end{equation*}
  By Propositions \ref{fixed_deg_bounds}, \ref{dif_proposition} we have that
  \begin{equation*}
    \lim_{R \to \infty}\lim_{N \to \infty} \bigg(\sum_{r=0}^K (H^{\mathsf{(superdiff)}}_{r,N,R}+H^{\mathsf{(multi)}}_{r,N} )\bigg) \leq \lim_{R \to \infty} \sup_{N \in \N}\sum_{r=0}^K H^{\mathsf{(superdiff)}}_{r,N,R}+\lim_{N \to \infty} \sum_{r=0}^K H^{\mathsf{(multi)}}_{r,N}=0 \, .
  \end{equation*}
  Moreover, by Proposition \ref{M_proposition}
  we have that
  \begin{equation*}
    \lim_{R \to \infty}\lim_{M \to \infty}\lim_{N \to \infty} \Big |  \sum_{r=0}^K H^{\mathsf{(diff)}}_{r,N,R}- \sum_{r=0}^K H^{\mathsf{(main)}}_{r,N,R,M}\Big|=0 \, .
  \end{equation*}
  Last, by Proposition \ref{introduction_of_rewired} we have that
  \begin{equation*}
    \lim_{R \to \infty} \lim_{M \to \infty} \lim_{N \to \infty} \Big |\sum_{r=0}^K H^{\mathsf{(main)}}_{r,N,R,M}- \sum_{r=0}^K H^{\mathsf{(rew)}}_{r,N,R,M}\Big|=0 \, ,
  \end{equation*}
  therefore
  \begin{equation} \label{original_rewired}
    \lim_{R \to \infty} \lim_{M \to \infty} \lim_{N \to \infty} \Big| \sum_{r=0}^K H_{r,N}- \sum_{r=0}^K H^{\mathsf{(rew)}}_{r,N,R,M}\Big |=0 \, .
  \end{equation}
  By Proposition \ref{rewired_limit} we have that
  \begin{equation} \label{rew_conv}
    \lim_{K \to \infty} \lim_{R \to \infty}\lim_{M \to \infty}\lim_{N \to \infty} \sum_{r=0}^K H^{\mathsf{(rew)}}_{r,N,R,M}=\prod_{1\leq i <j \leq h} \frac{1}{1-\beta_{i,j}} \, ,
  \end{equation}
  Therefore, by \eqref{replica_truncation}, \eqref{original_rewired} and \eqref{rew_conv} we obtain that
  \begin{equation*}
    \lim_{N \to \infty} M^{\boldsymbol{\beta}}_{N,h}=\prod_{1\leq i <j \leq h} \frac{1}{1-\beta_{i,j}}\, .
  \end{equation*}
\end{proof}

\noindent \textbf{Acknowledgements.}
We thank Francesco Caravenna and Rongfeng Sun for useful comments on the draft.
D. L. acknowledges financial support from EPRSC through grant EP/HO23364/1 as part of the MASDOC DTC at the University of Warwick.
NZ was supported by EPSRC through grant EP/R024456/1.

\end{document}